\newenvironment{claimproof}[1][Proof of Claim]
{ \begin{proof}[#1]}
{\end{proof}  }
\newcommand{\p}[1]{\mathbb{P}\left[{#1}\right]}
\newcommand{\e}[1]{\mathbb{E}\left[{#1}\right]}
\newcommand{\node}[1]{{\mathbf{n}}^{#1}}
\newcommand{\dist}{\operatorname{dist}}
\newcommand{\diam}{\operatorname{diam}}
\newcommand{\ah}{\operatorname{ah}}
\newcommand{\cond} {{\,\left\vert\vphantom{\frac{1}{1}}\right.\,}}
\numberwithin{equation}{section}
\theoremstyle{plain}
\newtheorem{theorem}{Theorem}[section]
\newtheorem{lemma}[theorem]{Lemma}
\newtheorem{corollary}[theorem]{Corollary}
\theoremstyle{definition}
\theoremstyle{remark}
\newtheorem*{remark}{Remark}
\newtheorem*{claim}{Claim}
\title{On the Longest Paths and the Diameter \\ in Random Apollonian Networks}
\author{
{Ehsan Ebrahimzadeh\thanks{Department of Electrical and Computer Engineering, University of Waterloo, Waterloo ON, Canada. Email address: \texttt{eebrahim@uwaterloo.ca}.}}
\and Linda Farczadi\thanks{Department of Combinatorics and Optimization, University of Waterloo, Waterloo ON, Canada. Email address: \texttt{lindafarczadi@gmail.com}.}
\and Pu Gao\thanks{Department of Computer Science, University of Toronto, Toronto ON, Canada. Email address: \texttt{jane.p.gao@gmail.com}.}
\and Abbas Mehrabian\thanks{Department of Combinatorics and Optimization, University of Waterloo, Waterloo ON, Canada. Email address: \texttt{amehrabi@uwaterloo.ca}.}
\and Cristiane M.~Sato\thanks{Department of Combinatorics and Optimization, University of Waterloo, Waterloo ON, Canada. Email address: \texttt{cmsato@gmail.com}.}
\and Nick Wormald\thanks{Department of Combinatorics and Optimization, University of Waterloo, Waterloo ON, Canada, and School of Mathematical Sciences, Monash University, Clayton, VIC, Australia. This author acknowledges  support from the Canada Research Chairs program and NSERC, and the ARC Australian Laureate Fellowship scheme. Email address: \texttt{nick.wormald@monash.edu}.}
\and Jonathan Zung\thanks{Department of Mathematics, University of Toronto, Toronto ON, Canada. Email address: \texttt{jonathanzung@gmail.com}.}
}
\begin{document}
\date{}
\maketitle

\begin{abstract}
 We consider the following iterative construction of a random planar triangulation.  Start with a triangle embedded in the plane.
In each step, choose a bounded face uniformly at random,
add a vertex inside that face and join it to the vertices of the face.
After $n-3$ steps, we obtain a random triangulated plane graph with $n$ vertices,
which is called a {Random Apollonian Network (RAN)}.
We show that asymptotically almost surely (a.a.s.) every path in a RAN has length $o(n)$,
refuting a conjecture of Frieze and Tsourakakis.
We also show that a RAN always has a path of length $(2n-5)^{\log 2 / \log 3}$,
and that the expected length of its longest path is $\Omega\left(n^{0.88}\right)$.
Finally, we prove that a.a.s.\ the diameter of a RAN is asymptotic to $c \log n$,
where $c\approx 1.668$ is the solution of an explicit equation.
\end{abstract}

\section{Introduction}

Due to the increase of interest in social networks, the Web graph, biological networks etc.,
in recent years a large amount of research has focused on modelling real world networks
(see, e.g.,  Bonato~\cite {web_survey} or Chung and Lu~\cite{complexgraphs}).
Despite the outstanding amount of work on models generating graphs with power law degree sequences,
a considerably smaller amount of work has focused on generative models for planar graphs.
In this paper we study a popular random graph model for generating planar graphs with power law properties, which is defined as follows.
Start with a triangle embedded in the plane.
In each step, choose a bounded face uniformly at random,
add a vertex inside that face and join it to the vertices on the face.
We call this operation \emph{subdividing} the face.
In this paper, we use the term ``face'' to refer to a ``bounded face,''
unless specified otherwise.
After $n-3$ steps, we have a (random) triangulated plane graph with $n$ vertices and $2n-5$ faces.
This is called a \emph{Random Apollonian Network (RAN)} and we study its asymptotic properties,
as its number of vertices goes to infinity.
The number of edges equals $3n-6$, and hence a RAN is a maximal plane graph.

The term ``{apollonian network}''  refers to a deterministic version of this process,
formed by subdividing all triangles the same number of times,
which was first studied in~\cite{ANs_1,ANs_2}.
Andrade~et~al.~\cite{ANs_1} studied power laws in the degree sequences of these networks.
Random apollonian networks were defined in Zhou~et~al.~\cite{define_RANs} (see Zhang et al.~\cite{high_RANs} for a generalization to higher dimensions),
where it was proved that the diameter of a RAN is asymptotically bounded above by a constant times the logarithm of the number of vertices.
It was shown in~\cite{define_RANs,RANs_powerlaw} that RANs exhibit a power law degree distribution.
The average distance between two vertices in a typical RAN was shown to be logarithmic by Albenque and Marckert~\cite{RANs_average_distance}.
The degree distribution, $k$ largest degrees and $k$ largest eigenvalues (for fixed $k$) and the diameter
were studied in Frieze and Tsourakakis~\cite{first}. We continue this line of research by
studying the asymptotic properties of the longest (simple) paths in RANs
and giving sharp estimates for the diameter of a typical RAN.

Before stating our main results, we need a few definitions.
In this paper $n$ (respectively, $m$) always denotes the number of vertices (respectively, faces) of the RAN.
All logarithms are in the natural base.
We say an event $A$ happens \emph{asymptotically almost surely (a.a.s.)}
if $\p{A}$ approaches 1 as $n$ goes to infinity.
{For two functions $f(n)$ and $g(n)$ we write $f \sim g$ if
$\lim_{n\rightarrow \infty} { \frac{f(n)}{g(n)} } = 1 \: .$
For a random variable $X = X(n)$ and a function $f(n)$, we say $X$ is \emph{a.a.s.\ asymptotic to} $f(n)$
(and write \emph{a.a.s.\ $X\sim f(n)$}) if for every fixed $\varepsilon>0$,
$$\lim_{n\rightarrow \infty} \p{ f(n) (1-  \varepsilon) \leq {X} \leq f(n) (1+ \varepsilon)} = 1 \:,$$
and we say  \emph{a.a.s.\ $X=o\big(f(n)\big)$}  if
for every fixed $\varepsilon>0$,
$\lim_{n\rightarrow \infty} \p{  {X} \leq \varepsilon f(n) } = 1 \: .$

The authors  of~\cite{first} conjecture in their concluding remarks  that a.a.s.\ a RAN has a path of length $\Omega(n)$.
We refute this conjecture by showing the following theorem.
Let $\mathcal{L}_m$ be a random variable denoting the number of vertices in a longest path
in a RAN with $m$ faces.

\begin{theorem}
\label{thm:longest_upper}
A.a.s.\ we have $\mathcal{L}_m = o(m)$.
\end{theorem}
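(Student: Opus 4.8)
The plan is to prove the equivalent statement that for every fixed $\varepsilon>0$, a.a.s.\ $\mathcal{L}_m\le\varepsilon m$, exploiting the self-similar structure of a RAN. Fix a large constant $k$ and expose the \emph{core} $\calT_k$, the sub-RAN spanned by the first $k$ vertices; it has $\Theta(k)$ vertices and $2k-5$ faces, and is a fixed random triangulation. Conditioned on $\calT_k$ and on the number of the remaining $n-k$ insertions landing in each core face, the RAN is obtained by independently growing a RAN inside each face $F$ of $\calT_k$. A standard Pólya-urn computation (each insertion replaces one of the current faces by three, so the counts of faces descended from the $2k-5$ core faces evolve as a $(2k-5)$-colour urn started at $(1,\dots,1)$ with increment $2$) shows that, writing $m_F$ for the number of faces grown inside $F$, the vector $(m_F/m)_F$ converges in distribution to $\mathrm{Dirichlet}(1/2,\dots,1/2)$ as $n\to\infty$ with $k$ fixed; in particular each $m_F$ is a.a.s.\ concentrated around $m$ times its limiting proportion, and by taking $k$ large the largest of these proportions can be made tiny.

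The second step is a purely topological lemma: for any simple path $P$ in the RAN and any core face $F$, every vertex lying strictly inside $F$ has all its neighbours inside $F$ or among the three corners of $F$, so the interior-of-$F$ vertices of $P$ split $P$ into at most a bounded number of maximal sub-paths (``excursions''), each entered and left through a corner of $F$. Since the three corners of $F$ are pairwise adjacent, these excursions can be spliced together, using at most two corner–corner edges, into a single simple path of the sub-RAN inside $F$. Counting the $\Theta(k)$ core vertices separately and summing over the faces of $\calT_k$, this yields the recursion
\[
\mathcal{L}_m \;\le\; O(k) \;+\; \sum_{F}\mathcal{L}_{m_F}, \qquad \sum_F m_F = m .
\]

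The main obstacle is that this recursion, on its own, is scale-neutral: since $\sum_F m_F=m$, iterating it (re-exposing cores inside each face, $k\to\infty$ slowly) only ever reproduces a bound of the form $\mathcal{L}_m\le(1-o(1))m$, never $o(m)$ — indeed any crude accounting by ``number of faces the optimal path actually touches'' times ``maximum current face size'' loses a logarithmic factor per level. What is needed, and where the real work lies, is a sharper leaf-level input showing that a positive fraction of the total mass sits in sub-RANs that a single path can exploit only up to a constant fraction of their vertices — for instance that some constant-size RAN fails to be traceable with positive probability (so that, across the many \emph{independent} sub-RANs produced by the decomposition, a positive proportion of them impose a genuine loss), or equivalently that a.a.s.\ one can delete a sublinear vertex set to break the RAN into $\omega(1)$ components of bounded size. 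The hard part will be to arrange the recursion so that such a constant per-scale loss is not washed out by the $O(k)$-type error terms and instead compounds over the $\omega(1)$ levels, forcing $\mathcal{L}_m/m\to0$; once that amplification is in place, Theorem~\ref{thm:longest_upper} follows.
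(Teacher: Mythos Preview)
Your decomposition and the P\'olya--urn balancedness are sound, and your splicing lemma is correct: the interior-of-$F$ portion of any path can indeed be reassembled into a single path of the sub-RAN in $F$, so the recursion
\[
\mathcal{L}_m \le O(k) + \sum_F \mathcal{L}_{m_F}
\]
holds. But as you yourself observe, this inequality is scale-neutral, and the fixes you float do not rescue it. A constant-size RAN being non-traceable with positive probability buys only an additive loss of $O(1)$ per ``bad'' face, hence $O(k)$ in total; feeding that back into the recursion and iterating still yields only $\mathcal{L}_m \le m - O(k\log m)$ after polylog many rounds, not $o(m)$. The separator formulation --- delete $o(n)$ vertices to shatter the RAN into $\omega(1)$ bounded components --- would suffice, but proving it is no easier than the theorem itself. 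In short, the proposal is an honest outline that has not yet located the mechanism producing a \emph{multiplicative} loss per scale.

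The missing combinatorial observation is this. Do not decompose into the faces of a depth-$k$ core; instead look at the standard $2$-subdivision of any triangle $\triangle$, i.e.\ the nine grandchild triangles $\triangle_1,\dots,\triangle_9$. These nine triangles have only seven boundary vertices in total, and a simple path uses each boundary vertex at most once, so the boundary vertices cut the path into at most eight segments, each confined to a single $\triangle_i$. Hence \emph{every} path misses the interior of at least one of the nine $\triangle_i$ entirely. This is the fractional loss your recursion lacks: it is not that the longest path of the sub-RAN in $\triangle_i$ is short, but that the given path $P$ skips one whole $\triangle_i$. Coupling this with the balancedness you already have (Corollary~\ref{cor:fairness} gives $\min_i |I(\triangle_i)| \ge \varepsilon\,|I(\triangle)|$ with high probability when $\triangle$ contains many faces), each ``fair'' $2$-subdivision forces $P$ to forfeit at least an $\varepsilon$-fraction of the vertices inside $\triangle$. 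The paper then organises the tree into $\omega(1)$ layers so that every root-to-leaf route meets a fair node in each layer; compounding the $(1-\varepsilon/2)$ factors over the layers yields $|V(P)| = o(m)$. Your core-exposure framework is not wrong, but without the nine-grandchildren/seven-vertex pigeonhole it cannot get started.
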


Recall that a RAN on $n$ vertices has $2n-5$ faces, so Theorem~\ref{thm:longest_upper}
implies that a.a.s.\ a RAN does not have a path of length $\Omega(n)$.

We also prove the following lower bounds for the length of a longest path deterministically, and its expected value in a RAN.

\begin{theorem}
\label{thm:longest_lower}
For every positive integer $m$, the following statements are true.
\begin{itemize}
\item[(a)]
$ \mathcal{L}_m \geq m^{\log 2 / \log 3} + 2 \:.$
\item[(b)]
$\e{\mathcal{L}_m} = \Omega\left ( m^{0.88} \right) \:.$
\end{itemize}
\end{theorem}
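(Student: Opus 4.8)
The plan is to exploit the recursive structure of a RAN. Let $R$ be a RAN with outer triangle $uvw$ and $m\ge 3$ faces, let $x$ be the first vertex added (the centre of $uvw$), and let $R_1,R_2,R_3$ be the sub-RANs lying inside the triangles $uvx$, $vwx$, $wux$; they have $t_1,t_2,t_3\ge 1$ faces with $t_1+t_2+t_3=m$, and conditioned on $(t_1,t_2,t_3)$ they are independent uniformly random RANs. For a RAN $S$ with corner triple $\{p,q,r\}$, let $\lambda(S)$ be the smallest, over the three pairs of corners, of the number of vertices of a longest path joining that pair and avoiding the third corner, and let $\ell(t)$ be the minimum of $\lambda(S)$ over all RANs $S$ with exactly $t$ faces. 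Everything rests on a \emph{routing lemma}: for any two distinct sub-RANs $R',R''$ and any pair of corners of $R$, say $u$ and $v$, one joins $u$ to $v$ while avoiding $w$ by concatenating, at the hub $x$, a path from $u$ to $x$ inside one of $R',R''$ with a path from $x$ to $v$ inside the other, each routed to avoid the appropriate third corner; since the interiors of distinct sub-RANs are disjoint and the two pieces meet only at $x$, this is a simple path avoiding $w$ with at least $\lambda(R')+\lambda(R'')-1$ vertices. Taking $R',R''$ to be the two sub-RANs with the most faces, and writing $t_1\ge t_2\ge t_3$ for the face-counts in decreasing order, we get $\lambda(R)\ge\ell(t_1)+\ell(t_2)-1$, hence $\ell(m)\ge\ell(t_1)+\ell(t_2)-1$.

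For part~(a) I would prove $\ell(m)\ge m^{\log 2/\log 3}+1$ by induction on $m$, the base case being $\ell(1)=2$ (in the bare triangle an avoiding path is a single edge). The inductive step needs only the elementary inequality $(a+b+c)^{\log 2/\log 3}\le a^{\log 2/\log 3}+b^{\log 2/\log 3}$ for $a\ge b\ge c\ge 0$: applied to $(t_1,t_2,t_3)$ it turns $\ell(m)\ge\ell(t_1)+\ell(t_2)-1$ into $\ell(m)\ge t_1^{\log2/\log3}+t_2^{\log2/\log3}+1\ge m^{\log2/\log3}+1$. That inequality follows from the identity $3^{\log 2/\log 3}=2$ by checking that $r\mapsto r^{\log2/\log3}+1-(r+2)^{\log2/\log3}$ vanishes at $r=1$ and is increasing for $r\ge1$. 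To sharpen ``$+1$'' to the claimed ``$+2$'', note that $\mathcal L_m$ is also at least the length of one path threading through all three sub-RANs in succession — from $v$ to $x$ in $R_1$ avoiding $u$, then $x$ to $w$ in $R_2$ avoiding $v$, then $w$ to $u$ in $R_3$ avoiding $x$ — which has at least $\ell(t_1)+\ell(t_2)+\ell(t_3)-2$ vertices; since $t_3\ge1$ and $t_1^{\log2/\log3}+t_2^{\log2/\log3}\ge m^{\log2/\log3}$ this is at least $m^{\log 2/\log 3}+2$ (and $m=1$ is handled directly by the path $u\,w\,v$). The worst case throughout is a perfectly balanced split, which is exactly where the power inequality is tight — and also why the ``bushy'' RANs underlying Theorem~\ref{thm:longest_upper} are what obstructs long paths.

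For part~(b) I would run the same recursion in expectation. Writing $\bar\ell(t)=\mathbb{E}[\lambda(S_t)]$ for $S_t$ a uniformly random RAN with $t$ faces, conditional independence of the sub-RANs and the routing lemma (again routed through the two largest sub-RANs) give $\bar\ell(m)\ge\mathbb{E}[\bar\ell(t_1)+\bar\ell(t_2)]-1$, the expectation being over the random decreasing split $(t_1,t_2,t_3)$ of $m$. This split obeys a P\'olya-type urn — after the first subdivision, each further subdivision falls in $R_i$ with probability proportional to its current number of faces, which then grows by $2$ — so $(t_1/m,t_2/m,t_3/m)$ converges in distribution to the decreasing rearrangement $(Z_{(1)},Z_{(2)},Z_{(3)})$ of a $\mathrm{Dirichlet}(1/2,1/2,1/2)$ vector. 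With the ansatz $\bar\ell(t)\ge c_1 t^{\beta}-c_2$, and since $(x,y)\mapsto x^{\beta}+y^{\beta}$ is bounded and continuous on the simplex, the induction closes for all large $m$ — the finitely many small cases being absorbed into $c_1,c_2$ — provided
\[
\rho(\beta)\ :=\ \mathbb{E}\bigl[Z_{(1)}^{\beta}+Z_{(2)}^{\beta}\bigr]\ =\ \frac{3}{2\beta+1}-\mathbb{E}\bigl[Z_{(3)}^{\beta}\bigr]\ >\ 1 ,
\]
where the middle equality uses $Z_{(1)}^{\beta}+Z_{(2)}^{\beta}+Z_{(3)}^{\beta}=Z_1^{\beta}+Z_2^{\beta}+Z_3^{\beta}$ and $\mathbb{E}[Z_i^{\beta}]=1/(2\beta+1)$ for the $\mathrm{Beta}(1/2,1)$ marginal. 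Since $\rho$ is continuous and strictly decreasing, with $\rho(\log 2/\log 3)>1$ (by the inequality from part~(a) applied to $(Z_{(1)},Z_{(2)},Z_{(3)})$) and $\rho(1)=1-\mathbb{E}[Z_{(3)}]<1$, it remains to evaluate the one-dimensional integral $\mathbb{E}[Z_{(3)}^{\beta}]$ against the $\mathrm{Dirichlet}(1/2,1/2,1/2)$ density and verify $\rho(0.88)>1$, giving $\mathbb{E}[\mathcal L_m]\ge\bar\ell(m)=\Omega(m^{0.88})$.

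The main obstacle in part~(a) is the bookkeeping inside the routing lemma — verifying that the pieces chosen in the different sub-RANs are genuinely vertex-disjoint apart from the hub and avoid precisely the corners they should — together with calibrating the constants so that the exact bound $m^{\log 2/\log 3}+2$, without leftover error terms, emerges. In part~(b) the obstacle is that the sub-RAN sizes are \emph{not} concentrated, so one cannot simply feed expected sizes into the concave function $\bar\ell$; the power ansatz is precisely what collapses the continuum of splits into the single scalar condition $\rho(\beta)>1$, after which the remaining work is the careful evaluation of $\mathbb{E}[Z_{(3)}^{\beta}]$ certifying $\beta=0.88$.
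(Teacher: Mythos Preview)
Your proposal is correct and takes essentially the same approach as the paper. The only cosmetic differences are that the paper counts edges rather than vertices in its auxiliary quantity (so its recursion has no $-1$, and the ``$+2$'' comes simply from appending the avoided corner $\nu_{\pi(3)}$ to an avoiding path rather than threading through all three sub-RANs), and it expresses the limiting integral via sequential $\mathrm{Beta}(1/2,1)$ and $\mathrm{Beta}(1/2,1/2)$ conditionals rather than the ordered $\mathrm{Dirichlet}(1/2,1/2,1/2)$; your reformulation $\rho(\beta)=3/(2\beta+1)-\mathbb{E}[Z_{(3)}^{\beta}]$ is an equivalent and tidy restatement of the paper's condition.
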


The proofs of Theorems~\ref{thm:longest_upper}~and~\ref{thm:longest_lower}
are built on two novel graph theoretic observations, valid for all subgraphs of apollonian networks.

We also study the diameter of RANs.
In~\cite{first} it was shown that the diameter of a RAN is a.a.s.\ at most
$\eta_2 \log n$, where $\eta_2 \approx 7.081$ is the unique solution greater than 1 of $\exp\left ( {1}/{x} \right)  =  {3e}/{x}$.
(Our statement here corrects a minor error in~\cite{first}, propagated from  Broutin and Devroye~\cite{treeheight}, which stated that $\eta_2$ is the unique solution less than 1.)
In~\cite{RANs_average_distance} it was shown that a.a.s.\ the
distance between two randomly chosen vertices of a RAN (which naturally gives a lower bound on the diameter)
is asymptotic to $\eta_1 \log n$, where $\eta_1 = 6/11 \approx 0.545$.
In this paper, we provide the asymptotic value for the diameter of a typical RAN.

\begin{theorem}
\label{thm:diameter}
A.a.s.\ the diameter of a RAN on $n$ vertices is asymptotic to $c \log n$, with
$c=(1-\hat x^{-1})/\log h(\hat x)\approx 1.668$, where
$$
h(x)=\frac{12x^3}{1-2x}- \frac{6x^3}{1-x} \:,
$$
and $\hat x\approx 0.163$ is the unique solution in the interval $(0.1,0.2)$ to
$$
x(x-1)h'(x)=h(x)\log h(x)\:.
$$
\end{theorem}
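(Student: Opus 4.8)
The plan is to analyze the RAN construction process via its natural correspondence with a random ternary tree, and then apply sharp results on the height of such trees. When a face is subdivided, it is replaced by three new faces; recording this as a tree, each internal node of the ternary tree corresponds to a face that was eventually subdivided, and each leaf corresponds to a currently-present face. After $n-3$ steps the tree has $n-2$ internal nodes (one per subdivision, plus the root) and $2n-5$ leaves. Crucially, the graph distance in the RAN between two vertices is, up to a bounded additive error, governed by the depths in this tree of the faces containing those vertices: a path in the RAN must ``traverse'' nested faces, and the number of faces one must pass through is essentially the tree-distance between the corresponding leaves. So the diameter of the RAN is asymptotically twice the height of the random ternary tree built by this process --- more precisely, it will be $\sim 2H / \log 3 \cdot (\text{appropriate constant})$, but the precise constant requires care because the ``three children'' of a node in the RAN are not symmetric: when a face $uvw$ is subdivided by a new vertex $z$, the three sub-faces $uvz$, $uwz$, $vwz$ share the new vertex $z$, and distances within the original face collapse. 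This is exactly the source of the somewhat unusual function $h$.

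First I would set up the tree encoding precisely and prove the deterministic two-sided comparison between the RAN diameter and a suitable weighted depth in the tree --- call it the \emph{ancestral height} $\ah$ of the process, counting something like the maximum over pairs of leaves of a weighted path length, where the weight accounts for the fact that moving between sibling sub-faces costs less than moving to a deeper generation. Second, I would recognize the resulting random recursive tree as a variant of a random ternary search tree (equivalently, the process is a ``random increasing ternary tree'' where at each step a uniformly random leaf is expanded into three leaves). Its height is known to concentrate: by the general theory of Broutin and Devroye~\cite{treeheight} on weighted height of random trees, the height is a.a.s.\ asymptotic to $\gamma \log n$ where $\gamma$ is determined by a large-deviation / variational problem involving the moment generating function of the edge weights along a random root-to-leaf path. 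Third, I would carry out that variational computation explicitly for our weight distribution. The relevant large-deviation rate is encoded by the function $h(x)$: the term $12x^3/(1-2x)$ comes from summing the generating function of the branching (three children, each reached with the relevant probability) and the correction $-6x^3/(1-x)$ removes the overcounting of the sibling-pairs that are ``close.'' Optimizing --- i.e.\ finding the exponential growth rate at which the expected number of root-to-leaf paths of a given weighted length transitions from growing to shrinking --- yields the stationarity condition $x(x-1)h'(x) = h(x)\log h(x)$, whose relevant root is $\hat x \approx 0.163$, and then $c = (1 - \hat x^{-1})/\log h(\hat x)$.

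The upper bound (a.a.s.\ diameter $\le (c+\varepsilon)\log n$) follows from a first-moment argument: the expected number of pairs of faces at weighted tree-distance exceeding $(c+\varepsilon)\log n$ tends to $0$, using the explicit generating function $h$ to count and weight root-to-leaf paths and the fact that leaves are produced by a uniformly random choice at each step. The lower bound (a.a.s.\ diameter $\ge (c-\varepsilon)\log n$) is the harder direction and the main obstacle: a plain second-moment computation on pairs of deep leaves is typically not enough because the variance is too large, so I expect to need the standard embedded-branching-process / subtree-decomposition trick --- split the process after $\Theta(n^{\delta})$ steps into many subtrees, argue that a.a.s.\ one of them is ``balanced'' and independently develops a long weighted path, and iterate (a boosting argument), exactly as in the height analysis of random search trees. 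One must also verify that the deterministic graph-theoretic reduction is tight enough that this tree height genuinely realizes a path of that length in the RAN (not merely an upper bound on distances), which requires exhibiting an actual shortest-path route through the nested faces; I would handle this by tracking, along the extremal root-to-leaf tree path, an explicit sequence of vertices in the RAN and bounding the induced graph distance below by the weighted tree-length up to $O(1)$. Checking that $\hat x$ is indeed the unique relevant root in $(0.1,0.2)$ and that $h$ is well-defined and positive there (so $\log h(\hat x)$ makes sense and $c>0$) is a routine but necessary calculus verification.
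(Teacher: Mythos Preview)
Your proposal has a genuine gap at the very first step: the deterministic reduction from RAN distances to tree depths is not what you describe, and without the correct reduction the function $h$ cannot be derived. It is \emph{not} true that the graph distance between two vertices is, up to $O(1)$, the tree-distance between the corresponding leaves (weighted or otherwise in the way you suggest). A counterexample: repeatedly subdivide a face that keeps one of the original boundary vertices $\nu_1$ on it; the leaf depth goes to infinity while every new vertex remains at graph distance $1$ from $\nu_1$. The ``three children are asymmetric'' observation is in the right spirit, but your proposed fix --- edge weights reflecting cheaper sibling moves --- does not capture the phenomenon.

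What the paper actually does is classify each triangle into one of three \emph{types} according to the multiset $\{\tau(x),\tau(y),\tau(z)\}$ of boundary-distances of its corners; the crucial fact is that $\tau$ of a newly inserted vertex increases (relative to the minimum over the face) exactly when the face is of type~1, and the type of each child face is determined by the type of its parent via explicit rules. Hence the radius of the RAN equals (up to $\pm 1$) the \emph{auxiliary height}: the maximum, over root-to-leaf paths in the $\triangle$-tree, of the number of type-1 nodes on the path. The function $h$ is then the generating function $h(x)=\sum_{i\ge 3}\alpha_i x^i$, where $\alpha_i=3\cdot 2^{i-1}-6$ counts the type-1 nodes at depth $i$ below a type-1 node with no intermediate type-1 nodes; this is where $12x^3/(1-2x)-6x^3/(1-x)$ comes from, not from any sibling overcounting correction. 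A second gap you should anticipate: because a type-1 node has \emph{infinitely many} type-1 descendants in the infinite tree (one cannot bound the branching factor), Broutin--Devroye does not apply directly; the paper handles this with a sandwich argument, truncating the inter-type-1 generations at level $k$ from below and padding from above via a careful coupling, and then letting $k\to\infty$.
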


The proof of Theorem~\ref{thm:diameter} consists of a nontrivial reduction
of the problem of estimating the diameter to the problem of estimating the height of a certain skewed random tree,
which can be done by applying a result of~\cite{treeheight}.

We start with some preliminaries in Section~\ref{sec:preliminaries},
and prove Theorems~\ref{thm:longest_upper},~\ref{thm:longest_lower}, and~\ref{thm:diameter}
in Sections~\ref{sec:longest_upper},~\ref{sec:longest_lower}, and~\ref{sec:diameter}, respectively.

\section{Preliminaries}
\label{sec:preliminaries}
The following result is due to Eggenberger and P\'{o}lya~\cite{polya} (see, e.g., Mahmoud~\cite[Theorem~5.1.2]{urns}).
\begin{theorem}
\label{thm:beta}
Start with $w$ white balls and $b$ black balls in an urn.
In each step, pick a ball uniformly at random from the urn, look at its colour, and return it to the urn; also add $s$ balls of the same colour to the urn.
Let $w_n$ and $t_n$ be the number of white balls and the total number of balls in the urn after $n$ draws.
Then, for any $\alpha\in[0,1]$ we have
$$\lim_{n\rightarrow \infty} \p{\frac{w_n}{t_n} < \alpha} = \frac{\Gamma((w+b)/s)}{\Gamma(w/s)\Gamma(b/s)} \int_{0}^{\alpha} x ^{\frac{w}{s} - 1} (1-x)^{\frac{b}{s}-1} \: \mathrm{d}x \:.$$
\end{theorem}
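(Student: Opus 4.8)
This is a classical result, so the plan is to give a clean proof via the method of moments, which is the most self-contained route. First I would compute the $k$-th factorial-type moment of $w_n$ exactly. Fix $k\ge 1$ and consider the probability that $k$ prescribed draws (say the first $k$) are all white. By the multiplicative structure of the Pólya urn, this probability telescopes: the $i$-th of these draws is white with conditional probability $(w+(i-1)s)/((w+b)+(i-1)s)$, so the joint probability is $\prod_{i=0}^{k-1}\frac{w+is}{(w+b)+is}$. More generally, for the event that among the first $n$ draws a particular set of $j$ positions are white and the complementary $n-j$ are black, exchangeability (de Finetti-style, but here provable directly by induction on $n$) shows the probability depends only on $j$ and equals $\frac{\prod_{i=0}^{j-1}(w+is)\,\prod_{i=0}^{n-j-1}(b+is)}{\prod_{i=0}^{n-1}((w+b)+is)}$. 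Summing over the $\binom{n}{j}$ choices of positions gives the exact law of $w_n$ (equivalently of the number of white draws), a Pólya/beta-binomial distribution.

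Next I would extract the asymptotics of the moments. From the exact formula, $\e{\binom{(w_n-w)/s}{k}}$ (or an analogous rising-factorial moment of the white-draw count) is a ratio of Pochhammer symbols that, after writing everything with Gamma functions and letting $n\to\infty$, converges to $\frac{\Gamma((w+b)/s)}{\Gamma(w/s)}\cdot\frac{\Gamma(w/s+k)}{\Gamma((w+b)/s+k)}$, which is exactly the $k$-th moment of a $\mathrm{Beta}(w/s,b/s)$ random variable. The Stirling/Gamma-ratio estimate $\Gamma(x+k)/\Gamma(x)\sim x^k$ makes the passage to the limit routine. Since $w_n/t_n\to X$ with $X\in[0,1]$ bounded, the moments determine the distribution uniquely (Hausdorff moment problem), so $w_n/t_n$ converges in distribution to $\mathrm{Beta}(w/s,b/s)$; integrating its density gives the stated formula for $\lim_n \p{w_n/t_n<\alpha}$. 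One should also check $t_n = (w+b)+ns$ is deterministic, so $w_n/t_n$ and the normalized white-draw count differ by a vanishing amount and have the same limit.

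The main obstacle — really the only place needing care — is justifying exchangeability of the draw sequence cleanly, i.e.\ that the probability of any 0/1 string of outcomes depends only on the number of 1's. I would prove this by a short induction on $n$: assuming it for $n-1$, condition on the first draw and use the induction hypothesis on the updated urn; the two resulting terms recombine into the claimed symmetric expression via the identity $\frac{w}{w+b}\cdot\frac{(w+s)^{\uparrow(j-1)}\,b^{\uparrow(n-1-j)}}{(w+b+s)^{\uparrow(n-1)}} + \frac{b}{w+b}\cdot\frac{w^{\uparrow j}\,(b+s)^{\uparrow(n-1-j)}}{(w+b+s)^{\uparrow(n-1)}} = \frac{w^{\uparrow j}\,b^{\uparrow(n-j)}}{(w+b)^{\uparrow n}}$, where $x^{\uparrow r}$ denotes the rising factorial. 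Everything else is bookkeeping with Gamma functions. Alternatively, one can cite Mahmoud~\cite[Theorem~5.1.2]{urns} directly, as the paper does, and present only the moment computation as a sketch; but the method-of-moments argument above is short enough to include in full.
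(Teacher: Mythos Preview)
The paper does not prove Theorem~\ref{thm:beta} at all: it is stated as a classical result of Eggenberger and P\'{o}lya and attributed to Mahmoud~\cite[Theorem~5.1.2]{urns}, with no argument given. Your proposal therefore goes well beyond what the paper does, supplying a self-contained proof where the paper is content to cite.

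Your method-of-moments route is correct and is one of the standard proofs. A couple of small remarks. First, exchangeability is actually easier than your inductive identity suggests: for any fixed $0/1$ string with $j$ ones, multiplying out the conditional probabilities step by step gives numerator $\prod_{i=0}^{j-1}(w+is)\prod_{i=0}^{n-j-1}(b+is)$ and denominator $\prod_{i=0}^{n-1}((w+b)+is)$ directly, since the $k$-th white draw always sees $w+(k-1)s$ white balls regardless of where it sits in the sequence; no induction is needed. (Incidentally, in your displayed identity the exponent on $b$ in the first summand should be $n-j$, not $n-1-j$.) Second, the passage from ``the $k$-th moments converge to those of $\mathrm{Beta}(w/s,b/s)$'' to ``$w_n/t_n$ converges in distribution'' is clean here precisely because everything lives in $[0,1]$, so the Hausdorff moment problem is determinate; you state this, and it is worth emphasizing since it is the only place the boundedness is used. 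With those tweaks the argument is complete, and your closing observation that one could simply cite Mahmoud matches exactly what the paper does.
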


Note that the right hand side equals $\p{\operatorname{Beta}(w/s,b/s) < \alpha}$,
where $\operatorname{Beta}(p,q)$ denotes a beta random variable with parameters $p$ and $q$.
The urn described in Theorem~\ref{thm:beta} is called the \emph{Eggenberger-P\'{o}lya} urn.

Let $\triangle$ be a triangle.
The \emph{standard 1-subdivision} of $\triangle$ is the set of three triangles obtained from subdividing $\triangle$ once.
For $k>1$, the \emph{standard $k$-subdivision} of $\triangle$ is the set of triangles obtained from
subdividing each triangle in the standard $(k-1)$-subdivision of $\triangle$ exactly once.
In Figure~\ref{fig:path}, the standard 2-subdivision of a triangle is illustrated.

Consider a triangle $\triangle$ containing more than one face in a RAN,
and let $\triangle_1,\triangle_2,\triangle_3$ be the three triangles in its standard 1-subdivision.
We can analyze the number of faces inside $\triangle_1$ by modelling the process of building the RAN
as an Eggenberger-P\'{o}lya urn:
after the first subdivision of $\triangle$, each of $\triangle_1$, $\triangle_2$, and $\triangle_3$
contains exactly one face.
We start with one white ball corresponding to the only face in $\triangle_1$,
and two black balls corresponding to the two faces in $\triangle_2$ and $\triangle_3$.
In each subsequent step, we choose a face uniformly at random, and subdivide it.
If the face is in $\triangle_1$, then the number of faces in $\triangle_1$ increases by 2, and otherwise the number of faces not in $\triangle_1$ increases by 2.
Thus after $k$ subdivisions of $\triangle$, the number of faces in $\triangle_1$ has the same distribution as the number of white balls
in an Eggenberger-P\'{o}lya urn with $w=1$, $b=2$, and $s=2$, after $k-1$ draws.
This observation leads to the following corollary.
\begin{corollary}
\label{cor:fairness}
Let $\triangle$ be a triangle containing $m$ faces in a RAN, and let $Z_1,Z_2,\dots,Z_9$ be the number of
faces inside the 9 triangles in the standard $2$-subdivision of $\triangle$.
Given $\varepsilon>0$, there exists $m_0=m_0(\varepsilon)$ such that
for $m > m_0$,
$$ \p{\min\{Z_1,\ldots,Z_9\} / m <\varepsilon } < 13 \sqrt[4]{\varepsilon} \:.$$
\end{corollary}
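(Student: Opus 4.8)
The plan is to bootstrap from the single-step Eggenberger--P\'olya analysis already described in the text. Recall that if $\triangle$ contains $m$ faces and $\triangle_1,\triangle_2,\triangle_3$ is its standard $1$-subdivision, then the number of faces in $\triangle_1$ (after $m-1$ subdivisions of $\triangle$) is distributed as the white-ball count in a P\'olya urn with $w=1$, $b=2$, $s=2$ after $m-2$ draws. By Theorem~\ref{thm:beta} this fraction converges to $\operatorname{Beta}(1/2,1)$, whose density on $(0,1)$ is $\tfrac12 x^{-1/2}$, so $\p{\operatorname{Beta}(1/2,1)<\delta}=\sqrt\delta$. Hence for each fixed $\delta>0$ there is an $m_0=m_0(\delta)$ so that for $m>m_0$ the probability that $\triangle_1$ (equivalently $\triangle_2$ or $\triangle_3$, by symmetry) receives fewer than $\delta m$ faces is at most, say, $2\sqrt\delta$ (absorbing the $o(1)$ error from the limit into the constant). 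So first I would record this one-level estimate as a clean lemma.

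Second, I would iterate it over the two levels of the standard $2$-subdivision. Condition on the face counts $(m_1,m_2,m_3)$ of the level-one triangles; on the event that all three exceed $\delta m$, apply the one-level estimate again inside each $\triangle_i$ with the threshold $\delta$ replaced appropriately (so that a level-two triangle getting at least $\delta\cdot(\delta m)=\delta^2 m$ faces fails with probability at most $2\sqrt\delta$, provided $\delta m>m_0$, which holds once $m$ is large enough in terms of $\delta$). A union bound over the $3$ first-level events and the $9$ second-level events gives that $\min\{Z_1,\dots,Z_9\}\ge \delta^2 m$ fails with probability at most $3\cdot 2\sqrt\delta+9\cdot 2\sqrt\delta=24\sqrt\delta$. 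One subtlety here: the level-two urn inside $\triangle_i$ runs for $m_i-2$ draws, and $m_i$ is itself random; but since we have conditioned on $m_i\ge\delta m$, the hypothesis $m_i>m_0$ needed to invoke the one-level lemma is met for $m$ large, and the Beta limit is monotone in the number of draws in the relevant direction, so the bound is uniform over the conditioning.

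Third, I would convert the $\delta^2$ in the exponent back to a linear threshold: given the target $\varepsilon$, set $\delta=\sqrt\varepsilon$, so $\delta^2=\varepsilon$ and the failure probability becomes $24\sqrt{\delta}=24\varepsilon^{1/4}$. Tightening the constant from $24$ to $13$ is then a matter of being slightly less wasteful in the union bound and in the absorption of the $o(1)$ terms (for instance, noting the three level-one events and nine level-two events are not all ``tight'' simultaneously, or simply observing that for $\varepsilon$ not small the claimed bound $13\varepsilon^{1/4}$ may exceed $1$ and is vacuous, so only small $\varepsilon$ matters and there the limiting Beta probabilities are what count); I would phrase the lemma with a generic absolute constant and then verify $13$ suffices by a short explicit computation.

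The main obstacle is the bookkeeping in step two: making precise that the second-level P\'olya urn argument can be applied uniformly despite the random and conditioned number of draws $m_i-2$, i.e.\ that the single-step estimate holds with an error bounded independently of how the first level split, as long as each part is at least $\delta m$ with $m$ large. Everything else is a routine union bound and a choice of parameters. I expect the write-up to lead with the one-level lemma, then do the two-level iteration, then optimize constants.
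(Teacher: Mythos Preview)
Your proposal is correct and follows essentially the same two-level bootstrap as the paper: set $\delta=\sqrt{\varepsilon}$, use the $\operatorname{Beta}(1/2,1)$ limit to bound the probability that a single child gets fewer than $\delta$ of the faces, then union-bound over the $3+9=12$ relevant events. To hit the constant $13$ you do not need any non-tightness argument or monotonicity of the urn in the number of draws---just absorb the $o(1)$ into a factor $(1.1)^{1/4}$ rather than $2$, giving $12\sqrt[4]{1.1\varepsilon}<13\sqrt[4]{\varepsilon}$, which is exactly what the paper does; and the ``uniformity over the conditioning'' is simply that the one-level bound holds for every $m_i>\overline m_0$, so choosing $m_0=\overline m_0/\sqrt{\varepsilon}$ suffices.
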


\begin{proof}
Let $\overline{\triangle}$ be a triangle containing $\overline{m}$ faces in a RAN, and let $W_1,W_2,W_3$ be the number of faces
inside the three triangles in the standard 1-subdivision of $\overline{\triangle}$.
Say that $\overline{\triangle}$ is \emph{balanced} if
$$\min \{ W_1, W_2, W_3 \} / \overline{m} \geq \sqrt{\varepsilon} \:.$$

By Theorem~\ref{thm:beta}, for a  given  $1\leq i \leq 3$ we have
$$\lim_{\overline{m}\rightarrow \infty} \p{\frac{W_i}{\overline{m}} < \sqrt{\varepsilon}} =
\int_{0}^{\sqrt{\varepsilon}} \frac{\Gamma(3/2)}{\Gamma(1)\Gamma(1/2)}\:x^{{-1}/{2}}\: \mathrm{d}x = \sqrt{\sqrt{\varepsilon}}\:.$$
In particular, there exists $\overline{m}_0$ such that
$$\p{\frac{W_i}{\overline{m}} < \sqrt{\varepsilon}} < \sqrt[4]{1.1 \varepsilon}$$
for $\overline{m} > \overline{m}_0$.

Now, take $m_0 = \overline{m}_0 / \sqrt{\varepsilon}$, and let $\triangle$ be a triangle containing $m>m_0$ faces in a RAN.
The probability that $\triangle$ is balanced is at least $ 1 - 3 \sqrt[4]{1.1 \varepsilon}$ by the union bound.
If $\triangle$ is balanced, then each of the three triangles in the standard 1-subdivision of $\triangle$
contains more than $m_0 \sqrt{\varepsilon} = \overline{m}_0$ faces,
so the probability that a certain one of them is not balanced is at most  $3\sqrt[4]{1.1\varepsilon}$.
Note that if $\triangle$ and these three triangles are balanced, then $\min\{Z_1,\cdots,Z_9\}/m \geq \varepsilon$.
Hence by the union bound,
$$\p{\min\{Z_1,\cdots,Z_9\} / m <\varepsilon } < 12 \sqrt[4]{1.1 \varepsilon} < 13 \sqrt[4]{\varepsilon}\:.\qedhere$$
\end{proof}

We include some definitions here.
Let $G$ be a RAN.
We denote the vertices incident with the unbounded face by $\nu_1,\nu_2,\nu_3$.
All trees we consider are rooted.
We define a tree $T$, called  the \emph{$\triangle$-tree} of $G$, as follows.
There is a one to one correspondence between the triangles in $G$ and the nodes of $T$.
For every triangle  $\triangle$ in $G$,
we denote its corresponding node in $T$ by $\node{\triangle}$.
To build $T$, start with a single root node, which   corresponds to the triangle $\nu_1 \nu_2 \nu_3$ of $G$.
Wherever a triangle $\triangle$ is subdivided into triangles $\triangle_1$, $\triangle_2$, and $\triangle_3$,
generate three children $\node{\triangle_1}$, $\node{\triangle_2}$, and $\node{\triangle_3}$ for $\node{\triangle}$,
and extend the  correspondence in the natural manner.
Note that this is a random ternary tree, with each node having either zero or three children,
and has $3n-8$ nodes and $2n-5$ leaves.
We use the term ``nodes'' for the vertices of $T$, so that ``vertices'' refer to the vertices of $G$.
Note that the leaves of $T$ correspond to the faces of $G$.
The \emph{depth} of a node $\node{\triangle}$ is its distance to the root.

\section{Upper bound for a longest path}
\label{sec:longest_upper}

In this section we prove Theorem~\ref{thm:longest_upper},
stating that a.a.s.\ all paths in a RAN have length $o(n)$.
The set of \emph{grandchildren} of a node is the set of children of its children,
so every node in a ternary tree has between zero and nine grandchildren.
For a triangle $\triangle$ in $G$, $I(\triangle)$ denotes the set of vertices of $G$ that are \emph{strictly inside} $\triangle$.

\begin{lemma}
Let $G$ be a RAN and let $T$ be its $\triangle$-tree.
Let $\node{\triangle}$ be a node of $T$ with nine grandchildren $\node{\triangle_1},\node{\triangle_2},\dots,\node{\triangle_9}$.
Then the vertex set of a path in $G$ does not intersect all of the $I(\triangle_i)$'s.
\label{lem:9children}
\end{lemma}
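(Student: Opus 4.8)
The plan is to analyze the planar structure carefully. Let $\node{\triangle}$ have children $\node{\triangle'_1},\node{\triangle'_2},\node{\triangle'_3}$, and let the nine grandchildren $\node{\triangle_1},\dots,\node{\triangle_9}$ be distributed three apiece inside $\triangle'_1,\triangle'_2,\triangle'_3$. Write $\triangle = xyz$. The first subdivision adds an interior vertex $v$ joined to $x,y,z$, creating $\triangle'_1 = vyz$, $\triangle'_2 = vxz$, $\triangle'_3 = vxy$ (up to relabelling). Each $\triangle'_j$ is then subdivided once more by an interior vertex. The key planar fact I want to extract is that the "central" grandchild triangle in each $\triangle'_j$ — the one not incident with $v$ — separates the remaining two grandchildren of $\triangle'_j$ from the outside, but more importantly that the three sub-triangles of $\triangle'_j$ that \emph{are} incident with $v$ form a structure where a path can only "pass through" $v$ a bounded number of times.

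**The main argument.**

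First I would argue: suppose for contradiction that a simple path $P$ in $G$ has vertices in all nine sets $I(\triangle_1),\dots,I(\triangle_9)$. Since the $I(\triangle_i)$ are pairwise disjoint and each is "enclosed" by the triangle $\triangle_i$, the path $P$ must enter and leave each region $I(\triangle_i)$ through the three boundary vertices of $\triangle_i$. I would next group the nine triangles by which child they sit in: three triangles inside each $\triangle'_j$. For the path to visit an interior vertex of a triangle $\triangle_i \subseteq \triangle'_j$, it must use at least one vertex of $\partial\triangle_i \subseteq \triangle'_j \cup \{$the new vertex of $\triangle'_j\}$. The crucial observation is that \emph{all three} grandchildren inside $\triangle'_j$ share the newly-added vertex $w_j$ of $\triangle'_j$ as a common vertex (each sub-triangle of a once-subdivided triangle contains the subdividing vertex), \emph{except} for exactly one of them which is the central triangle — no wait: when $\triangle'_j$ is subdivided by $w_j$, \emph{all three} resulting triangles are incident to $w_j$. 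So the three boundary triangles $\triangle_{i}$ inside $\triangle'_j$ pairwise intersect only in $w_j$ and in shared edges of $\triangle'_j$. To route through the interiors of all three of these regions, the path must pass through the cut vertex $w_j$ (or through the three corners of $\triangle'_j$), but a simple path passes through any single vertex at most once. A careful case analysis on how the path can be split at $w_j$ and the corners of $\triangle'_j$ will show it can reach the interiors of at most two of the three grandchildren of $\triangle'_j$; summing, at most $6 < 9$, a contradiction. Actually I expect the sharper count: the path can reach interiors of grandchildren in at most two of the three children $\triangle'_j$, since reaching all three would force a similar over-use of the vertex $v$ — this is where I'd need to be careful.

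**The key structural lemma to isolate.**

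The technical heart is a local planar separation statement: \emph{if $\triangle = abc$ is subdivided by an interior vertex $v$ into $\triangle'_1,\triangle'_2,\triangle'_3$, then any simple path of $G$ whose endpoints lie outside $I(\triangle)\cup\{a,b,c\}$ can have interior vertices in at most two of the three sets $I(\triangle'_j)$.} The reason: $I(\triangle'_1),I(\triangle'_2),I(\triangle'_3)$ together with $v$ partition $I(\triangle)$; any two of the regions $I(\triangle'_j)\cup\partial\triangle'_j$ meet only along an edge $va$ (or $vb$, $vc$) of the subdivision plus the triangle $\triangle$; so the path, to visit interiors of all three, would have to enter and leave via these shared edges, and the only vertex common to all three regions is $v$. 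A simple path visits $v$ once, which lets it transition between at most two of the regions in the cyclic arrangement around $v$; combined with the fact that the three corners $a,b,c$ each lie in only one $\partial\triangle'_j$, one gets that interiors of at most two regions are reachable. Applying this lemma once at $\node{\triangle}$ (to cut down from three children to two) and once inside the surviving child gives that interiors of the grandchildren are hit in at most $2 \times 3 = 6$... but this still needs the refinement to get below $9$, which it does.

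**Expected main obstacle.**

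The hard part will be making the planar separation argument fully rigorous: precisely justifying, via the Jordan curve theorem applied to the triangle boundaries in the plane embedding, that a simple path crossing into $I(\triangle_i)$ must use a vertex of $\partial\triangle_i$, and then doing the bookkeeping of how a single passage through the apex vertex $v$ (resp. $w_j$) limits the number of interior regions the path can touch. I would handle this by orienting the path, tracking maximal subpaths lying in a single region, and observing that consecutive such subpaths must be separated by a shared boundary vertex; since the shared boundary vertices are few (the apex and the three corners) and each is used once, a counting argument caps the number of distinct interior regions at two per subdivision level. Everything else — disjointness of the $I(\triangle_i)$, the containment relations, the fact that each once-subdivided triangle's three parts share the apex — is immediate from the construction of the RAN and its $\triangle$-tree.
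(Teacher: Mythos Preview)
Your approach has a genuine gap: the central ``at most two out of three'' separation lemma is false, both with and without the endpoint restriction. Concretely, write $\triangle = abc$, subdivide by $v$ to obtain $\triangle'_1 = vbc$, $\triangle'_2 = vac$, $\triangle'_3 = vab$, and let $w_j$ be the subdividing vertex of $\triangle'_j$. Then
\[
\cdots\; a\; w_3\; b\; w_1\; v\; w_2\; c\;\cdots
\]
is a simple path in $G$ with both endpoints outside $I(\triangle)\cup\{a,b,c\}$ that nevertheless meets $I(\triangle'_1)$, $I(\triangle'_2)$, and $I(\triangle'_3)$. The same construction, applied one level down, shows that within a single child $\triangle'_j$ a path can also meet the interiors of all three grandchildren. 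So neither the ``$2\times 3 = 6$'' count nor the ``at most two of the three children'' refinement goes through. The specific error in your justification is the claim that ``the three corners $a,b,c$ each lie in only one $\partial\triangle'_j$'': in fact each corner lies in exactly \emph{two} of the three child boundaries (e.g.\ $a\in\partial\triangle'_2\cap\partial\triangle'_3$), which is precisely what lets the path above weave through all three regions.

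The paper's proof sidesteps the hierarchy entirely with a global count. The union of the boundaries of the nine grandchildren consists of exactly seven vertices $v_1,\dots,v_7$ (the three corners of $\triangle$, the apex $v$, and the three sub-apices $w_1,w_2,w_3$). Any time the path moves between two of the regions $I(\triangle_i)$ it must pass through one of these seven vertices; since each is visited at most once, the $v_i$'s cut $P$ into at most eight maximal subpaths, and each such subpath has interior vertices in at most one $I(\triangle_i)$. Hence at most eight of the nine interiors are touched. This is both shorter and avoids the level-by-level case analysis that trips up your argument.
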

\begin{proof}
  There are exactly $7$ vertices in the boundaries of the {triangles}
  corresponding to the grandchildren of $\node{\triangle}$.
  Let $v_1,\dotsc, v_7$ denote such
  vertices (see Figure~\ref{fig:path}). Let $P=u_1 u_2\dots u_p$ be a path in $G$.
Clearly, when $P$ enters or leaves one of $\triangle_1,{\triangle_2},\dots,{\triangle_9}$,
it must go through a $v_i$.
So $P$ does not contain vertices from
more than one triangle between two consecutive occurrences of a $v_i$.
Since $P$ goes through each $v_i$ at most once,
the vertices $v_i$ split $P$ up into at most eight sub-paths.
Hence $P$ contains vertices from at most eight of the triangles $\triangle_i$.
  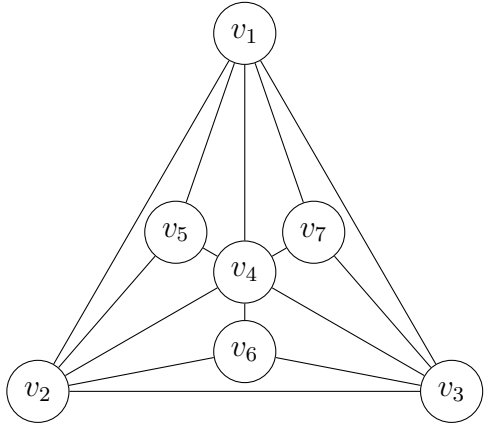
\begin{figure}
    \centering
\begin{tikzpicture}
\def \side {5.5cm}
\node[circle,draw,minimum size=.7cm] at (0,0) (v2) {$v_2$};
\node[circle,draw,minimum size=.7cm] at (60:\side) (v1) {$v_1$};
\node[circle,draw,minimum size=.7cm] at (0:\side) (v3) {$v_3$};
\node[circle,draw,minimum size=.7cm] at (barycentric cs:v1=1,v2=1,v3=1)
(v4) {$v_4$};
\node[circle,draw,minimum size=.7cm] at (barycentric cs:v1=1,v2=1,v4=1)
(v5) {$v_5$};
\node[circle,draw,minimum size=.7cm] at (barycentric cs:v4=1,v2=1,v3=1)
(v6) {$v_6$};
\node[circle,draw,minimum size=.7cm] at (barycentric cs:v1=1,v4=1,v3=1)
(v7) {$v_7$};

\draw (v1)--(v2)--(v3)--(v1);
\draw (v4)--(v1);
\draw (v4)--(v2);
\draw (v4)--(v3);
\draw (v5)--(v1);
\draw (v5)--(v2);
\draw (v5)--(v4);
\draw (v6)--(v4);
\draw (v6)--(v2);
\draw (v6)--(v3);
\draw (v7)--(v1);
\draw (v7)--(v4);
\draw (v7)--(v3);

\end{tikzpicture}
\caption{A triangle in $G$ corresponding to a node of $T$ with $9$
  grandchildren. Vertices $v_1,\dotsc, v_7$ are the vertices
  in the boundaries of the triangles corresponding to these
  grandchildren.}
\label{fig:path}
\end{figure}
\end{proof}

We first sketch a proof of Theorem~\ref{thm:longest_upper}. Let $G$ be a RAN on $n$ vertices, and let $T$ be its $\triangle$-tree.
The 2-subdivision of the triangle $\nu_1 \nu_2 \nu_3$ consists of nine triangles,
and every path misses the vertices in at least one of them by Lemma~\ref{lem:9children}.
We can now apply the same argument inductively for the other eight triangles, and repeat.
Note that if the distribution of vertices in the nine triangles of every 2-subdivision were always moderately balanced, this argument would immediately prove the theorem (by extending it to $O(\log n)$ depth).
Unfortunately, the distribution is biased towards becoming unbalanced:
the greater the number of vertices falling in a certain triangle, the higher the probability that the next vertex falls in the same triangle.
However, Corollary~\ref{cor:fairness} gives an upper bound for the probability
that this distribution is very unbalanced.
The idea is to use this Corollary iteratively and to use independence of events cleverly  to bound the probability of certain ``bad'' events.

It is easy to see that $T$ is a random ternary tree on $3n-8$ nodes
in the sense of Drmota~\cite{random_trees}.
The following theorem is due to Chauvin and Drmota~\cite[Theorem~2.3]{saturation}
(we use the wording of~\cite[Theorem~6.47]{random_trees}).

\begin{theorem}
\label{thm:saturation}
Let $\overline{H}_n$ denote the largest number $L$
such that a random $n$-node ternary tree has precisely $3^L$ nodes at depth $L$.
Let $\psi \approx 0.152$ be the unique solution in $(0,3)$ to
$$ 2 \psi \log \left (\frac{3e}{2\psi}\right) = 1 \:.$$
Then we have
$$\e{\overline{H}_n} \sim \psi \log n \:,$$
and there exists a constant $\kappa>0$ such that for every $\varepsilon>0$,
$$\p{|\overline{H}_n - \e{\overline{H}_n}| > \varepsilon} = O(\exp(-\kappa\varepsilon)) \:.$$
\end{theorem}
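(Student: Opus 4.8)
The statement is quoted from Chauvin and Drmota~\cite{saturation} and not reproved in the paper; the plan below is how one would prove it, by reducing the fill-up level to the large-deviation behaviour of a branching random walk on the infinite complete ternary tree.

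\textbf{Reduction to a branching random walk.} First I would use the recursive self-similarity of the random ternary tree $T$ on $n$ nodes. Once the root subdivides, each of the three subtrees starts with one leaf, and because every later subdivision picks a uniformly random leaf overall it picks a uniformly random leaf within whichever subtree contains it; hence, conditionally on the subtree sizes $(N_1,N_2,N_3)$, the three subtrees are independent, each distributed as a random ternary tree with $N_i$ leaves. By Theorem~\ref{thm:beta}, applied iteratively exactly as in the proof of Corollary~\ref{cor:fairness} (the urn parameters for one subtree against the other two being $w=1$, $b=2$, $s=2$), the normalised vector $n^{-1}(N_1,N_2,N_3)$ converges to a $\mathrm{Dirichlet}(\tfrac12,\tfrac12,\tfrac12)$ vector, each coordinate marginally $\mathrm{Beta}(\tfrac12,1)$ with density $\tfrac12x^{-1/2}$. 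Since $T$ has exactly $3^L$ nodes at depth $L$ precisely when each of its three subtrees is complete down to depth $L-1$, the fill-up level obeys $\overline{H}_n=1+\min\{\overline{H}_{N_1},\overline{H}_{N_2},\overline{H}_{N_3}\}$. Unrolling this recursion, a depth-$L$ node $v$ of the infinite ternary tree is present in $T$ iff the subtree sizes $n=S^{(v)}_0\ge S^{(v)}_1\ge\cdots\ge S^{(v)}_L$ along the path to $v$ never drop below $1$, i.e.\ iff $\sum_{j=1}^{L}X^{(v)}_j\le\log n$ with $X^{(v)}_j=\log\!\bigl(S^{(v)}_{j-1}/S^{(v)}_j\bigr)$. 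Along any fixed root-to-leaf path the increments are asymptotically i.i.d., and from the $\mathrm{Beta}(\tfrac12,1)$ density one gets $\p{X_j>x}=e^{-x/2}$, so $\sum_{j\le L}X^{(v)}_j$ is asymptotically $\mathrm{Gamma}(L,\tfrac12)$. Thus $\bigl(\sum_{j\le L}X^{(v)}_j\bigr)_{|v|=L}$ is a branching random walk with $\mathrm{Exponential}(\tfrac12)$ steps, and
\[
\overline{H}_n=\max\bigl\{\,L:\ \max_{|v|=L}\sum_{j\le L}X^{(v)}_j\le\log n\,\bigr\}.
\]

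\textbf{The growth rate and the constant $\psi$.} Next I would determine the linear speed of $M_L:=\max_{|v|=L}\sum_{j\le L}X^{(v)}_j$. The first-moment method gives $\e{\,\#\{v:|v|=L,\ \sum_{j\le L}X^{(v)}_j>\gamma L\}\,}=3^L\,\p{\mathrm{Gamma}(L,\tfrac12)>\gamma L}$, and Cram\'er's theorem yields $\p{\mathrm{Gamma}(L,\tfrac12)>\gamma L}=\exp\!\bigl(-L(\tfrac\gamma2-1-\log\tfrac\gamma2)+o(L)\bigr)$ for $\gamma>2$; with $\mu=\gamma/2$ the expectation is $\exp\!\bigl(L(\log3-(\mu-1-\log\mu))+o(L)\bigr)$, crossing $1$ at $\mu=\mu^{*}$, the unique root $>1$ of $\mu-1-\log\mu=\log3$. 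Markov's inequality then shows $M_L\le(1+\varepsilon)\,2\mu^{*}L$ a.a.s., hence $\overline{H}_n\ge(1-\varepsilon)\log n/(2\mu^{*})$ a.a.s.\ for every fixed $\varepsilon>0$; Biggins' theorem on the speed of a branching-random-walk maximum gives the matching $M_L/L\to2\mu^{*}$ a.s.\ (the value $2\mu^{*}=\inf_{0<t<1/2}t^{-1}\log\frac{3}{1-2t}$, obtained from the Laplace transform $\e{\sum_iD_i^{-t}}=3/(1-2t)$ of the offspring displacements), so $\overline{H}_n\le(1+\varepsilon)\log n/(2\mu^{*})$ a.a.s.\ too. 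Finally, substituting $\mu=1/(2\psi)$ into $\mu-1-\log\mu=\log3$ gives $1/(2\psi)=\log(3e/(2\psi))$, i.e.\ $2\psi\log(3e/(2\psi))=1$; thus $2\mu^{*}=1/\psi$ and $\overline{H}_n/\log n\to\psi$ a.a.s. Together with the tail bound below (which forces $\overline{H}_n$ to within $O(1)$ of its mean) this gives $\e{\overline{H}_n}\sim\psi\log n$.

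\textbf{The exponential concentration, and the main obstacle.} The event $\overline{H}_n<L$ is exactly $\{M_L>\log n\}$, so for the lower-deviation side of $\p{|\overline{H}_n-\e{\overline{H}_n}|>\varepsilon}$ the first-moment bound above, run with an exact Chernoff estimate instead of the $o(L)$ version, already gives $\p{\overline{H}_n<\psi\log n-\varepsilon}\le 3^L\p{\mathrm{Gamma}(L,\tfrac12)>\log n}=O(\exp(-(\mu^{*}-1)\varepsilon))$. The hard direction is the upper-deviation side: ruling out $\overline{H}_n\gg\psi\log n$ amounts to showing that $M_L$ almost never falls well below $2\mu^{*}L$, with an exponentially small failure probability, and the obstacle is that the $3^L$ paths of the branching random walk are strongly correlated, so one needs a (truncated) second-moment or spine change-of-measure argument carrying the correct exponential rate --- precisely the content of Chauvin and Drmota's analysis of the tree profile via singularity analysis of the associated bivariate generating function. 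I would at that point simply invoke their estimate; the remaining steps --- the urn/self-similarity reduction, the identification of $\psi$, and the first-moment direction --- are routine, so this sharp two-sided concentration of the branching-random-walk maximum is where essentially all the work lies.
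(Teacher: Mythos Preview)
You are right that the paper does not prove this theorem: it is quoted from Chauvin and Drmota~\cite{saturation} (in the wording of~\cite{random_trees}) and used as a black box, so there is no proof in the paper to compare against.

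Your branching-random-walk reduction is sound and your identification of $\psi$ is clean; in spirit it is closer to the continuous-time Broutin--Devroye machinery the paper itself deploys in Section~\ref{sec:diameter} than to Chauvin and Drmota's original generating-function argument. Two remarks. First, ``the increments are asymptotically i.i.d.\ $\mathrm{Exp}(1/2)$'' hides real work: the discrete P\'olya-urn ratios are only approximately $\mathrm{Beta}(1/2,1)$, and that error has to be controlled uniformly along $\Theta(\log n)$ levels before the Gamma tail estimates are legitimate. The cleanest fix is exactly the Yule embedding of Section~\ref{sec:diameter}, in which the edge labels are \emph{exactly} i.i.d.\ $\mathrm{Exp}(1)$ and the time satisfies $t\sim\tfrac12\log n$, so no approximation is needed. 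Second, you correctly isolate the two-sided exponential concentration of the BRW maximum as the only substantive step and then say you would invoke Chauvin--Drmota for it; that makes the proposal circular as a self-contained proof of the theorem, though accurate as a diagnosis of where the content lies. If you want to close the loop without citing the result you are proving, the lower tail of $M_L$ (your ``upper-deviation'' side for $\overline{H}_n$) can be handled by a truncated second-moment or spine change-of-measure argument for branching random walks rather than by singularity analysis.
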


Let $D = 0.07 \log n$.
Then, the following is obtained immediately.

\begin{corollary}
\label{cor:standard}
A.a.s.\ there are $3^{2D}$ nodes at depth $2D$ of $T$.
\end{corollary}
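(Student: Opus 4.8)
The plan is to derive Corollary~\ref{cor:standard} as a direct consequence of Theorem~\ref{thm:saturation}, after first translating the statement about $T$ into the language of random ternary trees. Recall that $T$ has $3n-8$ nodes, so $T$ is a random ternary tree on $N := 3n-8$ nodes in the sense of Drmota, and Theorem~\ref{thm:saturation} applies with $n$ replaced by $N$. Since $\log N = \log(3n-8) \sim \log n$, we have $\e{\overline H_N} \sim \psi\log N \sim \psi\log n$, where $\psi \approx 0.152$. The quantity $\overline H_N$ is, by definition, the largest $L$ for which $T$ has exactly $3^L$ nodes at depth $L$; note that if $T$ has $3^L$ nodes at depth $L$ then it automatically has $3^{L'}$ nodes at every depth $L' \le L$, since every node at depth $L$ has an ancestor at depth $L'$, and a ternary tree can have at most $3^{L'}$ nodes at depth $L'$. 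So the event ``there are $3^{2D}$ nodes at depth $2D$ of $T$'' is precisely the event $\{\overline H_N \ge 2D\}$.

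Next I would verify that $2D = 0.14\log n$ is safely below the threshold $\e{\overline H_N} \sim \psi\log n \approx 0.152\log n$. Concretely, fix a constant $\delta > 0$ with $0.14 + \delta < \psi$ (for instance $\delta = 0.005$ works since $0.145 < 0.152$). By the asymptotic $\e{\overline H_N} \sim \psi\log n$, for all large enough $n$ we have $\e{\overline H_N} \ge (\psi - \delta/2)\log n \ge (0.14 + \delta/2)\log n$, hence $\e{\overline H_N} - 2D \ge (\delta/2)\log n \to \infty$. Now apply the concentration bound from Theorem~\ref{thm:saturation}: with $\varepsilon := \e{\overline H_N} - 2D$ we get
$$\p{\overline H_N < 2D} \le \p{|\overline H_N - \e{\overline H_N}| > \varepsilon} = O\big(\exp(-\kappa\varepsilon)\big) = O\big(\exp(-\kappa(\delta/2)\log n)\big) = O\big(n^{-\kappa\delta/2}\big),$$
which tends to $0$ as $n \to \infty$. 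Therefore a.a.s.\ $\overline H_N \ge 2D$, which by the previous paragraph is exactly the statement that a.a.s.\ there are $3^{2D}$ nodes at depth $2D$ of $T$.

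There is essentially no hard obstacle here; the corollary is a routine unpacking of Theorem~\ref{thm:saturation}. The only points requiring a modicum of care are: (i) making sure the index shift from $n$ vertices of $G$ to $N = 3n-8$ nodes of $T$ does not affect the leading-order logarithm (it does not, since $\log(3n-8)\sim\log n$); (ii) observing that the saturation level is monotone, so that having the full complement $3^{2D}$ of nodes at depth $2D$ is equivalent to $\overline H_N \ge 2D$ and is implied by nothing weaker being needed; and (iii) choosing the slack constant so that $2D$ lies strictly below $\psi\log n$, which is where the numerical choice $D = 0.07\log n$ (so $2D = 0.14\log n < 0.152\log n$) is used. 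The concentration tail in Theorem~\ref{thm:saturation} is more than strong enough: even the first moment together with Markov's inequality applied to $\e{\overline H_N} - \overline H_N$ would not quite suffice for an a.a.s.\ statement, but the exponential tail gives a polynomially small failure probability with room to spare.
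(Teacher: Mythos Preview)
Your proposal is correct and follows exactly the route the paper intends: the paper simply states that the corollary ``is obtained immediately'' from Theorem~\ref{thm:saturation}, and you have spelled out the implicit steps (the shift from $n$ to $N=3n-8$, the monotonicity of the saturation level, and the numerical comparison $2D=0.14\log n<\psi\log n$ together with the exponential tail). There is no substantive difference in approach.
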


Let $\varepsilon>0$ be a fixed number such that $3 (13\sqrt[4]{4\varepsilon})^{1/5} < 1$,
and let $p_F = 1 - 13\sqrt[4]{4\varepsilon}$.
Notice that $3 (1-p_F)^{1/5} < 1$.
We say node $\node{\triangle}$ is \emph{fair} if at least one of the following holds:
\begin{enumerate}
\item[(i)]  the number of faces inside $\triangle$ is less than $3^D$, or
\item[(ii)] $\node{\triangle}$ has nine grandchildren $\node{\triangle_1},\node{\triangle_2},\dots,\node{\triangle_9}$,
and $|I(\triangle_i)| \geq \varepsilon |I(\triangle)|$ for all $1\leq i \leq 9$.
\end{enumerate}
A triangle $\triangle$ in $G$ is fair if its corresponding node $\node{\triangle}$ is fair.

\begin{lemma}
\label{lem:fairness}
Let $\node{\triangle}$ be a node in $T$  with nine grandchildren, and let $U$ be a subset of the set of ancestors of $\node{\triangle}$,
not including the parent of $\node{\triangle}$.
The probability that $\node{\triangle}$ is fair, conditional on
all nodes in $U$ being unfair, is at least $p_F$.
\end{lemma}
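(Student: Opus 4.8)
The plan is to condition on the number $m$ of faces inside $\triangle$ and reduce to Corollary~\ref{cor:fairness}. If $m<3^D$ then $\node{\triangle}$ is fair by condition~(i) no matter what happens elsewhere, so it suffices to bound the probability that $\node{\triangle}$ is \emph{not} fair; since $\node{\triangle}$ has nine grandchildren $\node{\triangle_1},\dots,\node{\triangle_9}$ by hypothesis, this event is exactly $\{m\ge 3^D\}\cap\{\min_i|I(\triangle_i)|<\varepsilon|I(\triangle)|\}$. Write $\mathcal{U}$ for the event that every node of $U$ is unfair. Conditioning on the value $k\ge 3^D$ of $m$,
\[
\p{\node{\triangle}\text{ not fair}\mid\mathcal{U}}=\sum_{k\ge 3^D}\p{m=k\mid\mathcal{U}}\,\p{\min_i|I(\triangle_i)|<\varepsilon|I(\triangle)|\mid m=k,\ \mathcal{U}}\:.
\]

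The crux is that the inner probability does not depend on $\mathcal{U}$. Conditioned on $\{m=k\}$, the portion of the RAN inside $\triangle$ is distributed as an independent copy of the RAN process stopped when it has $k$ faces, independently of the rest of the RAN (the RAN with the vertices of $I(\triangle)$ deleted); this is the exchangeability fact behind Corollary~\ref{cor:fairness}, and in $\triangle$-tree language it says that, given its number of leaves, the subtree of $T$ rooted at $\node{\triangle}$ is a uniformly random ternary tree independent of the rest of $T$. Now every node of $U$ is an ancestor of $\node{\triangle}$ strictly above its parent (which is precisely why the parent of $\node{\triangle}$ is excluded from $U$, as its fairness would involve triangles lying inside $\triangle$), so its fairness depends only on the numbers of faces of triangles that each either contain $\triangle$ or have interior disjoint from $I(\triangle)$. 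The number of faces of a triangle $T'\supseteq\triangle$ equals $k$ plus twice the number of RAN-vertices interior to $T'$ but not to $\triangle$, a deterministic function of $k$ and the rest of the RAN; the number of faces of a triangle with interior disjoint from $I(\triangle)$ is a function of the rest of the RAN alone. Hence $\mathcal{U}$ is measurable with respect to $\{m=k\}$ together with the rest of the RAN, while $\{\min_i|I(\triangle_i)|<\varepsilon|I(\triangle)|\}$ is measurable with respect to the inside portion, so by the conditional independence above
\[
\p{\min_i|I(\triangle_i)|<\varepsilon|I(\triangle)|\mid m=k,\ \mathcal{U}}=\p{\min_i|I(\triangle_i)|<\varepsilon|I(\triangle)|\mid m=k}\:.
\]

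To finish, fix $k\ge 3^D$. Then a triangle with $k$ faces has its standard $2$-subdivision completed (as $3^D\ge 9$ for $n$ large), and if $Z_1,\dots,Z_9$ are the face counts of the nine resulting triangles then $|I(\triangle_i)|=(Z_i-1)/2$ and $|I(\triangle)|=(k-1)/2$, so $\{\min_i|I(\triangle_i)|<\varepsilon|I(\triangle)|\}=\{\min_iZ_i<\varepsilon(k-1)+1\}\subseteq\{\min_iZ_i/k<4\varepsilon\}$ once $k\ge(1-\varepsilon)/(3\varepsilon)$. Since $D=0.07\log n\to\infty$, for $n$ large every $k\ge 3^D$ satisfies this and also $k>m_0(4\varepsilon)$, so Corollary~\ref{cor:fairness} applied with parameter $4\varepsilon$ bounds the last probability by $13\sqrt[4]{4\varepsilon}=1-p_F$. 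Substituting into the first display and using $\sum_{k\ge 3^D}\p{m=k\mid\mathcal{U}}\le 1$ yields $\p{\node{\triangle}\text{ not fair}\mid\mathcal{U}}\le 1-p_F$, as required. The step I expect to need genuine care is the conditional independence of the middle paragraph --- in particular the claim that $m$ together with the rest of the RAN already determines whether all ancestors in $U$ are unfair --- everything else being bookkeeping or a direct appeal to Corollary~\ref{cor:fairness}.
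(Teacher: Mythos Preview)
Your proof is correct and follows essentially the same route as the paper: condition on the number of faces inside $\triangle$, use that (because $U$ excludes the parent of $\node{\triangle}$) the interior of $\triangle$ is conditionally a fresh RAN independent of the event $\mathcal{U}$, and then invoke Corollary~\ref{cor:fairness}. The only cosmetic differences are that the paper states the conditional-independence step in a single sentence rather than spelling out the measurability argument, and that it applies Corollary~\ref{cor:fairness} with parameter $2\varepsilon$ (using the cleaner chain $Z_i\ge 2\varepsilon\overline m \Rightarrow (Z_i-1)/2 \ge \varepsilon(\overline m -1)/2$) whereas you use parameter $4\varepsilon$; both choices land safely below $1-p_F=13\sqrt[4]{4\varepsilon}$.
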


\begin{proof}
Let $n$ be sufficiently large that $3^D > m_0(2\varepsilon)$, where $m_0(2\varepsilon)$ is defined as in
Corollary~\ref{cor:fairness}.
Let $\overline{M}$ denote the number of faces inside $\triangle$,
and let $\overline{m} \geq 3^D$ be arbitrary.
If $\overline{M} < 3^D$, then $\node{\triangle}$ is fair  by definition,
so it is enough to prove that
$$\p{\node{\triangle}\mathrm{\ is\ fair} \cond \mathrm{nodes\ in\ }U\mathrm{\ are\ unfair}, \overline{M} = \overline{m}} \geq p_F \:.$$
Since $U$ does not contain the parent of $\node{\triangle}$,
conditional on nodes in $U$ being unfair and $\overline{M}=\overline{m}$,
the subgraph of  $G$ induced by vertices on and inside $\triangle$
is distributed as a RAN with $\overline{m}$ faces.

Let $\triangle_1,\dots,\triangle_9$ be the nine triangles in the standard 2-subdivision of $\triangle$,
and let $Z_1,Z_2,\dots,Z_9$ be the number of faces inside them.
By Corollary~\ref{cor:fairness} and since $\overline{m} > m_0(2\varepsilon)$, with probability at least $p_F$
for all $1\leq i\leq 9$,
$$Z_i \geq 2 \varepsilon \overline{m} \:,$$
and so
$$I(\triangle_i) = \frac{Z_i - 1}{2} \geq \frac{2\varepsilon \overline{m} - 1}{2} \geq \varepsilon\:\frac{\overline{m}-1}{2} = \varepsilon I(\triangle) \:, $$
which implies that $\node{\triangle}$ is fair.
\end{proof}

Let $k = (\log \log n) / 2$.
Let $d_0 = 0$ and $d_i = 2^{i-1} k$ for $1\leq i \leq k$.
Notice that $d_k < D$.
\begin{lemma}
\label{lem:lots_of_fair}
A.a.s\ the following is true.
Let $v$ be an arbitrary node of $T$ at depth $d_i$ for some $1\leq i \leq k$,
and let $u$ be the ancestor of $v$ at depth $d_{i-1}$.
Then there is at least one fair node $f$ on the $(u,v)$-path in $T$,
such that the depth of $f$ is between $d_{i-1}$ and $d_i - 2$, inclusive.
\end{lemma}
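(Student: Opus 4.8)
The plan is, for each $1\le i\le k$ and each of the at most $3^{d_i}$ positions $v$ at depth $d_i$ in $T$, to bound the probability of the ``bad'' event $\mathcal B(i,v)$ that the $(u,v)$-path carries no fair node at depth in $[d_{i-1},d_i-2]$, and then to take a union bound over $v$ and over $i$. Write $L=d_i-d_{i-1}$ and let $u=w_0,w_1,\dots,w_L=v$ be the $(u,v)$-path, $\triangle_{w_j}$ being the triangle of $G$ corresponding to $w_j$. Because the fairness bound of Lemma~\ref{lem:fairness} is applicable at a node only when we are \emph{not} conditioning on its parent, I would inspect only the nodes $w_0,w_2,\dots,w_{2s}$ at even distance from $u$, where $s=\lfloor L/2\rfloor-1$: all of these lie at depth between $d_{i-1}$ and $d_i-2$ and there are $\lfloor L/2\rfloor$ of them, so if any one is fair then $\mathcal B(i,v)$ fails. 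Hence $\mathcal B(i,v)$ is contained in the event that $w_0,w_2,\dots,w_{2s}$ are all nodes of $T$ and unfair (``unfair'' already entails being a node of $T$, which takes care of the fact that these positions are only randomly present).

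The heart of the argument is
\[
\p{w_0,w_2,\dots,w_{2s}\text{ all unfair}}=\prod_{t=0}^{s}\p{w_{2t}\text{ unfair}\mid w_0,w_2,\dots,w_{2(t-1)}\text{ unfair}}\le(1-p_F)^{s+1}.
\]
To see that each factor is at most $1-p_F$, condition additionally on $\overline m$, the number of faces that ever lie inside $\triangle_{w_{2t}}$. If $\overline m<3^D$ then $w_{2t}$ is fair by clause (i). If $\overline m\ge3^D$, then since $\{w_0,\dots,w_{2(t-1)}\}$ consists of strict ancestors of $w_{2t}$ \emph{none of which is its parent} $w_{2t-1}$, the event being conditioned on is --- given $\overline m$ --- a function of the part of $G$ outside $\triangle_{w_{2t}}$, so by the branching property of the RAN process (the same reduction used in the proof of Lemma~\ref{lem:fairness}) the interior of $\triangle_{w_{2t}}$ is an independent RAN on $\overline m$ faces. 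Letting $Z_1,\dots,Z_9$ be the numbers of faces inside the nine triangles of the standard $2$-subdivision of $\triangle_{w_{2t}}$, the computation in the proof of Lemma~\ref{lem:fairness} shows that $\min_i Z_i\ge2\varepsilon\overline m$ (which, as $\overline m\ge3^D$, forces each $Z_i\ge2$ and hence that $w_{2t}$ has nine grandchildren, combinatorially equal to those nine triangles) makes $w_{2t}$ fair; thus $\{w_{2t}\text{ unfair}\}\subseteq\{\min_i Z_i<2\varepsilon\overline m\}$, which by Corollary~\ref{cor:fairness} with parameter $2\varepsilon$ (legitimate since $\overline m\ge3^D>m_0(2\varepsilon)$ for $n$ large) has probability at most $13\sqrt[4]{2\varepsilon}\le13\sqrt[4]{4\varepsilon}=1-p_F$. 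Averaging over $\overline m$ yields $\p{\mathcal B(i,v)}\le(1-p_F)^{\lfloor L/2\rfloor}$.

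It remains to sum, using $d_0=0$, $d_1=k$ and $d_i=2d_{i-1}$ for $i\ge2$. The total contribution of depth $d_i$ is at most $3^{d_i}(1-p_F)^{\lfloor L/2\rfloor}$, which is $O\bigl((9(1-p_F))^{k/2}\bigr)$ when $i=1$ and $O\bigl((81(1-p_F))^{d_{i-1}/2}\bigr)=O\bigl((81(1-p_F))^{k/2}\bigr)$ when $i\ge2$, where we use that $81(1-p_F)<1$ --- which holds because the choice of $\varepsilon$ forces $1-p_F<3^{-5}<1/81$. Summing over the $k$ values of $i$ bounds the overall failure probability by $O\bigl(k\,(81(1-p_F))^{k/2}\bigr)$, and this tends to $0$ since $81(1-p_F)<1$ and $k=(\log\log n)/2\to\infty$, which proves the lemma.

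The step I expect to be the main obstacle is the second paragraph: one must argue carefully that conditioning on the selected ancestors being unfair --- crucially omitting the parent of $w_{2t}$, which is why only every second node of the path is inspected --- does not perturb the distribution of the RAN inside $\triangle_{w_{2t}}$ given its total face count, so that Corollary~\ref{cor:fairness} can be applied verbatim; keeping the ``is a node of $T$'' bookkeeping straight is part of this. By contrast, the summation is routine, the key observation being simply that the geometric growth of the path lengths $L=d_i-d_{i-1}$ exactly offsets the geometric growth $3^{d_i}$ in the number of endpoints $v$.
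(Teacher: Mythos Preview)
Your proof is correct and follows essentially the same approach as the paper: inspect every second node along the $(u,v)$-path so that Lemma~\ref{lem:fairness} applies (the parent is never in the conditioning set), chain the resulting factors of $1-p_F$, and finish with a union bound over the at most $3^{d_i}$ nodes at depth $d_i$ and over $i$. The only cosmetic differences are that you index the path from $u$ rather than from $v$, and in the summation you exploit $1-p_F<3^{-5}$ to get $81(1-p_F)<1$, whereas the paper uses the equivalent bound $3(1-p_F)^{1/5}<1$ and the cruder exponent $d_i/5$; your final estimate $O\bigl(k\,(81(1-p_F))^{k/2}\bigr)=o(1)$ is the same conclusion.
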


\begin{proof}
Let us say that a node is \emph{bad} if the conclusion of the lemma is false for it.
We prove that the probability that a bad node exists is $o(1)$.
Let $v$ be a node at depth $d_i$ and $u$ be its ancestor at depth $d_{i-1}$.
Let $x_0 = v, x_1, x_2, \dots, x_r = u$ be the $(v,u)$-path in $T$, where $r = d_i - d_{i-1}$.
By Lemma~\ref{lem:fairness}, the probability that none of
$x_{2\lfloor r/2 \rfloor},x_{2\lfloor r/2 \rfloor - 2}, \dots, x_4, x_2$ is fair is at most
$$(1-p_F)^{\lfloor r/2 \rfloor} \leq (1-p_F)^{(d_i - d_{i-1} - 1)/2} \leq (1-p_F)^{d_{i}/5}\:.$$
There are at most $3^{d_i}$ nodes at depth $d_i$, so by the union bound, the probability that there is at least one bad node $v$ at depth $d_i$
is at most
$$3^{d_i} (1-p_F)^{d_i/5} = \left [ 3 (1-p_F)^{1/5} \right ]^{d_i} \leq \left [ 3 (1-p_F)^{1/5} \right ]^{k} = o(1/k) \:,$$
by the definition of $d_i$  and as
$3 (1-p_F)^{1/5} < 1$ and $k\to\infty$.
Consequently, the probability that there exists a bad node
whose depth lies in $\{d_1,d_2,\dots,d_k\}$ is $o(1)$.
\end{proof}

We are now ready to prove Theorem~\ref{thm:longest_upper}.

\begin{proof}[Proof of Theorem~\ref{thm:longest_upper}.]
Let $G$ be a RAN with $n$ vertices and $m$ faces, and let $T$ be the $\triangle$-tree of $G$.
The \emph{depth} of a vertex $v$ of $G$ is defined as
$\max \{\operatorname{depth}(\triangle) : v \in I(\triangle) \}$,
and we define the depth of $\nu_1, \nu_2, \nu_3$ to be $-1$.
Say a vertex is \emph{deep} if its depth is greater than $D$, and is \emph{shallow} otherwise.
Let $n_D$ denote the number of deep vertices.
Note that the number of shallow vertices is at most $(3^{D+1}+5)/2$,
which is $o(n)$ by the choice of $D$, so $n_D = n - o(n)$.
For a node $\node{\triangle}$ of $T$, let $I_D(\triangle)$ be the set of deep vertices in $I(\triangle)$, and
for a subset $A$ of nodes of $T$, let
$$I_D(A) = \bigcup_{\node{\triangle}\in A} I_D(\triangle) \:.$$

\begin{claim}
If $T$ is full down to depth $2D$ where $D\ge 2$,
then any fair node $\node{\triangle}$ with depth at most $D$
has nine grandchildren $\node{\triangle_1},\node{\triangle_2},\dots,\node{\triangle_9}$
such that
\begin{equation}
\label{eq:farisplit}
|I_D(\triangle_i)| \geq \varepsilon |I_D(\triangle)| / 2 \qquad i=1,2,\dots,9.
\end{equation}
\end{claim}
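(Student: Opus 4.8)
The plan is to show that, under the stated hypotheses, alternative (i) in the definition of a fair node cannot occur, so $\node{\triangle}$ must satisfy the ``balanced'' alternative (ii); and then to upgrade the balance statement for $I(\cdot)$ to one for $I_D(\cdot)$ by checking that only a negligible number of the relevant vertices are shallow.

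\emph{Step 1 (fairness forces the balanced alternative).} Put $d=\operatorname{depth}(\node{\triangle})\le D$. Since $T$ is full down to depth $2D$ and $d\le D<2D$, every descendant of $\node{\triangle}$ down to depth $2D$ is present, so $\node{\triangle}$ has $3^{2D-d}$ descendants at depth $2D$, and each of them is an ancestor of, or equal to, at least one leaf of $T$. Hence $\triangle$ contains at least $3^{2D-d}\ge 3^{D}$ faces, so alternative (i) fails. As $\node{\triangle}$ is fair, alternative (ii) must hold: $\node{\triangle}$ has nine grandchildren $\node{\triangle_1},\dots,\node{\triangle_9}$ — the nine triangles of the standard $2$-subdivision of $\triangle$ — with
$$|I(\triangle_i)|\ \ge\ \varepsilon\,|I(\triangle)|,\qquad i=1,\dots,9 .$$
Moreover $|I(\triangle)|=(\#\{\text{faces in }\triangle\}-1)/2\ge (3^{2D-d}-1)/2\ge 3^{\,2D-d-1}$, using $D\ge 2$.

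\emph{Step 2 (few shallow vertices inside a triangle).} For any triangle $\triangle'$ of depth $d'$ I will bound the number of shallow vertices in $I(\triangle')$ by a geometric series. The triangles of $G$ containing a fixed vertex $v$ in their interior form a chain under inclusion, and along this chain the depth increases by exactly $1$ at each step; hence if $\operatorname{depth}(v)=j$ then $v$ is the centre of the standard $1$-subdivision of the unique depth-$j$ triangle of this chain, which is a descendant of (or equal to) $\triangle'$ whenever $v\in I(\triangle')$ and $d'\le j$. This gives an injection from $\{v\in I(\triangle'):\operatorname{depth}(v)=j\}$ into the set of descendants-or-equal of $\triangle'$ at depth $j$, of size at most $3^{\,j-d'}$. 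Summing over $d'\le j\le D$, the number of shallow vertices in $I(\triangle')$ is at most $\tfrac12\,3^{\,D-d'+1}$. Applied to $\triangle_i$, which has depth $d+2$, this gives at most $\tfrac12\,3^{\,D-d-1}\le 3^{\,D-d}$ shallow vertices in $I(\triangle_i)$.

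\emph{Step 3 (combining).} For every $i$,
$$|I_D(\triangle_i)|\ \ge\ |I(\triangle_i)|-3^{\,D-d}\ \ge\ \varepsilon\,|I(\triangle)|-3^{\,D-d} .$$
By Step 1, $|I(\triangle)|\ge 3^{\,2D-d-1}=3^{\,D-1}\cdot 3^{\,D-d}$. Since $D=0.07\log n\to\infty$ and $\varepsilon$ is a fixed positive constant, $3^{\,D-1}\ge 2/\varepsilon$ for all large $n$, so $\varepsilon\,|I(\triangle)|\ge 2\cdot 3^{\,D-d}$, and therefore
$$|I_D(\triangle_i)|\ \ge\ \varepsilon\,|I(\triangle)|-\tfrac12\,\varepsilon\,|I(\triangle)|\ =\ \tfrac{\varepsilon}{2}\,|I(\triangle)|\ \ge\ \tfrac{\varepsilon}{2}\,|I_D(\triangle)| ,$$
which is \eqref{eq:farisplit}. (The hypothesis $D\ge 2$ only guarantees that the grandchildren exist and that $2D-d\ge 1$; the last inequality really needs $D$ at least an absolute constant depending on $\varepsilon$, which is harmless here since $D=0.07\log n$.) The delicate point — and the only real obstacle — is precisely making the shallow-vertex error term negligible in Step 3: this requires $|I(\triangle)|$ to be exponentially large in $D$, which is exactly what ``$T$ full down to depth $2D$'' buys us given $\operatorname{depth}(\node{\triangle})\le D$; everything else is the nestedness of the triangles of an Apollonian network plus routine geometric-series estimates.
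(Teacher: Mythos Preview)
Your argument is correct and follows the same strategy as the paper: rule out alternative~(i) using fullness to depth $2D$, count shallow vertices by a geometric series, and then upgrade the balance on $|I(\cdot)|$ to one on $|I_D(\cdot)|$. The difference lies in how the last step is executed. The paper proves the universal inequality $|I_D(\triangle')|\ge|I(\triangle')|/2$ for \emph{every} triangle $\triangle'$: if $\triangle'$ has depth $r\le D$, then $I(\triangle')$ contains exactly $(3^{D-r+1}-1)/2$ shallow vertices but, by fullness to depth $D+2$, already $3^{D-r+1}$ vertices at depth exactly $D{+}1$, so deep vertices outnumber shallow ones outright. Applying this to each $\triangle_i$ gives $|I_D(\triangle_i)|\ge|I(\triangle_i)|/2\ge(\varepsilon/2)|I(\triangle)|\ge(\varepsilon/2)|I_D(\triangle)|$, valid for all $D\ge2$. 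Your route instead subtracts the shallow count from $|I(\triangle_i)|$ and absorbs the error into $\varepsilon|I(\triangle)|$, which forces the side condition $3^{D-1}\ge2/\varepsilon$; as you acknowledge, this is harmless for the application since $D\to\infty$, but it does not quite deliver the claim as literally stated with $D\ge2$. The paper's trick---comparing the shallow count against the depth-$(D{+}1)$ count inside the \emph{same} triangle---is what removes the dependence on $\varepsilon$.
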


\begin{claimproof}
Assume that $T$ is full down to depth $2D$.
We first show that for any triangle $\triangle$,
\begin{equation}
\label{eq:half}
|I_D(\triangle)| \geq |I(\triangle)| / 2 \:.
\end{equation}
To prove (\ref{eq:half}), let $\triangle$ be a triangle at depth $r$.
If $r>D$, then $I(\triangle)$ contains no shallow vertices, and (\ref{eq:half}) is obviously true.
Otherwise, the number of shallow vertices in $I(\triangle)$
equals $1 + 3 + \dots + 3^{D-r} = (3^{D-r+1}-1)/2$,
whereas the number of vertices in $I(\triangle)$ at depth $D+1$
equals $3^{D-r+1}$, where we have used the fact that $T$ is full down to depth at least $D+2$.
Thus $I(\triangle)$ contains more deep vertices than shallow vertices,
and (\ref{eq:half}) follows.

Now, let $\node{\triangle}$ be a fair triangle having depth at most $D$.
Since $T$ is full down to depth $2D$, {the number of faces inside $\triangle$ is at least $3^D$}.
So, as $\node{\triangle}$ is fair, it has nine grandchildren $\node{\triangle_1},\node{\triangle_2},\dots,\node{\triangle_9}$
such that $|I(\triangle_i)| \geq \varepsilon |I(\triangle)|$ for all $1\leq i \leq 9$.
Applying (\ref{eq:half}) gives
\begin{equation*}
|I_D(\triangle_i)| \geq |I(\triangle_i)|/ 2 \geq \varepsilon |I(\triangle)| / 2 \geq \varepsilon |I_D(\triangle)| / 2 \qquad i=1,2,\dots,9 \:,
\end{equation*}
as required.
\end{claimproof}

We {may} condition on two events that happen a.a.s.:\ the first one is the conclusion of Lemma~\ref{lem:lots_of_fair},
and the second one is that of Corollary~\ref{cor:standard}, namely that
$T$ is full down to depth $2D$.

To complete the proof of the theorem, for a given path $P$ in $G$, we will define a sequence $B_0, B_1, \dots, B_k$ of sets of nodes of $T$,
such that for all $0 \leq i \leq k$ we have
\begin{enumerate}
\item [(i)] $ |I_D(B_i)| \geq n_D \left(1 - \left(1-\frac{\varepsilon}{2}\right)^i\right)$, and
\item [(ii)] $V(P) \cap I_D(B_i) = \emptyset$.
\end{enumerate}
Before defining the $B_i$'s, let us show that this completes the proof.
Notice that (i) gives
$$|I_D(B_k)| \geq n_D - n_D (1-\varepsilon/2)^k \geq n_D - n_D \exp(-\varepsilon k /2)\:,$$
which is $n - o(n)$ since $n_D = n-o(n)$ and $\varepsilon k = \omega(1)$.
Therefore, by (ii),
$$|V(P)| \leq |V(G) \setminus I_D(B_k)| = o(n)\:.$$

{So, now} we define the sets $B_i$.
Let $S_i$ denote the set of nodes of $T$ at depth $d_i$.
Let $B_0 = \emptyset$ and we define the $B_i$'s inductively,
in such a way that $B_i \subseteq S_i$.
Fix  $1 \leq i \leq k$, and assume that $B_{i-1}$ has already been defined.
Let $C_i$ be the set of nodes at depth $d_i$ whose ancestor at depth $d_{i-1}$ is in $B_{i-1}$
(so, in particular, $C_1=\emptyset$).
By the induction hypothesis, $V(P)$ does not intersect $I_D(B_{i-1}) = I_D(C_i)$,
and $ |I_D(C_i)| = |I_D(B_{{i-1}})| \geq n_D \left(1 - (1-\varepsilon/2)^{i-1}\right)$.

Since the conclusion of Lemma~\ref{lem:lots_of_fair} is true, there exists a set $F$ of fair nodes,
with depths between $d_{i-1}$ and $d_i - 2$,
such that every $v \in S_i \setminus C_i$ is a descendent of some node in $F$.
Now, for every $x,y \in F$ such that $y$ is a descendent of $x$,
remove $y$ from $F$.
This results in a set $\{u_1,u_2,\dots,u_s\}$ of fair nodes,
with depths between $d_{i-1}$ and $d_i - 2$,
such that every $v \in S_i \setminus C_i$ is a descendent of a unique $u_j$.
Recall that $d_k < D$ and so all the $u_j$'s have depths less than $D$.

Let $w_1,\dots,w_9$ be the grandchildren of $u_1$.
By Lemma~\ref{lem:9children}, $V(P)$ does not intersect all of the $I(w_i)$'s{;} say it does not intersect $I({w_1})$.
Then mark all of the descendants of $w_1$, and perform a similar procedure for $u_2,\dots,u_s$.
Let $M_i$ be the set of marked nodes in $S_i$. See Figure~\ref{fig:induction}.
\begin{figure}
\label{fig:induction}
\begin{center}
    \begin{tikzpicture}[dot/.style={fill=black,circle,minimum
        size=3pt},scale=.5]
\usetikzlibrary{shapes}
\draw (2,0) ellipse (1.5 and .5);
\node (a) at (1,0) {$\times$};
\node (b) at (2,0) {$\bullet$};
\node (c) at (3,0) {$\bullet$};
\node (e1) at (0.5,0) {};
\node (f1) at (3.5,0) {};

\draw (7.5,0) ellipse (3 and .5);
\node (d) at (5,0) {$\bullet$};
\node (e) at (6,0) {$\bullet$};
\node (dd) at (9,0) {$\bullet$};
\node (e2) at (10,0) {$\bullet$};
\node (f) at (7,0) {$\times$};
\node (g) at (8,0) {$\times$};
\node (e2) at (4.5,0) {};
\node (f2) at (10.5,0) {};

\draw (14.5,0) ellipse (3 and .5);
\node (h) at (12,0) {$\times$};
\node (i) at (13,0) {$\times$};
\node (j) at (14,0) {$\bullet$};
\node (k) at (15,0) {$\bullet$};
\node (l) at (16,0) {$\bullet$};
\node (m) at (17,0) {$\bullet$};
\node (e3) at (11.5,0) {};
\node (f3) at (17.5,0) {};

\draw (20.5,-0.5) ellipse (2 and 1.25);
\node (n) at (19,0) {$+$};
\node (o) at (20,0) {$+$};
\node (p) at (21,0) {$+$};
\node (q) at (22,0) {$+$};
\node (r) at (19,-1) {$+$};
\node (s) at (20,-1) {$+$};
\node (t) at (21,-1) {$+$};
\node (u) at (22,-1) {$+$};
\node (e4) at (18.5,-0.5) {};
\node (f4) at (22.5,-0.5) {};

\draw  (0,-2) rectangle (23,1);
\node (v) at (0,-.5) [label=180:$S_i$] {};

\node (u1) at (2,3) {$u_1$};
\node (u2) at (7.5,3) {$u_2$};
\node (u3) at (14.5,3) {$u_3$};

\draw (u1)--(e1);
\draw (u1)--(f1);
\draw (u2)--(e2);
\draw (u2)--(f2);
\draw (u3)--(e3);
\draw (u3)--(f3);

\draw (20.5,3) ellipse (1.5 and .5);
\node (u41) at (19,3) {};
\node (u42) at (22,3) [label={0:$B_{i-1}$}] {};
\draw (u41)--(e4);
\draw (u42)--(f4);

\end{tikzpicture}
\end{center}
\caption{Illustration for the inductive step in the proof of Theorem~\ref{thm:longest_upper}:
Vertices in $C_i$ are shown as $+$,
vertices in $M_i$ are shown as $\times$,
and vertices in $S_i \setminus (C_i \cup M_i)$ are shown as dots.}
\end{figure}
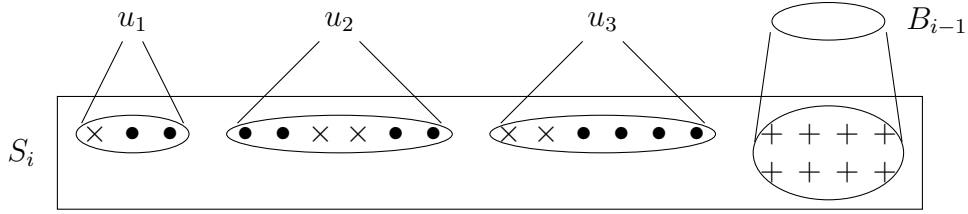
Thus $V(P) \cap I_D(M_i) = \emptyset$.
Moreover, since the $u_j$'s are fair
and the $I_D(u_j)$'s are disjoint, it follows from the claim that
$$|I_D(M_i)| \geq \sum_{j=1}^{s} \varepsilon |I_D(u_j)| / 2 = {\varepsilon} | I_D(S_i \setminus C_i) | / 2 = \varepsilon (n_D - |I_D(B_{i-1})|)  / 2\:.$$
Now, let $B_i = C_i \cup M_i$. Then we have
\begin{align*}
|I_D(B_i)| &= |I_D(C_i)| + |I_D(M_i)|\\
 & \geq \: |I_D(B_{i-1})| + \frac{\varepsilon}2 \Big(n_D - |I_D(B_{i-1})|\Big)
= |I_D(B_{i-1})| (1-\frac{\varepsilon}{2}) +  \frac{\varepsilon n_D}{2} \\
& \geq n_D \left(1 - \left(1-\frac{\varepsilon}{2}\right)^{i-1}\right) \left(1-\frac{\varepsilon}{2}\right) + \frac{\varepsilon n_D}{2}
= n_D \left(1 - \left(1-\frac{\varepsilon}{2}\right)^i\right)\:,
\end{align*}
and $V(P)$ does not intersect $I_D(B_i)$.
\end{proof}

\begin{remark}
Noting that $n - n_D < n^{1-\delta}$ for some fixed $\delta>0$
and being more careful in the calculations above shows that indeed
we have a.a.s.\ $\mathcal{L}_m \leq n \left(\log n\right)^{-\Omega(1)}$.
\end{remark}

\section{Lower bounds for a longest path}
\label{sec:longest_lower}

In this section we prove Theorem~\ref{thm:longest_lower}.
We first prove part (a), i.e., we give a deterministic lower bound for the length of a longest path in a RAN.
Recall that $\mathcal{L}_m$ denotes the number of vertices of a longest path in a RAN with $m$ faces.
 Let $G$ be a RAN with $m$ faces, and
let $v$ be the unique vertex that is adjacent to $\nu_1$, $\nu_2$, and $\nu_3$.
For $1\leq i \leq 3$, let $\triangle_i$ be the triangle with vertex set $\{v,\nu_1,\nu_2,\nu_3\} \setminus \{\nu_i\}$.
Define the random variable $\mathcal{L}'_m$ as the largest number $L$
such that for every permutation $\pi$ on $\{1,2,3\}$,
there is a path in $G$ of  $L$ edges from $\nu_{\pi(1)}$ to $\nu_{\pi(2)}$ not containing $\nu_{\pi(3)}$.
Clearly we have $\mathcal{L}_m\ge \mathcal{L}'_m  +2$.

\begin{proof}[Proof of Theorem~\ref{thm:longest_lower}(a).]
Let $\xi = \log 2 / \log 3$.
We prove by induction on $m$ that $\mathcal{L}'_m \geq m^{\xi}$.
This is obvious for $m=1$, so assume that $m>1$.
Let $m_i$ denote the number of faces in $\triangle_i$.
Then $m_1 + m_2 + m_3 = m$.
By symmetry, we may assume that $m_1 \geq m_2 \geq m_3$.
For any given $1\leq i\leq 3$, it is easy to find a path avoiding $\nu_i$
that connects the other two $\nu_j$'s by attaching two appropriate paths
in $\triangle_1$ and $\triangle_2$ at vertex $v$. (See Figures~\ref{fig:pathmerge}(a)--(c).)
By the induction hypothesis, these paths can be chosen to have lengths at least ${m_1}^{\xi}$ and ${m_2}^{\xi}$, respectively.
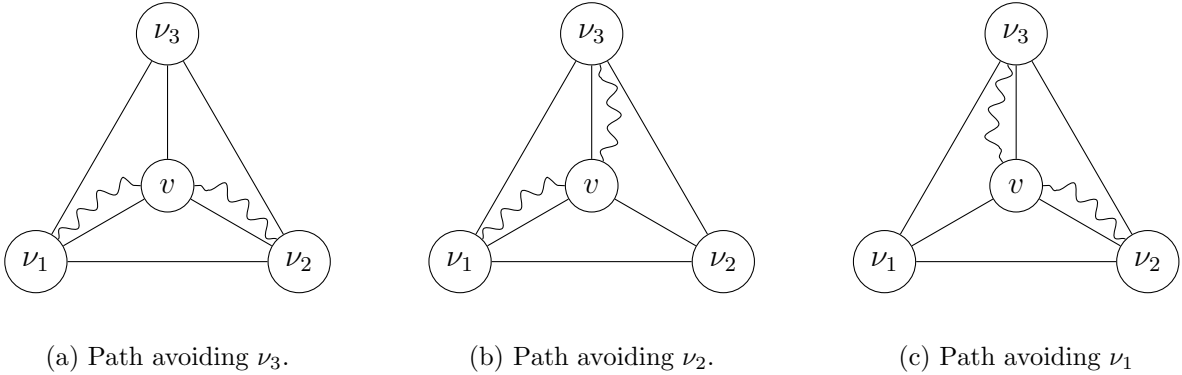
\begin{figure}
\centering
\subfigure[Path avoiding $\nu_3$.]{
\begin{tikzpicture}
\def \side {3.5cm}
\node[circle,draw,minimum size=.7cm] at (0,0) (nu1) {$\nu_1$};
\node[circle,draw,minimum size=.7cm] at (60:\side) (nu3) {$\nu_3$};
\node[circle,draw,minimum size=.7cm] at (0:\side) (nu2) {$\nu_2$};
\node[circle,draw,minimum size=.7cm] at (barycentric cs:nu1=1,nu2=1,nu3=1)
(v) {$v$};

\draw (nu1)--(nu2)--(nu3)--(nu1);
\draw (v)--(nu1);
\draw (v)--(nu2);
\draw (v)--(nu3);
\draw[decoration={snake},
         decorate] (nu1) to[out=45,in=180] (v);
\draw[decoration={snake},
         decorate] (nu2) to[out=135,in=0] (v);
\end{tikzpicture}
}
\hspace{.75cm}
\subfigure[Path avoiding $\nu_2$.]{
\begin{tikzpicture}

\def \side {3.5cm}

\node[circle,draw,minimum size=.7cm] at (0,0) (nu1) {$\nu_1$};
\node[circle,draw,minimum size=.7cm] at (60:\side) (nu3) {$\nu_3$};
\node[circle,draw,minimum size=.7cm] at (0:\side) (nu2) {$\nu_2$};
\node[circle,draw,minimum size=.7cm] at (barycentric cs:nu1=1,nu2=1,nu3=1) (v) {$v$};

\draw (nu1)--(nu2)--(nu3)--(nu1);
\draw (v)--(nu1);
\draw (v)--(nu2);
\draw (v)--(nu3);

\draw[decoration={snake},
         decorate] (nu1) to[out=45,in=180] (v);
\draw[decoration={snake},
         decorate] (v) to[out=65,in=-75] (nu3);
\end{tikzpicture}
}
\hspace{.75cm}
\subfigure[Path avoiding $\nu_1$]{
\begin{tikzpicture}

\def \side {3.5cm}

\node[circle,draw,minimum size=.7cm] at (0,0) (nu1) {$\nu_1$};
\node[circle,draw,minimum size=.7cm] at (60:\side) (nu3) {$\nu_3$};
\node[circle,draw,minimum size=.7cm] at (0:\side) (nu2) {$\nu_2$};
\node[circle,draw,minimum size=.7cm] at (barycentric cs:nu1=1,nu2=1,nu3=1) (v) {$v$};

\draw (nu1)--(nu2)--(nu3)--(nu1);
\draw (v)--(nu1);
\draw (v)--(nu2);
\draw (v)--(nu3);

\draw[decoration={snake},
         decorate] (nu3) to[out=-105,in=120] (v);
\draw[decoration={snake},
         decorate] (nu2) to[out=135,in=0] (v);
\end{tikzpicture}
}

\caption{Paths avoiding $\triangle_3$ and one of  the  $\nu_i$'s.}
\label{fig:pathmerge}
\end{figure}
Hence for every permutation $\pi$ of $\{1,2,3\}$,
there is a path from $\nu_{\pi(1)}$ to $\nu_{\pi(2)}$ avoiding $\nu_{\pi(3)}$ with length at least
\begin{equation}
\label{eq:m1+m2}
{m_1}^{\xi} + {m_2}^{\xi}\:.
\end{equation}
It is easily verified that since $m_1 \geq m_2 \geq m_3$ and $m_1 + m_2 + m_3 = m$,
the minimum of (\ref{eq:m1+m2}) happens when $m_1 = m_2 = m / 3$, thus
$$\mathcal{L}'_m \geq {m_1}^{\xi} + {m_2}^{\xi} \geq 2 \left(\frac{m}{3}\right)^{\xi} = m^{\xi}\:,$$
and the proof is complete.
\end{proof}

Next, we use the same idea to give a larger lower bound for $\e{\mathcal{L}_m}$.
Let the random variable $X_i$ denote the number of faces in $\triangle_i$.
Then the $X_i$'s have the same distribution and are not independent.
 It follows from Theorem~\ref{thm:beta} that  as $m$ grows, the distribution of $\frac{X_i}{m}$ converges pointwise
to that of  $\operatorname{Beta}(1/2,1)$.
Moreover, for any fixed $\varepsilon \in [0,1)$, if we condition on $X_1 = \varepsilon m$,
then the subdividing process inside $\triangle_2$ and $\triangle_3$ can be modelled as an Eggenberger-P\'{o}lya urn again,
and it follows from Theorem~\ref{thm:beta} that the distribution of $\frac{X_2}{(1-\varepsilon)m}$
conditional on $X_1 = \varepsilon m$
converges pointwise to that of
 {$\operatorname{Beta}(1/2,1/2)$}.
 Namely, for any fixed $\varepsilon \in [0,1)$ and $\delta \in [0,1]$,
\begin{equation}
\label{eq:secondtriangledistribution}
\lim_{m\rightarrow \infty} \p{\frac{X_2}{(1-\varepsilon)m} \leq \delta {\left\vert\vphantom{\frac{1}{1}}\right.} X_1 = \varepsilon m}
= \int_{0}^{\delta} \frac{\Gamma(1)}{\Gamma({1}/{2})^2} \: x^{-1/2} (1-x)^{-1/2}\: \mathrm{d}x \:.
\end{equation}

We are now ready to prove part (b) of Theorem~\ref{thm:longest_lower}.

\begin{proof}[Proof of Theorem~\ref{thm:longest_lower}(b).]
Let $\zeta = 0.88$.
We prove that there exists a constant $\kappa>0$ such that
$\e{\mathcal{L}'_m} \ge \kappa m^{\zeta}$ holds for all $m\ge 1$.
We proceed by induction on $m$, with the induction base being $m=m_0$,
where $m_0$ is a sufficiently large constant, to be determined later.
By choosing $\kappa$ sufficiently small, we may assume $\e{\mathcal{L}'_m} \ge \kappa m^{\zeta}$ for all $m\leq m_0$.

For $1\leq i \leq 3$, let $X_i$ denote the number of faces  in $\triangle_i$.
Define  a   permutation $\sigma$   on $\{1,2,3\}$ such that
$X_{\sigma(1)}\ge X_{\sigma(2)}\ge X_{\sigma(3)}$, breaking ties randomly.
Then $\sigma$ is a random permutation determined by the $X_i$ and the random choice in the tie-breaking.
By symmetry,  for every fixed $\sigma'\in S_3$, $\p{\sigma=\sigma'}= 1/6$.
From the proof of part (a), we know
$$
\mathcal{L}'_m\ge \mathcal{L}'_{X_{\sigma(1)}}+\mathcal{L}'_{X_{\sigma(2)}}.
$$
Taking the expectation on both sides, we have
\begin{equation}
\e{\mathcal{L}'_m}\ge \e{\mathcal{L}'_{X_{\sigma(1)}}+\mathcal{L}'_{X_{\sigma(2)}}}\ge 6\e{(\mathcal{L}'_{X_{1}}+\mathcal{L}'_{X_{2}})\mathds{1}_{X_1> X_2> X_3}}\:,\label{eq:uniformProb}
\end{equation}
where the second inequality holds by symmetry and as $\p{\sigma=(1,2,3)}= 1/6$.
By the induction hypothesis, for every $x_1,x_2<m$,
$$
\e{\mathcal{L}'_{X_{1}}\mid X_1=x_1}\ge \kappa x_1^{\zeta}, \mathrm{\ and\ }\e{\mathcal{L}'_{X_{2}}\mid X_2=x_2}\ge \kappa x_2^{\zeta}.
$$
Hence,
\begin{equation}
\e{(\mathcal{L}'_{X_{1}}+\mathcal{L}'_{X_{2}})\mathds{1}_{X_1> X_2> X_3}}\ge \kappa\e{(X_1^{\zeta}+X_2^{\zeta})\mathds{1}_{X_1> X_2> X_3}}.\label{eq:induct}
\end{equation}
Let $f_1(x)$ and $f_2(x)$ denote the probability density functions
of $\operatorname{Beta}(1/2,1)$ and $\operatorname{Beta}(1/2,1/2)$, respectively.
Namely,
$$
f_1(x)=\frac{\Gamma(3/2)}{\Gamma(1)\Gamma(1/2)}\:x^{{-1}/{2}}\mathrm{\ and\ } f_2(x)=\frac{\Gamma(1)}{\Gamma({1}/{2})^2} \: x^{-1/2} (1-x)^{-1/2}\:.
$$
Then  it follows from Theorem~\ref{thm:beta} that for any fixed $0\leq t < 1$,
$$
\lim_{m\rightarrow \infty} \p{\frac{X_1}{m} \leq t}
= \int_{0}^{t} f_1(x)\: \mathrm{d}x \:,
$$
and for any fixed $0\leq s \leq 1$, by (\ref{eq:secondtriangledistribution}),
$$
\lim_{m\rightarrow \infty} \p{\frac{X_2}{m} \leq (1-t)s \cond X_1 = t m}=\lim_{m\rightarrow \infty} \p{\frac{X_2}{(1-t)m} \leq s \cond \frac{X_1}{m} = t }
= \int_{0}^{s} f_2(x)\: \mathrm{d}x \:.
$$
 Hence (see Billingsley~\cite[Theorem 29.1 (i)]{ref.Billingsley})
\begin{align*}
\e{\left ( \left(\frac{X_1}{m}\right)^{\zeta} + \left(\frac{X_2}{m}\right) ^{\zeta} \right) \mathds{1}_{X_1 >X_2 > X_3}}& \rightarrow
\int \limits_{t=1/3}^{1} \! \! \int \limits_{s=1/2}^{\min\left\{1, \frac{t}{1-t}\right\}}\!  \left[ t^{\zeta} + (s(1-t))^{\zeta}\right]   f_1(t)f_2(s)\,\mathrm{d}s\,\mathrm{d}t\,,
\end{align*}
as $m\to\infty$.
By the choice of $\zeta$, we have
$$\int_{t=1/3}^{1} \int_{s=1/2}^{\min\{1, \frac{t}{1-t}\}}  \left[t^{\zeta} + (s(1-t))^{\zeta}\right]  f_1(t)f_2(s)   \:\mathrm{d}s\:\mathrm{d}t > 1/6 \:.$$
Then, by~(\ref{eq:uniformProb}) and~(\ref{eq:induct}),
$$
\e{\mathcal{L}'_m}\ge 6\kappa\e{(X_1^{\zeta}+X_2^{\zeta})\mathds{1}_{X_1> X_2> X_3}}> \kappa m^{\zeta},
$$
if we choose $m_0$ sufficiently large.
\end{proof}

\section{Diameter}
\label{sec:diameter}
As mentioned in the introduction,  prior to this work
it had been known that a typical RAN has logarithmic diameter,
and asymptotic lower and upper bounds for the diameter had been proved,
but the asymptotic value had not been determined.
In this section we prove Theorem~\ref{thm:diameter},
which states that a.a.s.\ the diameter of a RAN is asymptotic to $c \log n$,
where $c \approx 1.668$ is the solution of an explicit equation.

Let $G$ be a RAN with $n$ vertices, and recall that
$\nu_1$, $\nu_2$, and $\nu_3$ denote the vertices incident with the unbounded face.
For a vertex $v$ of $G$, let $\tau(v)$ be the minimum graph distance of $v$ to the boundary, i.e.,
$$\tau(v) = \min \{\dist(v,\nu_1),\dist(v,\nu_2),\dist(v,\nu_3)\} \:.$$
The \emph{radius} of $G$ is defined as the maximum of $\tau(v)$ over all vertices $v$.

\begin{lemma}
\label{lem:radius}
Let
$$ h(x)=\frac{12x^3}{1-2x} - \frac{6x^3}{1-x} \:,$$
and let $\hat x$ be the unique solution in $(0.1,0.2)$ to
$$x(x-1)h'(x)=h(x)\log h(x) \:.$$
Finally, let
$$c = \frac { 1-\hat x^{-1}} {\log h(\hat x)} \approx 1.668 \:.$$
Then the radius of $G$ is a.a.s.\ asymptotic to $c \log n / 2$.
\end{lemma}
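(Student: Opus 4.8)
The plan is to reduce the problem of estimating the radius of $G$ to that of estimating the height of a suitably defined random tree, and then to invoke the general result of Broutin and Devroye~\cite{treeheight} on heights of random trees. First I would set up the tree model. Given a RAN $G$ with $\triangle$-tree $T$, one builds a new tree $\calT$ whose nodes correspond to triangles of $G$ together with a ``distance-to-boundary'' label: the root is $\nu_1\nu_2\nu_3$ with label $0$, and when a triangle $\triangle$ with vertices $a,b,c$ (carrying boundary-distance labels $\alpha\le\beta\le\gamma$) is subdivided by a new vertex $v$, the vertex $v$ receives label $\min\{\alpha,\beta,\gamma\}+1=\alpha+1$ (since $v$ is joined to all three vertices of $\triangle$, and its distance to the boundary is one more than the best of its three neighbours), and $\calT$ acquires three children for $\node{\triangle}$, corresponding to the three sub-triangles $vab$, $vbc$, $vca$, whose vertex-label triples are obtained from $(\alpha,\beta,\gamma)$ by replacing one coordinate with $\alpha+1$. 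The key structural observation is that, up to an additive constant, $\tau(v)$ equals the label assigned to $v$ at the moment it is inserted: distances in a RAN propagate only through the nested triangle structure, so a vertex inserted inside $\triangle$ can reach the boundary at least as fast as via the vertices of $\triangle$, and essentially no faster. Thus the radius of $G$ is (asymptotically) the maximum label appearing in $\calT$, i.e.\ the height of $\calT$ under an appropriate edge-length assignment where moving from $\node{\triangle}$ to a child costs $1$ if the child's new vertex achieves a new minimum label and $0$ otherwise.

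Next I would analyze the label dynamics. Because the three sub-triangles are symmetric and a uniformly random face of $G$ is subdivided at each step, the subtree of $\calT$ rooted at any $\node{\triangle}$ is itself distributed (conditionally on the number of faces inside $\triangle$) as a copy of the whole process started from a triangle with the shifted labels. The relevant quantity is how the sorted label-triple $(\alpha,\beta,\gamma)$ evolves along a random root-to-leaf path: with the branching weights governed by the Eggenberger--P\'olya / Corollary~\ref{cor:fairness} heuristics (each sub-triangle inherits a $\operatorname{Beta}$-distributed fraction of the faces), one obtains a multi-type branching random walk. The generating function $h(x)=\frac{12x^3}{1-2x}-\frac{6x^3}{1-x}$ should arise precisely as the (Laplace-transform-type) mean number of triangles at a given ``label depth'' weighted by $x$ raised to the number of faces, after encoding the recursion $(\alpha,\beta,\gamma)\mapsto$ three children with one coordinate bumped; the two terms with denominators $1-2x$ and $1-x$ reflect inclusion--exclusion over which of the (at most two) coordinates below $\gamma$ get incremented. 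The theorem of~\cite{treeheight} then gives that the height is a.a.s.\ asymptotic to $\rho\log n$, where $\rho$ is determined by the variational/saddle-point condition; carrying out the Legendre-transform optimization of $\log h(x)$ against the growth rate yields exactly the stationarity equation $x(x-1)h'(x)=h(x)\log h(x)$ with solution $\hat x\approx 0.163$, and the height constant $c/2=(1-\hat x^{-1})/\bigl(2\log h(\hat x)\bigr)$. Since a RAN on $n$ vertices has $m=2n-5$ faces, the factor $2$ between $\log m$ and the statement's $\log n/2$ is absorbed cleanly.

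I would organize the write-up as: (1) define $\calT$ and the label recursion; (2) prove the two-sided bound $|\tau(v)-\mathrm{label}(v)|=O(1)$, deriving it from the fact that a shortest path from $v$ to the boundary, the first time it exits the innermost triangle containing $v$, must pass through a vertex of that triangle, combined with an easy lower bound showing each triangle nesting costs at least roughly one unit on average; (3) verify that $\calT$ falls into the framework of~\cite{treeheight} — i.e.\ identify the split distribution, check the required moment and non-lattice conditions, and compute the associated mean function, showing it equals $h$; (4) apply the height theorem and solve the resulting equation to extract $c$. The main obstacle will be step (3): correctly identifying which random-tree model of~\cite{treeheight} applies and rigorously matching the combinatorial recursion on sorted label-triples to the analytic object $h(x)$, including justifying the passage from the discrete Eggenberger--P\'olya splitting to the limiting Beta weights that make the branching-random-walk analysis exact rather than heuristic. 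A secondary technical point is controlling the $O(1)$ discrepancy in step (2) uniformly, and showing it does not affect the leading-order constant; this should follow because the height is $\Theta(\log n)$ while the discrepancy is bounded, but it must be stated carefully to get the ``a.a.s.\ asymptotic to'' conclusion with the quantifier over all $\varepsilon>0$.
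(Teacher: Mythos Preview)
Your proposal shares the paper's high-level structure --- encode the radius as an ``auxiliary height'' of the $\triangle$-tree (counting increments of the minimum boundary-distance along a root-to-leaf path) and then invoke Broutin--Devroye --- but the technical core diverges from the paper and has a real gap at exactly the point you flag as the main obstacle.

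The paper does not use P\'olya/Beta splits for this lemma at all. Instead it (i)~classifies triangles into three \emph{types} according to the pattern of $\tau$-values on their vertices, so that the auxiliary depth is simply the number of type-1 nodes on the path to the root; (ii)~embeds the $\triangle$-tree in continuous time by giving each node an independent $\operatorname{Exp}(1)$ lifetime, which is what makes the branches independent and places the problem in the specific Broutin--Devroye framework (their Theorem~1, stated here as Theorem~\ref{thm:broutin-devroye}) with i.i.d.\ \emph{strictly positive} edge labels; (iii)~collapses the tree onto its type-1 nodes, so that an edge between consecutive type-1 nodes carries a $\operatorname{Gamma}(i)$ label; and (iv)~because the gap $i$ between consecutive type-1 nodes is unbounded and the collapsed tree is therefore not $b$-ary, truncates at depth $k$, constructs explicit lower and upper trees $\underline{T_k}$ and $\overline{T_k}$ via a coupling, and lets $k\to\infty$. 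The function $h(x)=\sum_{i\ge 3}\alpha_i x^i$ with $\alpha_i=3\cdot 2^{i-1}-6$ falls out of the linear recursion for the number of type-1 descendants at depth $i$ below a type-1 root, not from an inclusion--exclusion on the sorted triple.

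Your P\'olya/Beta route naturally feeds into the \emph{split-tree} side of the height theory, but that machinery yields the ordinary height of $T$ (this is essentially how~\cite{first} obtains the constant $\eta_2\approx 7.081$), not the auxiliary height. To count only the label-incrementing steps you would need $0/1$ edge costs, and Theorem~\ref{thm:broutin-devroye} explicitly requires $\p{E=0}=0$, so a direct application fails; this is precisely why the paper collapses onto type-1 nodes and then needs the $k$-truncation sandwich. Your proposal names step~(3) as the crux but does not supply the three missing ingredients (continuous-time embedding, collapse to type-1 nodes, truncation/sandwich with $k\to\infty$), and the Beta-weight idea does not obviously circumvent them. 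One minor point: your step~(2) is easier than you anticipate --- the paper observes that $\tau(v)$ is \emph{exactly} $\tau(\triangle)+1$ when $v$ subdivides $\triangle$, so the radius equals the auxiliary height of $T$ up to $\pm 1$ with no asymptotic slack needed.
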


We first show that this lemma implies Theorem~\ref{thm:diameter}.

\begin{proof} [Proof of Theorem~\ref{thm:diameter}]
 Let $\triangle_1$, $\triangle_2$, and $\triangle_3$ be the three triangles in the standard 1-subdivision of the triangle $\nu_1\nu_2\nu_3$,
and let $n_i$ be the number of vertices on and inside $\triangle_i$.
Let $\diam(G)$ denote the diameter of $G$.
Fix arbitrarily small $\varepsilon, \delta > 0$.
We show that
with probability at least $1-2\delta$ we have
$$(1-\varepsilon) c \log n \leq \diam(G) \leq (1+\varepsilon) c\log n \:.$$
{Here and in the following, we assume $n$ is sufficiently large.}

Let $M$ be a positive integer sufficiently large that, for  a given $1\leq i\leq 3$,
$$ \p{\frac{n_i}{n} < \frac{1}{M}} < \delta / 6 \:.$$
Such an $M$ exists by Theorem~\ref{thm:beta} and the discussion after it.
Let $A$ denote the event
$$ \min\left\{ \frac{n_i}{n} : 1\leq i \leq 3 \right\} \geq \frac{1}{M} \:.$$
By the union bound, $\p{A} \geq 1- \delta / 2$.
We condition on values $(n_1,n_2,n_3)$ such that $A$ happens.
 Note that we have $\log n_i = \log n - O(1)$ for each $i$.

For a triangle $\triangle$, $V(\triangle)$ denotes the three vertices of $\triangle$.
Note that for $1\leq i \leq 3$, the subgraph induced by vertices on and inside $\triangle_i$
is  distributed as  a RAN $G_i$ with $n_i$ vertices.
Hence by Lemma~\ref{lem:radius} and  the union bound,
with probability at least $1-\delta /2$,
the radius of  each of $G_1$, $G_2$ and $G_3$ is at least $(1-\varepsilon) c \log n / 2$.
Hence, with probability at least $1-\delta /2$
there exists $u_1 \in V(G_1)$ with distance at least
$(1-\varepsilon) c \log n / 2$ to $V(\triangle_1)$,
and also there exists $u_2 \in V(G_2)$ with distance at least
$(1-\varepsilon) c \log n / 2$ to $V(\triangle_2)$.
Since any $(u_1,u_2)$-path must contain a vertex from $V(\triangle_1)$ and $V(\triangle_2)$,
with probability at least $1 - \delta/2$,
there exists $u_1, u_2 \in V(G)$ with distance at least $2 (1-\varepsilon) c \log n / 2$,
which implies
$$\p{\diam(G) \geq  c(1-\varepsilon) \log n} \geq
\p{\diam(G) \geq  c (1-\varepsilon) \log n \vert A} \p{A}
> 1 - \delta \:.$$

For the upper bound, let $R$ be the radius of $G$.
Notice that the distance between any vertex and  $\nu_1$ is at most $R+1$,
so $\operatorname{diam}(G)\le 2R+2$.
By Lemma~\ref{lem:radius}, with probability at least $1 - \delta$ we have
$R \le (1+\varepsilon/2)c \log n / 2$. If this event happens, then
$\operatorname{diam}(G) \le (1+\varepsilon)c \log n$.
\end{proof}

The rest of this section is devoted to the proof of Lemma~\ref{lem:radius}.
Let $T$ be the  $\triangle$-tree of $G$, as defined in Section~\ref{sec:preliminaries}.
We categorize the triangles in $G$ into three types.
Let $\triangle$ be a triangle in $G$ with vertex set $\{x,y,z\}$,
and assume that $\tau(x) \leq \tau(y) \leq \tau(z)$.
Since $z$ and $x$ are adjacent, we have $\tau(z) \leq \tau(x) + 1$.
So, ${\triangle}$ can be categorized to be of one of the following types:
\begin{enumerate}
\item if $\tau(x) = \tau(y) = \tau(z)$, then say ${\triangle}$ is of type 1.
\item If $\tau(x) = \tau(y) < \tau(y) + 1 = \tau(z)$, then  say ${\triangle}$ is of type 2.
\item If $\tau(x) < \tau(x) + 1 = \tau(y) = \tau(z)$, then  say ${\triangle}$ is of type 3.
\end{enumerate}
The type of a node of $T$ is the same as the type of its corresponding triangle.
The root of $T$ corresponds to the triangle $\nu_1\nu_2\nu_3$
and the following are easy to observe.
\begin{enumerate}
\item[(a)] The root is of type 1.
\item[(b)] A node of type 1 has three children of type 2.
\item[(c)] A node of type 2 has one child of type 2 and two children of type 3.
\item[(d)] A node of type 3 has two children of type 3 and one child of type 1.
\end{enumerate}

For a triangle $\triangle$, define $\tau(\triangle)$ to be the minimum of $\tau(u)$  over  all $u\in V(\triangle)$.
Then it is easy to observe that, for two triangles $\overline{\triangle}$ and $\triangle$
of type 1 such that $\node{\overline{\triangle}}$ is an ancestor of $\node{\triangle}$
and there is no node of type 1 in the unique path connecting them,
we have $\tau({\triangle}) = \tau(\overline\triangle) + 1$.
This determines $\tau$ inductively:  for every $\node{\triangle} \in V(T)$, $\tau(\triangle)$ is one less than
the number of  nodes of type 1 in the path from $\node{\triangle}$ to the root.
We call  $\tau(\triangle)$ the \emph{auxiliary depth} of node $\triangle$,
and define the \emph{auxiliary height} of a tree $T$, written $\ah(T)$,
to be the maximum auxiliary depth of its nodes.
Note that the auxiliary height is always less than or equal to the height.
Also, for a vertex $v \in V(G)$,
if $\triangle$ is the triangle that $v$ subdivides,
then $\tau(v) = \tau(\triangle)+1$.
We augment the tree $T$ by adding specification of the type of
each node,
and we abuse notation and call the augmented tree the $\triangle$-tree of the RAN.
Hence,  the radius of the RAN is either $\ah(T)$ or $\ah(T)+1$.

Notice that instead of building $T$ from the RAN $G$,
one can think of the random $T$ as being generated in the following manner:
let $n\ge 3$ be a positive integer. Start with a single node as the root of $T$.
So long as the number of nodes is less than $3n-8$,
choose a leaf $v$ independently of previous choices and uniformly at random,
and add three leaves as children of $v$.
Once the number of nodes becomes $3n-8$,
add the information about the types using rules (a)--(d), as follows.
Let the root have type 1, and determine the types of other nodes in a top-down manner.
For a node of type 1, let its children have type 2.
For a node of type 2, select one of the children independently and uniformly at random,
let that child have type 2, and let the other two children have type 3.
Similarly, for a node of type 3, select one of the children independently of previous choices and uniformly at random,
let that child have type 1, and let the other two children have type 3.
Henceforth, we will forget about $G$ and focus on finding the auxiliary height of a random tree $T$ generated in this manner.

A major difficulty in analyzing the auxiliary height of the tree generated in the aforementioned manner
is that the branches of a node are heavily dependent, as the total number of nodes equals $3n-8$.
To remedy this we consider another process which has the desired independence and approximates the
original process well enough for our purposes.
The process, $\widehat{P}$,
starts with a single node, the root, which is born at time 0, and is of type 1.
From this moment onwards, whenever a node is born (say at time
$\kappa$), it waits for a random time $X$, which is distributed
exponentially with mean 1, and after time $X$ has passed (namely, at
absolute time $\kappa + X$) gives birth to three children, whose types
are determined as before (according to the rules  (b)--(d), and using
randomness whenever there is a choice) and dies. Moreover, the
lifetime of the nodes are independent.
 By the memorylessness of the exponential distribution, if
one starts looking at the process at any (deterministic) moment, the next leaf to die is
chosen uniformly at random.
For a nonnegative (possibly random) $t$,
we denote by ${\widehat{T}}^t$ the random almost surely finite tree obtained by taking a snapshot of this process at time $t$.
Hence, for any deterministic $t \geq 0$,
the distribution of ${\widehat{T}}^t$ conditional on
${\widehat{T}}^t$ having exactly $3n-8$ nodes, is the same as the
distribution of $T$.

\begin{lemma}
\label{lem:equal_logs}
Assume that there exists a constant $c$ such that
a.a.s.\ the auxiliary height of $\widehat{T}^t$ is asymptotic to $c t$ as $t\to\infty$.
Then the radius of a RAN with $n$ vertices is a.a.s.\ asymptotic to $c \log n / 2$ as $n\to\infty$.
\end{lemma}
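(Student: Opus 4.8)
The plan is to read the radius of the RAN off the auxiliary height of $\widehat T^t$ by controlling the random time $\tau_n$ at which $\widehat P$ first has $3n-8$ nodes, and then sandwiching. First I would pin down $\tau_n$. Producing $T$ --- equivalently, running $\widehat P$ until it has $3n-8$ nodes --- takes exactly $n-3$ branching events, each turning one leaf into an internal node with three leaf children, so the number of leaves just before the $j$-th branching is $2j-1$ for $j=1,\dots,n-3$. By memorylessness of the exponential clocks, the time between the $(j-1)$-st and the $j$-th branching is an independent $\operatorname{Exp}(2j-1)$ variable, whence $\tau_n=\sum_{j=1}^{n-3}E_j$ with the $E_j$ independent and $E_j\sim\operatorname{Exp}(2j-1)$. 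I would then compute $\e{\tau_n}=\sum_{j=1}^{n-3}\frac1{2j-1}=\tfrac12\log n+O(1)$ and $\operatorname{Var}(\tau_n)=\sum_{j=1}^{n-3}\frac1{(2j-1)^2}=O(1)$, so that Chebyshev's inequality gives, for every fixed $\varepsilon>0$, that a.a.s.\ $(1-\varepsilon)\tfrac12\log n\le\tau_n\le(1+\varepsilon)\tfrac12\log n$.

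Next I would fix $\varepsilon>0$, set $t^-=(1-\varepsilon)\tfrac12\log n$ and $t^+=(1+\varepsilon)\tfrac12\log n$, and note that both tend to infinity, so by hypothesis a.a.s.\ $\ah(\widehat T^{t^-})\ge(1-\varepsilon)ct^-$ and $\ah(\widehat T^{t^+})\le(1+\varepsilon)ct^+$. The structural fact I would use is that $t\mapsto\ah(\widehat T^t)$ is nondecreasing: in $\widehat P$ the type --- and hence the auxiliary depth --- of a node is fixed at its birth, so the tree only grows and auxiliary depths never drop. Intersecting the three a.a.s.\ events (a union bound suffices; no independence of $\tau_n$ from the tree shape is needed), monotonicity gives $(1-\varepsilon)ct^-\le\ah(\widehat T^{\tau_n})\le(1+\varepsilon)ct^+$, i.e.\ $(1-\varepsilon)^2\tfrac c2\log n\le\ah(\widehat T^{\tau_n})\le(1+\varepsilon)^2\tfrac c2\log n$. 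Since $\widehat T^{\tau_n}$ is exactly the tree output by $\widehat P$ upon reaching $3n-8$ nodes, it has the distribution of $T$ (this is the time-stripped description of $T$ recalled above), so for every fixed $\varepsilon>0$ we get a.a.s.\ $(1-\varepsilon)^2\le\ah(T)/(\tfrac c2\log n)\le(1+\varepsilon)^2$. Letting $\varepsilon\to0$ shows a.a.s.\ $\ah(T)\sim\tfrac c2\log n$, and since the radius of the RAN equals $\ah(T)$ or $\ah(T)+1$, the radius is a.a.s.\ asymptotic to $\tfrac c2\log n$, as claimed.

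Most of this is bookkeeping: the two harmonic-type sums, the monotonicity of $\ah(\widehat T^{\cdot})$, and the identification $\widehat T^{\tau_n}\stackrel{d}{=}T$ all follow immediately from the definitions. The one place that needs care is passing from the hypothesis, which is stated at deterministic times, to the random evaluation time $\tau_n$; I expect this to be the main (mild) obstacle, and it is handled entirely by the concentration of $\tau_n$ within a $(1\pm\varepsilon)$ factor of $\tfrac12\log n$ together with the monotonicity of $\ah$, leaving only careful handling of the $\varepsilon$'s and the various a.a.s.\ events.
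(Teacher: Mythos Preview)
Your proposal is correct and follows essentially the same sandwiching argument as the paper: bracket the random hitting time between two deterministic times $t^\pm$, apply the hypothesis at $t^\pm$, and use monotonicity of $t\mapsto\ah(\widehat T^t)$ together with the identification $\widehat T^{\tau_n}\stackrel{d}{=}T$. The only difference is in how the bracketing is established: you compute $\tau_n=\sum_{j=1}^{n-3}E_j$ with $E_j\sim\operatorname{Exp}(2j-1)$ explicitly and apply Chebyshev, whereas the paper invokes Broutin and Devroye's result that $\log|V(\widehat T^t)|\sim 2t$ almost surely to deduce $a_1<A_2<a_3$ a.a.s. Your route is more self-contained (it avoids the external citation), while the paper's is slightly shorter; otherwise the arguments are the same.
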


\begin{proof}
Let $\ell_n = 3n - 8$, and let $\varepsilon>0$ be fixed.
For the process $\widehat{P}$, we define three stopping times as follows:
\begin{description}
\item $a_1$ is the deterministic time $(1-\varepsilon) \log (\ell_n) / 2$.
\item $A_2$ is the random time when the evolving tree has exactly $\ell_n$ nodes.
\item $a_3$ is the deterministic time $(1+\varepsilon) \log (\ell_n) / 2$.
\end{description}

Broutin and Devroye~\cite[Proposition~2]{treeheight} proved that
almost surely
$$\log |V(\widehat{T}^t)| \sim 2t\:,$$
which implies the same statement a.a.s.\ as $t\to\infty$.
This means that, as $n\to\infty$, a.a.s.
$$\log |V(\widehat{T}^{a_1})| \sim 2a_1 = (1 - \varepsilon) \log (\ell_n) \:,$$
and hence  $|V(\widehat{T}^{a_1})| < \ell_n$, which implies $a_1 < A_2$.
Symmetrically, it can be proved that a.a.s.\ as $n\to\infty$ we have $A_2 < a_3$.
It follows that a.a.s.\ as $n\to\infty$
$$\ah\left(\widehat{T}^{a_1}\right) \leq \ah \left(\widehat{T}^{A_2}\right) \leq \ah \left(\widehat{T}^{a_3}\right) \:.$$

By the assumption, a.a.s.\ as $n\to\infty$ we have $\ah\left(\widehat{T}^{a_1}\right) \sim (1-\varepsilon) c \log (\ell_n) / 2$
and
$\ah\left(\widehat{T}^{a_3}\right) \sim (1+\varepsilon) c \log (\ell_n) / 2$.
On the other hand, as noted above, $T$ has the same distribution as $\widehat{T}^{A_2}$.
It follows that a.a.s.\ as $n\to\infty$
$$ 1 - 2\varepsilon \leq \frac{2\ah(T)}{c \log (\ell_n)} \leq 1 + 2\varepsilon \:.$$
Since $\varepsilon$ was arbitrary, the result follows.
\end{proof}

It will be more convenient to view the process $\widehat{P}$ in the following equivalent way.
Denote by $\operatorname{Exp}(1)$ an exponential random variable with mean 1.
Let $\widehat{T}$ denote an infinite ternary tree whose nodes have types assigned using rules  (a)--(d) and
are associated with independent $\operatorname{Exp}(1)$ random variables.
For convenience, each edge of the tree from a parent to a child is labelled with the  random variable associated with the parent, which denotes the age of the parent when the child is born.
For every node $u \in V(\widehat{T})$, its \emph{birth time} is defined as the
sum of the labels on the edges connecting $u$ to the root,
and the birth time of the root is defined to be zero.
Given $t\geq 0$, the  tree $\widehat{T}^t$ is the  subtree induced by nodes
 with birth time  less than or equal to $t$,
and is finite with probability one.

Let $k\ge 3$ be a fixed positive integer.
We define two {random} infinite trees $\underline{T_k}$ and $\overline{T_k}$ as follows.
First, we regard $\widehat{T}$ as a tree generated by each node giving birth to exactly three children with types assigned using  (b)--(d),
and with an $\operatorname{Exp}(1)$ random variable used to label the edges to its children.
The tree $\underline{T_k}$ is obtained using the same generation rules as $\widehat{T}$ except that every  node   of type 2 or 3, whose
distance to its closest ancestor of type 1 is equal to $k$,
dies without giving birth to any children.
Given $t\geq 0$, the  random  (almost surely finite) tree $\underline{T^t_k}$ is, as before, the subtree of $\underline{T_k}$ induced
by nodes  with birth time less than or equal to $t$.
The tree $\overline{T_k}$ is also generated similarly to $\widehat{T}$,
except that for each node $u$ of type 2 (respectively, 3) in $\overline{T_k}$ whose
distance to its closest ancestor of type 1 equals $k$,
$u$ has exactly three (respectively, four) children of type 1,
and the edges joining $u$ to its children get label 0
instead of random ${\operatorname{Exp}}(1)$ labels.
(In the ``evolving tree'' interpretation,
$u$ immediately gives birth to three or four children of type 1 and dies.)
Such a node $u$ is called an \emph{annoying} node.
The  random (almost surely finite) tree $\overline{T^t_k}$ is defined as before.

\begin{lemma}
\label{lem:sandwich}
For every  fixed $k\ge 3$, every $t\geq 0$, and every $g=g(t)$, we have
$$ \p{\ah \left ( \underline{T^t_k}\right) \geq g} \leq \p{ \ah \left ( \widehat{T}^t \right ) \geq g}
\leq \p{ \ah \left ( \overline{T_k^t} \right) \geq g} \:.$$
\end{lemma}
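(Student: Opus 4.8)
Both inequalities follow from couplings, the left-hand one being immediate. Couple $\underline{T_k}$ and $\widehat{T}$ on a single probability space by driving the two generation procedures with the same randomness: the same $\operatorname{Exp}(1)$ label on each pair of corresponding edges, and the same uniform choices whenever a node's children receive their types. Under this coupling $\underline{T_k}$ is obtained from $\widehat{T}$ simply by deleting, for every node $u$ of type $2$ or $3$ whose distance to its closest type-$1$ ancestor is exactly $k$, the subtrees rooted at the children of $u$ (that is, by making $u$ a leaf). Hence the node set of $\underline{T_k}$ is contained in that of $\widehat{T}$; every node lying in both trees has the same root-to-node path, hence the same sequence of ancestor types, the same number of type-$1$ ancestors, the same value of $\tau$, and the same birth time. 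Therefore $\ah(\underline{T_k^t}) \le \ah(\widehat{T}^t)$ holds surely for every $t\ge 0$, which gives the first inequality.

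For the right-hand inequality I would construct a coupling of $\widehat{T}$ and $\overline{T_k}$ under which, for every $t\ge 0$, every root-to-node path of $\widehat{T}^t$ is ``shadowed'' by a root-to-node path of $\overline{T_k^t}$ carrying at least as many type-$1$ nodes. Since $\ah$ of a tree equals the maximum, over its root-to-node paths, of the number of type-$1$ nodes on the path, minus one (by the inductive description of $\tau$ in terms of type-$1$ ancestors recorded just before the lemma), this forces $\ah(\widehat{T}^t) \le \ah(\overline{T_k^t})$ surely, hence the claimed probability inequality. The shadow path is built step by step while maintaining two invariants: (i) the current shadow node has birth time no larger than that of the current $\widehat{T}$-node, and (ii) the shadow path so far contains at least as many type-$1$ nodes as the $\widehat{T}$-path so far. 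At each step the $\operatorname{Exp}(1)$ label used on $\widehat{T}$'s next edge is reused on the shadow's next edge. So long as $\widehat{T}$'s current block has not yet reached distance $k$ below its type-$1$ ancestor, the shadow copies the block verbatim (same branch, same label). When $\widehat{T}$ is still inside a block at distance $k$, the corresponding node of $\overline{T_k}$ is annoying and all its children are type $1$, reached through label-$0$ edges; the shadow then jumps to one of these type-$1$ children, reaching a type-$1$ node no later than $\widehat{T}$'s current node, since the shadow's elapsed time on that block is a partial sum of $\widehat{T}$'s, while $\widehat{T}$ continues its longer block. From that point on the shadow is its own path in $\overline{T_k}$, coupled to $\widehat{T}$'s path only through the shared $\operatorname{Exp}(1)$ labels: whenever the shadow is behind or level in type-$1$ count it steps greedily towards its next type-$1$ node, which it reaches quickly enough — because a type-$1$ node is always at least three steps above the next type-$1$ node, whereas the type-$2$ or type-$3$ node $\widehat{T}$ is currently at is never that far from its next type-$1$ node — to restore or preserve invariant (ii). The number of type-$1$ children given to an annoying node, namely $3$ for type $2$ and $4$ for type $3$, is exactly what is needed so that every branch $\widehat{T}$ may take out of such a node can be matched with its own type-$1$ child for the shadow to enter. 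Given the coupling, a path of $\widehat{T}^t$ with $g+1$ type-$1$ nodes (witnessing $\ah(\widehat{T}^t)\ge g$) is shadowed by a path of $\overline{T_k^t}$ with at least $g+1$ of them, so $\ah(\overline{T_k^t})\ge g$.

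The delicate part, which I expect to be the main obstacle, is the inductive verification that the shadowing can always be continued without violating (i) or (ii). One has to go through each of the four type-transition rules, and through an annoying node, and check precisely how the capped, partly zero-length blocks of $\overline{T_k}$ compare — in birth time and in accumulated type-$1$ count — with the possibly much longer blocks of $\widehat{T}$, tracking how the two paths drift out of synchrony after a cut and confirming that they never do so in a way that puts the shadow behind in type-$1$ count; and one has to confirm that $3$ and $4$ type-$1$ children leave exactly enough room for whichever branch $\widehat{T}$ actually follows. Once this case analysis is carried out, the remainder of the argument is routine bookkeeping.
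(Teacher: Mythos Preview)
Your overall approach matches the paper's: a direct subtree coupling for the left inequality, and for the right a coupling carrying a map from nodes of $\widehat{T}$ to nodes of $\overline{T_k}$ that never decreases auxiliary depth and never increases birth time. You correctly isolate this second construction as the crux and explicitly flag it as unfinished.

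Where your sketch goes astray is in the heuristic you offer for that construction. The claim that ``the type-$2$ or type-$3$ node $\widehat{T}$ is currently at is never that far from its next type-$1$ node'' is false: from a type-$2$ node one can descend along $2\to 2\to 2\to\cdots$ indefinitely, so a one-step greedy shadow cannot rely on $\widehat{T}$ reaching a type-$1$ node soon. Nor do the counts $3$ and $4$ arise from matching the branches out of the annoying node---there are three such branches in either case, which does not explain the $4$. The paper instead writes down, via two explicit diagrams, a map specified at the level of the \emph{grandchildren} of the annoying node. The type-$1$ nodes appearing within two levels below the annoying node in $\widehat{T}$ (two of them when the annoying node has type $2$; three when it has type $3$, counting the type-$1$ child) are each sent to a distinct type-$1$ child of the annoying node in $\overline{T_k}$; one further type-$1$ child serves as a container into whose subtree all remaining grandchildren are embedded, the zero-labelled edges supplying extra auxiliary depth at no cost in birth time. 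This accounts exactly for $2+1=3$ and $3+1=4$. After this two-level patch every mapped grandchild has the same type as its image, so the coupling continues recursively and the two required properties follow from a purely local check. Your path-following picture does not reach this without the two-level look-ahead; that concrete bookkeeping is the piece you are missing.
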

\begin{proof}
The left inequality follows from the fact that the random edge labels of  $\widehat{T}$ and $\underline{T_{k}}$
can easily be coupled using a common sequence of independent $\operatorname{Exp}(1)$ random variables
in such a way that for every $t \geq 0$,
the generated $\underline{T_{k}^t}$ is always a subtree of the generated $\widehat{T}^t$.

For the right inequality, we use a sneaky coupling between the edge labels of  $\widehat{T}$ and $\overline{T_{k}}$.
It is enough to choose them using a common sequence of independent $\operatorname{Exp}(1)$ random variables $X_1,X_2,\ldots$
and define a one-to-one mapping $f : V(\widehat{T}) \rightarrow V\left(\overline{T_{k}}\right)$
such that for every $u \in V(\widehat{T})$,
\begin{enumerate}
\item[(1)] the auxiliary depth of $f(u)$ is greater than or equal to the auxiliary depth of $u$, and
\item[(2)] for some $I$ and $J \subseteq I$, the birth time of $u$ equals $\sum_{i\in I} X_i$
and the birth time of $f(u)$ equals $\sum_{j\in J} X_j$.
\end{enumerate}

For annoying nodes, the coupling and the mapping $f$ is shown down to their grandchildren in Figures~\ref{fig:type-2} and~\ref{fig:type-3}.
This is easily extended in a natural way to all other nodes of the tree.
\end{proof}

\begin{figure}
  \begin{center}
    \begin{tikzpicture}[level distance=2cm,level 1/.style={sibling distance = 3.75cm},level 2/.style={sibling distance = 1.3cm}]
\tikzset{
  treenode/.style = {circle,draw},
}
\tikzstyle{every label}=[draw=none,label distance=-3pt]
\def \la {-60};
\node [treenode](root){$2$}
child {node (a1)[treenode] {$2$}
  child {node (b1)[treenode,label={\la:$1$}] {$2$} edge from parent node [left]{$B$}
  }
  child {node (b2)[treenode,label={\la:$2$}] {$3$} edge from parent node [left]{$B$}
  }
  child {node (b3)[treenode,label={\la:$3$}] {$3$} edge from parent node [left]{$B$}
  }
  edge from parent node [above left] {$A$}
}
child {node (a2)[treenode] {$3$}
  child {node (c1)[treenode, label={\la:$4$}]{$1$}  edge from parent node [left] {$C$}
  }
  child {node (c2)[treenode,label={\la:$5$}] {$3$}  edge from parent node [left] {$C$}
  }
  child {node (c3)[treenode,label={\la:$6$}] {$3$}  edge from parent node [left] {$C$}
  }
  edge from parent node [left] {$A$}
}
child {node (a3)[treenode] {$3$}
  child {node (d1)[treenode,label={\la:$7$}]{$1$}  edge from parent node [left] {$D$}
  }
  child {node (d2)[treenode,label={\la:$8$}] {$3$}  edge from parent node [left] {$D$}
  }
  child {node (d3)[treenode,label={\la:$9$}] {$3$}  edge from parent node [left] {$D$}
  }
  edge from parent node [above right] {$A$}
};
\end{tikzpicture}

\vspace{2cm}

\begin{tikzpicture}[level distance=2cm,level 1/.style={sibling distance = 3.75cm},
level 2/.style={sibling distance = 3.75cm},
level 3/.style={sibling distance = 1.3cm}]
\tikzset{
  treenode/.style = {circle,draw},
}
\tikzstyle{every label}=[draw=none,label distance=-3pt]
\def \la {-60};

\node (root)[treenode]{$2$}
  child {node (a1)[treenode,label={\la:$4$}] {$1$}  edge from parent node [above left]{$0$}
  }
  child {node (a2) [treenode]{$1$}
    child {node (b1) [treenode]{$2$}
      child {node (c1)[treenode, label={\la:$1$}]{$2$}
        edge from parent node [left]{$B$}
      }
      child {node (c2)[treenode, label={\la:$2$}] {$3$}
       edge from parent node [left]{$B$}
      }
      child {node (c3)[treenode, label={\la:$3$}] {$3$}
        edge from parent node [left]{$B$}
      }
      edge from parent node [above left]{$A$}
    }
    child {node (b2) [treenode]{$2$}
      child {node (d1) [treenode]{$2$}
        edge from parent node [left]{$C$}
      }
      child {node (d2)[treenode, label={\la:$5$}] {$3$}
        edge from parent node [left]{$C$}
      }
      child {node (d3)[treenode, label={\la:$6$}] {$3$}
        edge from parent node [left]{$C$}
      }
      edge from parent node [left]{$A$}
    }
    child {node (b3) [treenode]{$2$}
      child {node (d1) [treenode]{$2$}
        edge from parent node [left]{$D$}
      }
      child {node (d2)[treenode, label={\la:$8$}] {$3$}
        edge from parent node [left]{$D$}
      }
      child {node (d3)[treenode, label={\la:$9$}] {$3$}
        edge from parent node [left]{$D$}
      }
      edge from parent node [above right]{$A$}
    }
    edge from parent node [left]{$0$}
  }
  child {node (a3)[treenode, label={\la:$7$}] {$1$}
    edge from parent node [above right]{$0$}
  };
  \end{tikzpicture}
\end{center}
\caption
{\label{fig:type-2}
Illustrating the coupling in Lemma~\ref{lem:sandwich} for an annoying node of type 2 in $\widehat{T}$.
The offspring of the node is shown above and the offspring of the corresponding node in $\overline{T_k}$ is shown below.
The type of each node is written inside the node.
The coupling of edge labels is defined by the appearance of $A,B,\dots$ in the two cases.
The label 0 is also used in the case of $\overline{T_k}$. The function $f$ is defined by the labels beside the nodes.}
\end{figure}

\begin{figure}
\begin{center}
\begin{tikzpicture}[level distance=2cm,level 1/.style={sibling distance = 3.75cm},
level 2/.style={sibling distance = 1.25cm}]
\tikzset{
  treenode/.style = {circle,draw},
}
\tikzstyle{every label}=[draw=none,label distance=-3pt]
\def \la {-60};
\node (root)[treenode]{$3$}
child {node (a1) [treenode]{$1$}
  child {node (b1)[treenode, label={\la:$1$}]{$2$}
    edge from parent node [left]{$F$}
  }
  child {node (b2)[treenode, label={\la:$2$}] {$2$}
    edge from parent node [left]{$F$}
  }
  child {node (b3)[treenode, label={\la:$3$}] {$2$}
    edge from parent node [left]{$F$}
  }
  edge from parent node [above left]{$E$}
}
child {node (a2) [treenode]{$3$}
  child {node (c1)[treenode,label={\la:$4$}]{$1$}
    edge from parent node [left]{$G$}
  }
  child {node (c2)[treenode,label={\la:$5$}] {$3$}
    edge from parent node [left]{$G$}
  }
  child {node (c3)[treenode,label={\la:$6$}] {$3$}
    edge from parent node [left]{$G$}
  }
  edge from parent node [left]{$E$}
}
child {node (a3)[treenode] {$3$}
  child {node (d1)[treenode, label={\la:$7$}]{$1$}
    edge from parent node [left]{$H$}
  }
  child {node (d2)[treenode, label={\la:$8$}] {$3$}
    edge from parent node [left]{$H$}
  }
  child {node (d3)[treenode, label={\la:$9$}] {$3$}
    edge from parent node [left]{$H$}
  }
  edge from parent node [above right]{$E$}
};
\end{tikzpicture}

\vspace{2cm}

\begin{tikzpicture}[level distance=2cm,level 1/.style={sibling distance = 3.75cm},
    level 3/.style={sibling distance = 1.25cm}]]
    \tikzset{
      treenode/.style = {circle,draw},
    }
    \tikzstyle{every label}=[draw=none,label distance=-3pt]
    \def \la {-60};
    \node (root)[treenode]{$3$}
    child {node (a1)[treenode]{$1$}
      [sibling distance=1.25cm]
      child {node (b1)[treenode, label={\la:$1$}]{$2$}
        edge from parent node [left]{$F$}
      }
      child {node (b2)[treenode, label={\la:$2$}] {$2$}
        edge from parent node [left]{$F$}
      }
      child {node (b3)[treenode, label={\la:$3$}] {$2$}
        edge from parent node [left]{$F$}
      }
      edge from parent node [above left]{$0$}
    }
    child {node (a2)[treenode, label={\la:$4$}]{$1$}
      edge from parent node [left]{$0$}
    }
    child {node (a3)[treenode]{$1$}
      [sibling distance=3.75cm]
      child {node (c1)[treenode]{$2$}
        child {node (d1)[treenode]{$2$}
          edge from parent node [left]{$G$}
        }
        child {node (d2)[treenode, label={\la:$5$}] {$3$}
          edge from parent node [left]{$G$}
        }
        child {node (d3)[treenode, label={\la:$6$}]{$3$}
          edge from parent node [left]{$G$}
        }
        edge from parent node [above left]{$E$}
      }
      child {node (c2) [treenode]{$2$}
        child {node (e1)[treenode]{$2$}
            edge from parent node [left]{$H$}
          }
          child {node (e2)[treenode, label={\la:$8$}] {$3$}
            edge from parent node [left]{$H$}
          }
          child {node (e3)[treenode, label={\la:$9$}]{$3$}
            edge from parent node [left]{$H$}
          }
          edge from parent node [left]{$E$}
        }
        child {node (c3) [treenode]{$2$}
          child {node (f1)[treenode]{$2$}
            edge from parent node [left]{$I$}
          }
          child {node (f2)[treenode]{$3$}
            edge from parent node [left]{$I$}
          }
          child {node (f3)[treenode]{$3$}
            edge from parent node [left]{$I$}
          }
          edge from parent node [above right]{$E$}
        }
        edge from parent node [left]{$0$}
      }
      child {node (a4)[treenode, label={\la:$7$}] {$1$}
        edge from parent node [above right]{$0$}};
    \end{tikzpicture}
\end{center}
\caption{ \label{fig:type-3}
The coupling in Lemma~\ref{lem:sandwich} for an annoying node of type 3 in $\widehat{T}$.
}
\end{figure}

 With a view to proving Lemma~\ref{lem:radius} by appealing to Lemmas~\ref{lem:equal_logs}~and~\ref{lem:sandwich},
we will define two sequences
$\left(\underline{\rho_k}\right)$
and
$\left(\overline{\rho_k}\right)$
such that for each $k$, a.a.s.\ the heights of $\overline{T_k^t}$ and $\underline{T_k^t}$
are asymptotic to $\overline{\rho_k} t $ and $\underline{\rho_k}t$, respectively,
and {also}
$$
\lim_{k\to\infty} \underline{\rho_k} = \lim_{k\to\infty} \overline{\rho_k} = c \:,
$$
where $c\approx 1.668$ is defined in the statement of Lemma~\ref{lem:radius}.

For the rest of this section, asymptotics are with respect to $t$ instead of $n$, unless otherwise specified.
We analyze the heights of $\underline{T_k}$ and $\overline{T_k}$
with the help of a theorem of Broutin and Devroye~\cite[Theorem~1]{treeheight}.
We state here a special case suitable for our purposes, including a trivial correction to the conditions on $E$.
\begin{theorem}
\label{thm:broutin-devroye}
Let $E$ be a prototype nonnegative random variable that
satisfies $\p{E=0} = 0$ and $\sup \{z : \p{E > z} =1 \} = 0$,
and such that $\p{E=z}<1$ for every $z\in\mathbb{R}$;
and for which there exists $\lambda>0$ such that $\e{\exp(\lambda E)}$ is finite.
Let $b$ be a fixed positive integer greater than {1} and let $T_{\infty}$ be an infinite $b$-ary tree.
Let $B$ be a prototype random $b$-vector with each component distributed as $E$  (but not necessarily independent components). For every node $u$ of $T_{\infty}$, label the edges to the children of $u$ using an independently generated copy of $B$.

Given $t\geq 0$,
let $H_t$ be the height of the subtree of $T_{\infty}$ induced by
the nodes for which the sum of the labels on their path to the root is at most $t$.
Then, a.a.s.\ we have
${H_t} \sim \rho t $,
where $\rho$ is the unique solution to
$$ \sup \{\lambda / \rho - \log (\e{\exp(\lambda E)}) : \lambda \leq 0 \} = \log b \:. $$
\end{theorem}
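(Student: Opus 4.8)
The plan is to recognize $H_t$ as the number of generations reachable ``by time $t$'' in a first-passage percolation process on $T_\infty$, and to reduce the statement to the asymptotics of the minimal displacement. Concretely, for $n\ge 0$ let $M_n$ be the minimum, over the $b^n$ nodes at depth $n$, of the sum of the edge labels on the path from that node to the root. Since edge labels are nonnegative, $H_t\ge n$ if and only if $M_n\le t$, i.e.\ $H_t=\max\{n:M_n\le t\}$. Thus it suffices to prove that a.a.s.\ $M_n\sim\gamma n$ for a constant $\gamma\in(0,\infty)$, and then to read off $\rho=1/\gamma$: substituting $\gamma=1/\rho$ into the variational formula in the statement turns it into $\sup\{\lambda\gamma-\log\e{\exp(\lambda E)}:\lambda\le 0\}=\log b$, which is exactly the equation that will identify $\gamma$ below.

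The constant $\gamma$ arises from a large-deviation balance between the $b^n$ nodes at depth $n$ and the probability that the label-sum along one of them is atypically small. The key structural observation is that, although the $b$ components of $B$ may be dependent, the labels along any single root-to-leaf path are independent --- consecutive labels come from independent copies of $B$ attached to distinct nodes --- and each is distributed as $E$; so for a fixed depth-$n$ node the path-sum $S_n$ is a sum of $n$ i.i.d.\ copies of $E$. Let $\Lambda(\lambda)=\log\e{\exp(\lambda E)}$, which is finite for all $\lambda\le 0$ because $E\ge 0$, and let $I(a)=\sup_{\lambda\le 0}\bigl(\lambda a-\Lambda(\lambda)\bigr)$ be the corresponding Cram\'er rate function, defined for $a\le\e{E}$. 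By Cram\'er's theorem $\p{S_n\le an}=\exp\bigl(-(I(a)+o(1))n\bigr)$, so $\e{N_n(a)}=\exp\bigl((\log b-I(a)+o(1))n\bigr)$, where $N_n(a)$ is the number of depth-$n$ nodes with $S_n\le an$. The exponential-moment hypothesis forces $0<\e{E}<\infty$, and the hypotheses $\p{E=0}=0$ and ``essential infimum of $E$ equals $0$'' make $I$ continuous and strictly decreasing on $(0,\e{E})$ with $I(a)\to+\infty$ as $a\to 0^+$ and $I(\e{E})=0$; hence there is a unique $\gamma\in(0,\e{E})$ with $I(\gamma)=\log b$, and we set $\rho=1/\gamma$.

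It then remains to establish two-sided concentration of $M_n$. For the lower bound $M_n>(\gamma-\delta)n$ a.a.s.\ (equivalently $H_t\le(\rho+\varepsilon)t$), fix $\delta>0$ and apply Markov's inequality: $\e{N_n(\gamma-\delta)}=\exp\bigl((\log b-I(\gamma-\delta)+o(1))n\bigr)\to 0$ because $I(\gamma-\delta)>I(\gamma)=\log b$, and since this failure probability decays exponentially in $n$ a union bound over all $n$ up to $O(t)$ finishes this direction; note it uses only $\Lambda(\lambda)<\infty$ for $\lambda\le 0$. For the upper bound $M_n\le(\gamma+\delta)n$ a.a.s.\ (equivalently $H_t\ge(\rho-\varepsilon)t$) one must show that a.a.s.\ some depth-$n$ node has $S_n\le(\gamma+\delta)n$. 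Here $\e{N_n(\gamma+\delta)}\to\infty$ since $I(\gamma+\delta)<\log b$, but a first moment alone is insufficient, so one runs the natural supercritical ``truncated'' branching process that retains only descendants whose partial label-sums stay below a line of slope $\gamma+\delta$, and shows it survives with probability bounded away from zero (and, after composing over many generations, a.a.s.) via a second-moment / Paley--Zygmund estimate, controlling the correlations between two near-minimal depth-$n$ nodes through their common ancestors. This is where the finite exponential moment $\e{\exp(\lambda E)}<\infty$ for some $\lambda>0$ and the nondegeneracy $\p{E=z}<1$ are used: they yield quantitative (exponentially small) bounds on the relevant failure probabilities, which gives the a.a.s.\ --- indeed, by Borel--Cantelli along a geometric sequence of values of $t$ together with monotonicity of $t\mapsto H_t$, the almost sure --- conclusion. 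Combining $M_n\sim\gamma n$ with $H_t=\max\{n:M_n\le t\}$ and the monotonicity of $H_t$ then yields $H_t\sim t/\gamma=\rho t$.

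I expect the main obstacle to be precisely the survival step in the upper bound on $M_n$: first moments only confirm the heuristic ``the expected count of near-minimal depth-$n$ nodes blows up,'' and one must rule out extinction of the truncated process, which needs a careful second-moment (or change-of-measure / barrier) estimate --- delicate because two near-minimal depth-$n$ nodes are correlated through their shared ancestors, and at the branching node also through the dependence among the components of $B$, and because the truncation slope must be tuned so as not to destroy supercriticality --- that is quantitative enough to survive the union bound over the relevant range of $t$. Everything else, namely the large-deviation identification of $\gamma$ and $\rho$, the Markov-inequality lower bound on $M_n$, and the passage from $M_n$ to $H_t$, is routine once this is in hand.
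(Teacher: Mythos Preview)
The paper does not prove this theorem. Theorem~\ref{thm:broutin-devroye} is quoted as a special case of \cite[Theorem~1]{treeheight} (Broutin and Devroye), and the paper simply invokes it as a black box; there is no proof in the paper to compare your proposal against.

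That said, your sketch is a reasonable outline of the standard branching-random-walk approach that underlies the Broutin--Devroye result (and, before it, Biggins' theorem on the minimal displacement). The reduction $H_t=\max\{n:M_n\le t\}$, the identification of $\gamma$ via the Cram\'er rate function and the equation $I(\gamma)=\log b$, the first-moment bound for the lower tail of $M_n$, and the second-moment/survival argument for the upper tail are exactly the ingredients one expects. You have also correctly flagged the one genuinely nontrivial step, namely the a.a.s.\ survival of the truncated process, and correctly noted where the exponential-moment and nondegeneracy hypotheses enter. If you were asked to \emph{prove} the theorem rather than to reproduce the paper's proof, your outline would be on the right track; but since the paper merely cites the result, the appropriate ``proof'' here is a one-line reference to \cite{treeheight}.
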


For each $i=2,3,\dots$, let $\alpha_i, \beta_i, \gamma_i$ denote the
number of nodes of type 1, 2, 3 at depth $i$ of $\widehat{T}$
for which the root is the only {node of type 1} in their path to the root.
Then rules  (a)--(d)  for determining node types imply
$$\forall i>2 \qquad \alpha_i = \gamma_{i-1}, \quad \beta_i = \beta_{i-1}, \quad \gamma_i = 2\beta_{i-1}+2\gamma_{i-1} \:.$$
These, together with $\alpha_2 = 0$, $\beta_2 = 3$, and $\gamma_2 = 6$, imply
\begin{equation}
\forall i \geq 2 \qquad \alpha_i = 3 \times 2^{i-1} - 6, \quad \beta_i = 3, \quad \gamma_i = 3 \times 2^i - 6 \:.\label{alpha's}
\end{equation}
Let $\underline{b_k}=\sum_{i=1}^k \alpha_i$
and $\overline{b_k}=\sum_{i=1}^k \alpha_i + 3\beta_k+4\gamma_k$.
For a positive integer $s$,
let $\operatorname{Gamma}(s)$ denote the Gamma distribution with mean $s$,
i.e., the distribution of the sum of $s$ independent $\operatorname{Exp}(1)$ random variables.

We define a random infinite tree $\underline{T_k}'$ as follows.
The nodes of $\underline{T_k}'$ are the type-1 nodes of $\underline{T_k}$.
Let $V'$ denote the set of these nodes.
For $u,v\in V'$ such that $u$ is the closest type-1 ancestor of $v$ in $\underline{T_k}$,
 there is an edge joining $u$ and $v$ in $\underline{T_k}'$, whose label
equals the sum of the labels of the edges in the unique $(u,v)$-path in $\underline{T_k}$.
By the construction, for all $t\geq 0$, the height of
the subtree of $\underline{T_k}'$ induced by nodes
 with birth time  less than or equal to $t$ equals the auxiliary height of $\underline{T^t_k}$.
Let $u$ be a node in $\underline{T_k}'$.
Then observe that  for each $i = 3, 4, \dots, k$,
$u$ has $\alpha_i$ children whose birth times equal the birth time of $u$
plus a $\operatorname{Gamma}(i)$ random variable.
In particular, $\underline{T_k}'$ is an infinite $\underline{b_k}$-ary tree.

To apply Theorem~\ref{thm:broutin-devroye} we need the label of each edge to have the same distribution.
For this, we  create a random rearrangement of  $\underline{T_k}'$. First
let $\underline{E_k}$ be the random variable such that for each $3\le i\le k$,
with probability $\alpha_i/\underline{b_k}$,
$\underline{E_k}$ is distributed as a $\operatorname{Gamma}(i)$ random variable.
Now, for each node $u$ of $\underline{T_k}'$, starting from the root and in a top-down manner,
randomly permute the branches below $u$.
This results in an infinite $\underline{b_k}$-ary tree, every edge of which has a random label distributed as  $\underline{E_k}$.
Although    the labels of edges from a node to its children are dependent,
the $\underline{b_k}$-vector of labels of edges from a node to its children is independent of all other edge labels,
as required for Theorem~\ref{thm:broutin-devroye}.
Let $\rho$ be the solution to
\begin{equation}
\label{eq:rhounder}
\sup \{\lambda / \rho - \log (\e{\exp(\lambda \underline{E_k})}) : \lambda \leq 0 \} = \log \underline{b_k} \:.
\end{equation}
Then by Theorem~\ref{thm:broutin-devroye}, a.a.s.\ the auxiliary height of $\underline{T^t_k}$,
which equals the height of the subtree of $\underline{T_k}'$ induced by nodes
 with birth time less than or equal to $t$, is asymptotic to $\rho t$.
Notice that we have
$$ \e{\exp(\lambda \: \operatorname{Exp}(1))} = \frac{1}{1-\lambda} \:.$$
So, by the definition of $\operatorname{Gamma}(s)$, and since the product of expectation
of independent variables equals the expectation of their product,
$$ \e{\exp(\lambda \: \operatorname{Gamma}(s))} = \frac{1}{(1-\lambda)^s} \:.$$
Hence by linearity of expectation,
\begin{equation}
\label{eq:genunder}
\e{\exp(\lambda \underline{E_k})}=
\sum_{i=3}^k  \frac{ \alpha_i}{\underline{b_k} (1-\lambda)^{i} }\:.
\end{equation}

One can define a  random infinite $\overline{b_k}$-ary tree $\overline{T_k}'$ in a similar way.
Let $\overline{E_k}$ be the random variable such that for each $3\le i\le k-1$,
with probability $\alpha_i/\overline{b_k}$,
it is distributed as a $\operatorname{Gamma}(i)$ random variable,
and with probability $(\alpha_k + 3\beta_k + 4\gamma_k) /  \overline{b_k}$,
it is distributed as a $\operatorname{Gamma}(k)$ random variable.
Then by a similar argument,
a.a.s.\ the auxiliary height of $\overline{T^t_k}$ is asymptotic to $\rho t$,
where $\rho$ is the solution to
\begin{equation}
\label{eq:rhoover}
\sup \{\lambda / \rho - \log (\e{\exp(\lambda \overline{E_k})}) : \lambda \leq 0 \} = \log \overline{b_k} \:.
\end{equation}
Moreover, one calculates
\begin{equation}
\label{eq:genover}
\e{\exp(\lambda \overline{E_k})}=
\frac{ \alpha_k + 3\beta_k + 4\gamma_k}{\overline{b_k}(1-\lambda)^{k}}
+\sum_{i=3}^{k-1} \frac{ \alpha_i}{\overline{b_k} (1-\lambda)^{ i} }\:.
\end{equation}

As part of our plan to prove Lemma~\ref{lem:radius},
we would like to define
$\underline{\rho_k}$ and $\overline{\rho_k}$
in such a way that
they are the unique solutions to~(\ref{eq:rhounder})~and~(\ref{eq:rhoover}), respectively.
We first need to establish two analytical lemmas.

For later convenience, we define $\mathcal F$ to be the set of positive functions $f : [0.1, 0.2] \rightarrow \mathbb{R}$
that are differentiable on $(0.1,0.2)$,
and let $W : \mathcal{F} \rightarrow \mathbb{R}^{[0.1,0.2]}$ be the operator defined as
$$Wf (x) = x (x-1) f'(x) / f(x) - \log f(x) \:.$$
Note that $Wf$ is continuous.
Define $h \in \mathcal{F}$ as
$$h(x) = \frac{12x^3}{1-2x} - \frac{6x^3}{1-x} \:.$$

\begin{lemma}
The function $Wh$ has a unique root $\hat x$ in $(0.1,0.2)$.
\end{lemma}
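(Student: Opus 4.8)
The plan is to establish existence of a root of $Wh$ in $(0.1,0.2)$ via the intermediate value theorem and uniqueness by showing $Wh$ is strictly decreasing there. The enabling first step is to put $h$, and hence $Wh$, into a tractable closed form.

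First I would combine the two fractions defining $h$: a one-line computation ($12x^3(1-x)-6x^3(1-2x)=6x^3$) gives $h(x)=\frac{6x^3}{(1-2x)(1-x)}$, which is positive and differentiable on $[0.1,0.2]$ since $x$, $1-2x$, $1-x$ are all positive there; in particular $h\in\mathcal F$. Taking logarithms, $\phi(x):=\log h(x)=\log 6+3\log x-\log(1-2x)-\log(1-x)$, so $\phi'(x)=\frac3x+\frac2{1-2x}+\frac1{1-x}$ and $\phi''(x)=-\frac3{x^2}+\frac4{(1-2x)^2}+\frac1{(1-x)^2}$. Since $h'/h=\phi'$, the definition of $W$ yields the clean form $Wh(x)=x(x-1)\phi'(x)-\phi(x)$.

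For existence I would evaluate this at the endpoints. At $x=0.1$: $x(x-1)=-0.09$, $\phi'(0.1)=33.61\ldots$, $\phi(0.1)=\log\frac1{120}=-4.78\ldots$, giving $Wh(0.1)\approx 1.76>0$; at $x=0.2$: $x(x-1)=-0.16$, $\phi'(0.2)=19.58\ldots$, $\phi(0.2)=\log\frac1{10}=-2.30\ldots$, giving $Wh(0.2)\approx -0.83<0$. As $Wh$ is continuous (noted right after the definition of $W$), the intermediate value theorem produces a root $\hat x\in(0.1,0.2)$, and one sees $\hat x\approx 0.163$.

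For uniqueness I would show $Wh$ is strictly monotone on $(0.1,0.2)$. Differentiating $Wh(x)=x(x-1)\phi'(x)-\phi(x)$ and collecting terms, $(Wh)'(x)=(2x-2)\phi'(x)+x(x-1)\phi''(x)=(x-1)\bigl(2\phi'(x)+x\phi''(x)\bigr)$. Substituting $\phi'$, $\phi''$ and simplifying the $(1-2x)$- and $(1-x)$-contributions separately, the bracket collapses to $2\phi'(x)+x\phi''(x)=\frac3x+\frac{4(1-x)}{(1-2x)^2}+\frac{2-x}{(1-x)^2}$, a sum of three strictly positive terms for every $x\in(0,1/2)$, in particular on $(0.1,0.2)$. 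Since $x-1<0$ there, $(Wh)'(x)<0$ throughout $(0.1,0.2)$, so $Wh$ is strictly decreasing and the root found above is the only one. The argument is entirely elementary; the only point needing care is the algebraic simplification of $2\phi'(x)+x\phi''(x)$ into a manifestly positive form, and I do not foresee a real obstacle. (If one wished to sidestep even that, note $Wh(x)=x-3+\frac{x}{2x-1}-\log h(x)$, whence $(Wh)'(x)=1-\frac1{(2x-1)^2}-\frac3x+\frac2{2x-1}-\frac1{1-x}$, where on $(0.1,0.2)$ every term after the leading $1$ is negative and $-\frac3x<-15$, so $(Wh)'(x)<-14<0$.)
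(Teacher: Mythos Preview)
Your proof is correct and follows essentially the same strategy as the paper's: both verify the sign change $Wh(0.1)>0>Wh(0.2)$ numerically and establish uniqueness by showing $Wh$ is strictly decreasing on $(0.1,0.2)$. The only cosmetic difference is that the paper shows the two summands $x(x-1)h'(x)/h(x)$ and $-\log h(x)$ are each decreasing separately (the former having derivative $4x(x-1)/(1-2x)^2$, the latter via the positive-coefficient series $h(x)=\sum_{i\ge 3}\alpha_i x^i$), whereas you compute $(Wh)'$ in one stroke from the closed form.
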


\begin{proof}
By the definition of $(\alpha_i)_{i\ge 3}$ in (\ref{alpha's}) we have
$$
h(x)=\sum_{i\ge 3}\alpha_i x^i \qquad \forall x \in [0.1,0.2] \:.
$$
Since $\alpha_i>0$ for all $i\ge 3$, we have $h(x)>0$ and $h'(x)>0$ for $x \in [0.1,0.2]$, and hence  the derivative of $\log h(x)$ is positive.
Moreover, the derivative of $x(x-1)h'(x)/h(x)$ equals $4x(x-1)/(1-2x)^2$, which is negative.
Therefore, $Wh(x)$ is a strictly decreasing function on $[0.1,0.2]$.
Numerical calculations give
$Wh (0.1) \approx 1.762 > 0$
and
$Wh(0.2) \approx -0.831 < 0$.
Hence, there is a unique solution to $Wh(x)=0$ in $(0.1,0.2)$.
\end{proof}

\begin{remark}
Numerical calculations give $\hat x \approx  0.1629562 \:.$
\end{remark}

Define functions $\underline{g_k}, \overline{g_k} \in \mathcal{F}$ as
$$ \underline{g_k}(x)=\sum_{i=3}^k \alpha_i x^i, \mathrm{\ and\ }\overline{g_k}(x)=(\alpha_k+3\beta_k+4\gamma_k)x^k + \sum_{i=3}^{k-1} \alpha_i x^i\:.$$
Note that by (\ref{eq:genunder}) and (\ref{eq:genover}),
\begin{equation}
\label{eq:gs}
\underline{b_k} \: \e{\exp\left(\lambda \underline{E_k}\right)} = \underline{g_k}\left(\frac{1}{1-\lambda}\right)
, \mathrm{\ and\ }
\overline{b_k} \: \e{\exp\left(\lambda \overline{E_k}\right)} = \overline{g_k}\left(\frac{1}{1-\lambda}\right)
\end{equation}
hold at least when $(1-\lambda)^{-1} \in [0.1,0.2]$,
namely for all $\lambda \in [-9, -4]$.

\begin{lemma}
\label{lem:limits}
Both sequences $\left(W \underline{g_k}\right)_{k=3}^{\infty}$
and $\left(W \overline{g_k}\right)_{k=3}^{\infty}$
converge pointwise to $Wh$ on $[0.1,0.2]$ as $k\rightarrow \infty$.
Also, there exists a positive integer $k_0$ and
sequences $\left(\underline{x_k}\right)_{k=k_0}^{\infty}$ and $\left(\overline{x_k}\right)_{k=k_0}^{\infty}$
such that $W\underline{g_k}\left(\underline{x_k}\right) = W\overline{g_k}\left(\overline{x_k}\right) = 0$
for all $k \geq k_0$, and
$$\lim_{k\rightarrow \infty} \underline{x_k} = \lim_{k\rightarrow \infty} \overline{x_k} = \hat x \:.$$
\end{lemma}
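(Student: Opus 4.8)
The plan is to obtain pointwise convergence of $W\underline{g_k}$ and $W\overline{g_k}$ to $Wh$ directly from the definition of $W$, and then deduce convergence of the roots using the monotonicity already established for $Wh$. First I would observe that on the compact interval $[0.1,0.2]$ we have $|x| \le 0.2$, so the tail $\sum_{i>k} \alpha_i x^i$ is dominated by $\sum_{i>k} 3\cdot 2^{i-1}(0.2)^i$, a convergent geometric-type series; hence $\underline{g_k}(x) \to h(x)$ uniformly on $[0.1,0.2]$. Differentiating term by term, $\underline{g_k}'(x) = \sum_{i=3}^k i\alpha_i x^{i-1}$ likewise converges uniformly to $h'(x)$ on any slightly smaller interval (and in particular pointwise on $(0.1,0.2)$), since $\sum i\alpha_i (0.2+\epsilon)^{i-1}$ still converges for small $\epsilon$. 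Since $h(x)>0$ on $[0.1,0.2]$, for $k$ large $\underline{g_k}(x)>0$ too, so $W\underline{g_k}(x) = x(x-1)\underline{g_k}'(x)/\underline{g_k}(x) - \log\underline{g_k}(x)$ is well-defined and converges pointwise to $x(x-1)h'(x)/h(x) - \log h(x) = Wh(x)$, using continuity of the arithmetic operations and of $\log$ away from $0$.

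For $\overline{g_k}$ the only difference is the replacement of the coefficient $\alpha_k$ by $\alpha_k + 3\beta_k + 4\gamma_k$ in the degree-$k$ term. By~(\ref{alpha's}), $\beta_k = 3$ and $\gamma_k = 3\cdot 2^k - 6$, so $3\beta_k + 4\gamma_k = O(2^k)$, and hence $(3\beta_k+4\gamma_k)x^k = O\big((2x)^k\big) = O\big((0.4)^k\big) \to 0$ uniformly on $[0.1,0.2]$; the same bound applies to its derivative $k(3\beta_k+4\gamma_k)x^{k-1}$. Therefore $\overline{g_k} = \underline{g_k} + o(1)$ and $\overline{g_k}' = \underline{g_k}' + o(1)$ uniformly on $[0.1,0.2]$, so $W\overline{g_k} \to Wh$ pointwise by the same computation.

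To get the roots, I would invoke the following standard fact: if $\phi_k \to \phi$ pointwise on an interval, $\phi$ is continuous and strictly decreasing, and $\phi$ has a root $\hat x$ in the open interval, then for every small $\delta>0$ we have $\phi(\hat x - \delta) > 0 > \phi(\hat x + \delta)$, so for $k$ large enough $\phi_k(\hat x-\delta)>0$ and $\phi_k(\hat x+\delta)<0$, whence by the intermediate value theorem $\phi_k$ has a root in $(\hat x - \delta, \hat x + \delta)$. (Here $W\underline{g_k}$ and $W\overline{g_k}$ are continuous, being built from the smooth functions $\underline{g_k}, \overline{g_k}$ on the set where these are positive, which for large $k$ is all of $[0.1,0.2]$; we do not need them to be monotone — only that $Wh$ is, which was proved in the preceding lemma.) Applying this with $\phi_k = W\underline{g_k}$ and then $\phi_k = W\overline{g_k}$, and letting $\delta \to 0$, produces $k_0$ and the sequences $\underline{x_k}, \overline{x_k}$ with $\underline{x_k}, \overline{x_k} \to \hat x$.

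The main obstacle is bookkeeping rather than conceptual: one must make sure the term-by-term differentiation is legitimate (handled by noting the radius of convergence of $\sum \alpha_i x^i$ strictly exceeds $0.2$, so uniform convergence of the differentiated series holds on $[0.1,0.2]$), and that the denominators $\underline{g_k}(x), \overline{g_k}(x)$ stay bounded away from $0$ uniformly for large $k$ (which follows from $\underline{g_k} \to h$ uniformly and $\min_{[0.1,0.2]} h > 0$). Given these, everything reduces to continuity and the intermediate value theorem.
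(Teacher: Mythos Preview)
Your proof is correct and follows essentially the same route as the paper: establish $\underline{g_k},\underline{g_k}',\overline{g_k},\overline{g_k}'\to h,h'$ pointwise on $[0.1,0.2]$ (you add explicit tail estimates that the paper omits), deduce $W\underline{g_k},W\overline{g_k}\to Wh$, and then use the strict monotonicity of $Wh$ together with the intermediate value theorem to locate roots near $\hat x$. The only cosmetic difference is that the paper explicitly selects $\underline{x_k}$ as the root of $W\underline{g_k}$ closest to $\hat x$, which makes the ``$\delta\to 0$'' step immediate; you should do the same (or otherwise specify the choice of root) so that the single sequence $(\underline{x_k})$ is well-defined and guaranteed to land in each $(\hat x-\delta,\hat x+\delta)$ once $k$ is large.
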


\begin{proof}
For any $x \in [0.1,0.2]$, we have
$$\lim_{k\rightarrow \infty} \underline{g_k}(x) = h(x), \quad \lim_{k\rightarrow \infty} \underline{g_k}'(x) = h'(x),  \quad\lim_{k\rightarrow \infty} \overline{g_k}(x) = h(x), \quad \lim_{k\rightarrow \infty} \overline{g_k}'(x) = h'(x) \:,$$
so the sequences
$\left(W \underline{g_k}\right)_{k=3}^{\infty}$
and
$\left(W \overline{g_k}\right)_{k=3}^{\infty}$
converge pointwise to $Wh$.

Next, we show the existence of a positive integer $\underline{k_0}$
and a sequence $\left(\underline{x_k}\right)_{k=k_0}^{\infty}$
such that $W\underline{g_k}\left(\underline{x_k}\right) = 0$
for all $k \geq k_0$, and
$$\lim_{k\rightarrow \infty} \underline{x_k} = \hat x \:.$$
The proof for existence of corresponding positive integer $\overline{k_0}$
and the sequence $\left(\overline{x_k}\right)_{k=k_0}^{\infty}$
is similar, and we may let $k_0 = \max \{\underline{k_0}, \overline{k_0}\}$.

Since $Wh (0.1) > 0$ and $Wh (0.2) < 0$, there exists $\underline{k_0}$ so that for $k \geq \underline{k_0}$,
$W\underline{g_k} (0.1) > 0$ and $W\underline{g_k} (0.2) < 0$.
Since $W\underline{g_k}$ is continuous for all $k\geq 3$,
it has at least one root in $(0.1,0.2)$.
Moreover, since $W\underline{g_k}$ is continuous, the set
$\{x : W\underline{g_k} (x) = 0\}$ is a closed set,
thus we can choose a  root $\underline{x_k}$ closest to $\hat x$.
We just need to show that $\lim_{k\rightarrow \infty} \underline{x_k} = \hat x$.
Fix an $\varepsilon>0$.
Since $Wh\left(\hat x - \varepsilon\right) >0 $ and $Wh\left(\hat x + \varepsilon\right)<0$,
there exists a large enough $M$ such that for all $k\geq M$,
$W\underline{g_k}(\hat x - \varepsilon) >0 $ and $W\underline{g_k}(\hat x + \varepsilon)<0$. Thus $\underline{x_k} \in (\hat x - \varepsilon, \hat x + \varepsilon)$.
Since $\varepsilon$ was arbitrary, we conclude that
$\lim_{k\rightarrow \infty} \underline{x_k} = \hat x$.
\end{proof}

Let $k_0$ be as in Lemma~\ref{lem:limits} and
let
$\left(\underline{x_k}\right)_{k=k_0}^{\infty}$ and $\left(\overline{x_k}\right)_{k=k_0}^{\infty}$
be the sequences given by Lemma~\ref{lem:limits}.
Define the sequences
$\left(\underline{\rho_k}\right)_{k=k_0}^{\infty}$ and $\left(\overline{\rho_k}\right)_{k=k_0}^{\infty}$
by
\begin{equation}\label{eq:rho}
\underline{\rho_k}=\left(1-\underline{x_k}^{-1}\right)/\log \underline{g_k}(\underline{x_k}),\quad
\overline{\rho_k}=\left(1-{\overline{x_k}\:}^{-1}\right)/\log \overline{g_k}(\overline{x_k}).
\end{equation}

\begin{lemma}
\label{lem:long}
For every fixed $k\geq k_0$,
a.a.s.\ the heights of $\overline{T_k^t}$ and $\underline{T_k^t}$
are asymptotic to $\overline{\rho_k} t $ and $\underline{\rho_k}t$, respectively.
\end{lemma}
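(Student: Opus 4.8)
The plan is to observe that, with the machinery developed in the paragraphs preceding this lemma, what remains is essentially an analytic identification of two constants. Indeed, applying Theorem~\ref{thm:broutin-devroye} to the rearranged trees $\underline{T_k}'$ and $\overline{T_k}'$ already shows that a.a.s.\ $\ah(\underline{T^t_k})\sim\rho t$ and $\ah(\overline{T^t_k})\sim\rho't$, where $\rho$ and $\rho'$ are the unique solutions of~(\ref{eq:rhounder}) and~(\ref{eq:rhoover}) respectively; the hypotheses of Theorem~\ref{thm:broutin-devroye} hold because each of $\underline{E_k},\overline{E_k}$ is a finite mixture of $\operatorname{Gamma}(i)$ variables with $i\ge 3$, hence is supported on $(0,\infty)$, puts no mass at $0$, is not almost surely constant, and has finite moment generating function for $\lambda<1$ by~(\ref{eq:genunder}) and~(\ref{eq:genover}); moreover $\underline{b_k}\ge\alpha_3=6>1$ and $\overline{b_k}\ge\underline{b_k}$, and the independence of the label vectors across nodes was arranged in the construction of $\underline{T_k}'$ and $\overline{T_k}'$. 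So the lemma reduces to proving $\rho=\underline{\rho_k}$ and $\rho'=\overline{\rho_k}$. I would carry this out for $\underline{T_k}$, the case of $\overline{T_k}$ being verbatim the same with $\underline{g_k},\underline{b_k},\underline{E_k},\underline{x_k},\underline{\rho_k}$ replaced throughout by $\overline{g_k},\overline{b_k},\overline{E_k},\overline{x_k},\overline{\rho_k}$.

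First I would change variables in~(\ref{eq:rhounder}) by putting $x=(1-\lambda)^{-1}$, so that $\lambda=(x-1)/x$ and the range $\lambda\le 0$ corresponds to $x\in(0,1]$. By~(\ref{eq:genunder}) and the definition of $\underline{g_k}$ one has $\e{\exp(\lambda\underline{E_k})}=\underline{g_k}(x)/\underline{b_k}$ for every $\lambda<1$, so the left side of~(\ref{eq:rhounder}) equals $\log\underline{b_k}+\sup_{x\in(0,1]}\phi(x)$, where $\phi(x)=(x-1)/(x\rho)-\log\underline{g_k}(x)$; hence~(\ref{eq:rhounder}) is equivalent to $\sup_{x\in(0,1]}\phi(x)=0$. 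Next I would substitute the candidate value $\rho=\underline{\rho_k}=(1-\underline{x_k}^{-1})/\log\underline{g_k}(\underline{x_k})$ from~(\ref{eq:rho}). Since $k\ge k_0$, Lemma~\ref{lem:limits} supplies $\underline{x_k}\in(0.1,0.2)$ with $W\underline{g_k}(\underline{x_k})=0$, that is, $\underline{x_k}(\underline{x_k}-1)\underline{g_k}'(\underline{x_k})/\underline{g_k}(\underline{x_k})=\log\underline{g_k}(\underline{x_k})$. Substituting this identity and the formula for $\underline{\rho_k}$ into $\phi$ and into $\phi'(x)=1/(\rho x^2)-\underline{g_k}'(x)/\underline{g_k}(x)$ gives, after a short computation, $\phi(\underline{x_k})=0$ and $\phi'(\underline{x_k})=0$.

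The decisive step is to upgrade ``$\underline{x_k}$ is a critical point of value $0$'' to ``$\underline{x_k}$ is the global maximizer of $\phi$ on $(0,1]$'', which yields $\sup_{x\in(0,1]}\phi(x)=\phi(\underline{x_k})=0$ and hence, by the uniqueness clause of Theorem~\ref{thm:broutin-devroye}, $\rho=\underline{\rho_k}$. For this I would note that $\phi$ is continuous on $(0,1]$, that $\phi(x)\to-\infty$ as $x\to 0^+$ (here $-\log\underline{g_k}(x)=-3\log x+O(1)$, which is overwhelmed by the $-1/(\rho x)$ term), that $\phi(1)=-\log\underline{b_k}<0$, and that $\phi'(x)=0$ holds precisely when $x^2\underline{g_k}'(x)/\underline{g_k}(x)=1/\rho$. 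The left-hand side of this last equation equals $x\cdot(\sum_i i\alpha_i x^i)/(\sum_i\alpha_i x^i)$ and is strictly increasing in $x$ on $(0,\infty)$, since the quotient is the mean of a random index whose law moves up in the likelihood-ratio order as $x$ grows, and multiplying by the increasing positive factor $x$ preserves monotonicity; hence $\phi$ has at most one critical point in $(0,1)$. Combined with the boundary behaviour, $\underline{x_k}$ is therefore that unique critical point and the global maximum of $\phi$, of value $0$. Running the identical computation with $\overline{g_k}$ — still a polynomial with strictly positive coefficients, so $\overline{b_k}=\overline{g_k}(1)>1$ and $x\mapsto x^2\overline{g_k}'(x)/\overline{g_k}(x)$ is again strictly increasing — gives $\rho'=\overline{\rho_k}$, finishing the proof. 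I expect this global-maximality step to be the main obstacle: the critical-point identities are purely mechanical, but it is only the monotonicity of $x\mapsto x^2\underline{g_k}'(x)/\underline{g_k}(x)$ together with the endpoint behaviour of $\phi$ that actually pins down the value of the supremum in~(\ref{eq:rhounder}), and hence the constant $\rho$.
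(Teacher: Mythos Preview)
Your proof is correct and follows essentially the same route as the paper: verify that the candidate $\underline{\rho_k}$ (resp.\ $\overline{\rho_k}$) makes the objective attain the value $\log\underline{b_k}$ at a critical point, and then argue that this critical point is the global maximum. The only cosmetic difference is that the paper works directly in the $\lambda$ variable and obtains uniqueness of the maximizer from strict convexity of the cumulant generating function via H\"older's inequality, whereas you change variables to $x=(1-\lambda)^{-1}$ and deduce the same uniqueness from monotonicity of $x\mapsto x^2\underline{g_k}'(x)/\underline{g_k}(x)$; under the substitution these two statements are equivalent, since $\Lambda'(\lambda)=x^2\underline{g_k}'(x)/\underline{g_k}(x)$.
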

\begin{proof}

We give the argument for $\overline{T_k^t}$;
the argument for $\underline{T_k^t}$ is similar.
First of all, we claim that
$\log (\e{\exp(\lambda \overline{E_k})})$
is a strictly convex function of $\lambda$ over $(-\infty,0]$.
To see this, let $\lambda_1 < \lambda_2 \leq 0$
and let $\theta \in (0,1)$.
Then we have
\begin{align*}
\e{\exp\left(\theta \lambda_1 \overline{E_k} + (1-\theta) \lambda_2 \overline{E_k} \right)}
& = \e{\left[\exp\left(\lambda_1 \overline{E_k}\right)\right]^{\theta}
\left[\exp\left( \lambda_2 \overline{E_k} \right)\right]^{1-\theta}} \\
& <
\e{\exp\left(\lambda_1 \overline{E_k}\right)}^{\theta}
\e{\exp\left(\lambda_2 \overline{E_k}\right)}^{1-\theta} \:,
\end{align*}
where the inequality follows from H\"{o}lder's inequality,
and is strict as the random variable $\overline{E_k}$ does not have all of its mass concentrated in a single point.
Taking logarithms completes the proof of the claim.

It follows that given any value of $\rho$, $\lambda / \rho - \log (\e{\exp(\lambda \overline{E_k})})$
is a strictly concave function of $\lambda \in (-\infty,0]$
and hence attains it supremum at a unique $\lambda \le 0$.

Now, define
$$\overline{\lambda_k} = 1 - {\overline{x_k}\:}^{-1} \:,$$
which is in $(-9,-4)$ as $\overline{x_k} \in (0.1, 0.2)$.
Next we will show that
\begin{align}
\overline{\lambda_k} / \overline{\rho_k} - \log (\e{\exp(\overline{\lambda_k} \,\overline{E_k})}) & = \log \overline{b_k}
\label{eq:rho1}
\:, \\
\frac{\mathrm{d}}{\mathrm{d}{\lambda}}
\left[\lambda /  \overline{\rho_k} - \log (\e{\exp(\lambda \overline{E_k})})\right] \Big|_{\lambda =  \overline{\lambda_k}} & = 0
\label{eq:rho2}
\:,
\end{align}
which implies that $\overline{\rho_k}$ is the unique solution for
(\ref{eq:rhoover}),
and thus by Theorem~\ref{thm:broutin-devroye} and the discussion after it,
the height of $\overline{T_k^t}$ is asymptotic to $\overline{\rho_k} t$.

Notice that $\overline{\lambda_k} \in (-9,-4)$, so by (\ref{eq:gs}),
$$\overline{b_k} \e{\exp({\lambda} \overline{E_k})} = \overline{g_k}((1-\lambda)^{-1})$$
for $\lambda$ in a sufficiently small open neighbourhood of $\overline{\lambda_k}$.
Taking logarithm of   both sides and using~(\ref{eq:rho}) gives~(\ref{eq:rho1}).

To prove (\ref{eq:rho2}), note that
\begin{align*}
\frac{\mathrm{d}}{\mathrm{d}{\lambda}} \left[\log (\e{\exp(\lambda \overline{E_k})})\right] \Big|_{\lambda = \overline{\lambda_k}} & =
\frac{\mathrm{d}}{\mathrm{d}{\lambda}} \left[\log  \overline{g_k} \left(\left(1-\overline{\lambda_k}\right)^{-1}\right) - \log  \overline{b_k} \right]\Big|_{\lambda = \overline{\lambda_k}} \\
& = \frac {  {\overline{g_k}\,}' \left( (1-\overline{\lambda_k})^{-1} \right)} {\left(1-\overline{\lambda_k}\right)^{2} \: \overline{g_k} \left( (1-\overline{\lambda_k})^{-1} \right)} = {\overline{x_k}\:}^2 \: \frac{{\overline{g_k}\,}'(\overline{x_k})}{\overline{g_k}(\overline{x_k})} \:.
\end{align*}
By Lemma~\ref{lem:limits}, $W\overline{g_k} (\overline{x_k}) = 0$, i.e.,
$${\overline{x_k}\:}^2 \: \frac{{\overline{g_k}\,}'(\overline{x_k})}{{\overline{g_k}}(\overline{x_k})} =
{\overline{x_k}\:}^2 \: \frac{\log \overline{g_k}( \overline{x_k} )}{\overline{x_k} (\overline{x_k} - 1)}
= \frac{\log \overline{g_k}(\overline{x_k})}{1 - {\overline{x_k}\:}^{-1}} = \frac{1}{\overline{\rho_k}}\:,$$
and (\ref{eq:rho2}) is proved.
\end{proof}

We now have all the ingredients to prove Lemma~\ref{lem:radius}.

\begin{proof}
[Proof of Lemma~\ref{lem:radius}.]
By Lemma~\ref{lem:equal_logs}, we just need to show that
a.a.s.\ the auxiliary height of $\widehat{T}^t$ is asymptotic to $c t$, where
$$c = \frac{1-\hat {x}}{\log h(\hat x)} \:.$$
By Lemma~\ref{lem:long},
a.a.s.\ the heights of $\overline{T_k^t}$ and $\underline{T_k^t}$
are asymptotic to $\overline{\rho_k} t$ and $\underline{\rho_k} t$, respectively.
By Lemma~\ref{lem:limits}, $\overline{x_k}\to \hat x$ and $\underline{x_k}\to \hat x$.
Observe that $\left(\underline{g_k}\right)_{k=3}^{\infty}$ and $\left(\overline{g_k}\right)_{k=3}^{\infty}$
converge pointwise to $h$, and that
for every $k\geq 3$ and every $x\in[0.1,0.2]$,
$\underline{g_k}(x) \leq \underline{g_{k+1}}(x)$
and
$\overline{g_k}(x) \geq \overline{g_{k+1}}(x)$.
Thus by Dini's theorem (see, e.g., Rudin~\cite[Theorem~7.13]{rudin}),
$\left(\underline{g_k}\right)_{k=3}^{\infty}$ and $\left(\overline{g_k}\right)_{k=3}^{\infty}$
converge uniformly to $h$ on $[0.1,0.2]$.

Hence,
$$
\lim_{k\to\infty}\underline{\rho_k}=
\lim_{k\to\infty} \frac{1-\underline{x_k}^{-1}}{\log \underline{g_k}(\underline{x_k})} =
\frac{1-\hat {x}^{-1}}{\log h(\hat x)} = c \:,
$$
and
$$
\lim_{k\to\infty} \overline{\rho_k}=
\lim_{k\to\infty} \frac{1-{\overline{x_k}}^{-1}}{\log \overline{g_k}(\overline{x_k})} =
\frac{1-\hat {x} ^{-1}}{\log h(\hat x)} = c \:.
$$
It follows from Lemma~\ref{lem:sandwich} that
a.a.s.\ the auxiliary height of $\widehat{T}^t$ is asymptotic to $c t$, as required.
\end{proof}

\bibliographystyle{pagenduns}
\bibliography{rans}

\begin{thebibliography}{10}

\bibitem{web_survey}
Anthony Bonato.
\newblock A survey of models of the web graph.
\newblock In {\em Combinatorial and algorithmic aspects of networking}, volume
  3405 of {\em Lecture Notes in Comput. Sci.}, pages 159--172. Springer,
  Berlin, 2005.

\bibitem{complexgraphs}
Fan Chung and Linyuan Lu.
\newblock {\em Complex graphs and networks}, volume 107 of {\em CBMS Regional
  Conference Series in Mathematics}.
\newblock Published for the Conference Board of the Mathematical Sciences,
  Washington, DC, 2006.

\bibitem{ANs_1}
Jos\'e~S. Andrade, Hans~J. Herrmann, Roberto F.~S. Andrade, and Luciano~R.
  da~Silva.
\newblock Apollonian networks: Simultaneously scale-free, small world,
  euclidean, space filling, and with matching graphs.
\newblock {\em Phys. Rev. Lett.} 94 (2005), 018702.

\bibitem{ANs_2}
Jonathan P.~K. Doye and Claire~P. Massen.
\newblock Self-similar disk packings as model spatial scale-free networks.
\newblock {\em Phys. Rev. E (3)} 71 (2005), 016128, 12.

\bibitem{define_RANs}
Tao Zhou, Gang Yan, and Bing-Hong Wang.
\newblock Maximal planar networks with large clustering coefficient and
  power-law degree distribution.
\newblock {\em Phys. Rev. E} 71 (2005), 046141.

\bibitem{high_RANs}
Zhongzhi Zhang, Francesc Comellas, Guillaume Fertin, and Lili Rong.
\newblock High-dimensional {A}pollonian networks.
\newblock {\em J. Phys. A} 39 (2006), 1811--1818.

\bibitem{RANs_powerlaw}
Muhittin Mungan.
\newblock Comment on ``apollonian networks: Simultaneously scale-free, small
  world, euclidean, space filling, and with matching graphs''.
\newblock {\em Phys. Rev. Lett.} 106 (2011), 029802.

\bibitem{RANs_average_distance}
Marie Albenque and Jean-Fran{\c{c}}ois Marckert.
\newblock Some families of increasing planar maps.
\newblock {\em Electron. J. Probab.} 13 (2008), no. 56, 1624--1671.

\bibitem{first}
Alan Frieze and Charalampos~E. Tsourakakis.
\newblock On certain properties of random apollonian networks.
\newblock In Anthony Bonato and Jeannette Janssen, editors, {\em Algorithms and
  Models for the Web Graph}, volume 7323 of {\em Lecture Notes in Computer
  Science}, pages 93--112. Springer Berlin Heidelberg, 2012.

\bibitem{treeheight}
Nicolas Broutin and Luc Devroye.
\newblock Large deviations for the weighted height of an extended class of
  trees.
\newblock {\em Algorithmica} 46 (2006), 271--297.

\bibitem{polya}
Florian Eggenberger and George P\'{o}lya.
\newblock \"{U}ber die statistik verketteter vorg\"{a}nge.
\newblock {\em ZAMM - Journal of Applied Mathematics and Mechanics /
  Zeitschrift f\"{u}r Angewandte Mathematik und Mechanik} 3 (1923), 279--289.

\bibitem{urns}
Hosam~M. Mahmoud.
\newblock P\'{o}lya urn models and connections to random trees: A review.
\newblock {\em Journal of the Iranian Statistical Society} 2 (2003), 53--114.

\bibitem{random_trees}
Michael Drmota.
\newblock {\em Random trees}.
\newblock SpringerWienNewYork, Vienna, 2009.
\newblock An interplay between combinatorics and probability.

\bibitem{saturation}
Brigitte Chauvin and Michael Drmota.
\newblock The random multisection problem, travelling waves and the
  distribution of the height of {$m$}-ary search trees.
\newblock {\em Algorithmica} 46 (2006), 299--327.

\bibitem{ref.Billingsley}
Patrick Billingsley.
\newblock {\em Probability and measure}.
\newblock Wiley Series in Probability and Mathematical Statistics. John Wiley
  \& Sons Inc., New York, third edition, 1995.
\newblock A Wiley-Interscience Publication.

\bibitem{rudin}
Walter Rudin.
\newblock {\em Principles of mathematical analysis}.
\newblock McGraw-Hill Book Co., New York, third edition, 1976.
\newblock International Series in Pure and Applied Mathematics.

\end{thebibliography}

\end{document}